\documentclass[11p]{article}
\usepackage{mathtools,amssymb,amsmath,amsfonts,mathrsfs,textcomp,amsthm,latexsym}
\title{Rainbow  Hamiltonicity in Random $k$-Uniform Hypergraphs}
\author{}
\usepackage{tikz}
\usepackage{geometry}
\usepackage{cite}
\newtheorem{lemma}{Lemma}[section]
\newtheorem{theorem}{Theorem}[section]
\newtheorem{definition}{Definition}[section]
\newtheorem{remark}{Remark}[section]
\newtheorem{corollary}[theorem]{Corollary}
\usepackage{authblk} 
\usepackage{hyperref} 
\allowdisplaybreaks
\begin{document}
	
	\begin{center}
		{\bf \Large Rainbow Berge  Hamiltonicity in edge-colored random $k$-uniform hypergraphs \footnote{2020 Mathematics Subject Classification. Primary 05C65,05C80; Secondary 05C45, 05C07}
			\footnote{This work was supported by National Natural Science Foundation of China (Grant No.11401102,11671088).}}
	\end{center}

	\begin{center}Liping Zhang \textsuperscript{1}
		\footnote{Email: 1519036064@qq.com (Liping Zhang)}  \ \
		Ailian Chen \textsuperscript{2}  \footnote{Email:  elian1425@fzu.edu.cn (Ailian Chen)}\, \footnote{Corresponding author: Ailian Chen}\\
		
		\vspace{3pt}
		\textsuperscript{1} Center for Discrete Mathematics,
		Fuzhou University, Fujian, 350108, P. R China
		
		\textsuperscript{2} School of Mathematics and Statistics,
		Fuzhou University, Fujian, 350108, P. R China	
		
	\end{center}

	\begin{abstract}
		Let $H \sim H^{k}_c(n,p)$ be an edge-colored random $k$-uniform hypergraph on the vertex set $[n]$, where each edge $e \in \binom{[n]}{k}$ is included independently with probability $p$ and is uniformly and independently assigned a color from the color set $[c]$.
		For $k = 2$, Ferber and Krivelevich (2016) established that if $c = (1+o(1))n$ and $p = (\log n + \log \log n + \omega(n))/n$, then with high probability the edge-colored random graph $H \sim H^2_c(n,p)$ contains a rainbow Hamilton Berge cycle. Subsequently, Bal, Berkowitz, Devlin, and Schacht (2021) determined the threshold for the appearance of a (non-rainbow) Hamilton Berge cycle in random $k$-uniform hypergraphs.
		In this paper, we generalize the results to all integers $k \ge 3$. We prove that if $c = (1+o(1))n$ and $p = (k-1)! \frac{\log n + \log\log n + \omega(n)}{n^{k-1}}$, then with high probability $H \sim H^{k}_c(n,p)$ contains a rainbow Hamilton Berge cycle. The order of both parameters is asymptotically best possible:
		at least \(n\) colours are necessary for a rainbow Hamilton Berge cycle, and
		the probability threshold  equivalently for Hamilton Berge cycles in the uncoloured model.
		
		\noindent\emph{Key words:} Rainbow subgraph, Hamiltonicity, Berge cycle, Random hypergraph.
		
	\end{abstract}

	\section{Introduction}
	A \(k\)-uniform hypergraph (or \(k\)-graph) with vertex set \(V\) is a collection of \(k\)-element subsets called edges. Berge cycles are among the most natural generalizations of the notion of a cycle from graphs to hypergraphs~\cite{Berge1970}. A \emph{Berge cycle} of length \(t\) consists of \(t\) distinct vertices \(v_{1}(=v_{t+1}),v_{2},\dots ,v_{t}\) and \(t\) distinct edges \(e_{1},e_{2},\dots ,e_{t}\), with \(\{v_{i},v_{i+1}\}\subset e_{i}\) for each \(i\in[t]\). A \emph{Hamilton Berge cycle} of \(H\) is a Berge cycle that covers all vertices of \(H\).
	
	Hamiltonicity is a central concept in graph and hypergraph theory, with deep connections to combinatorial optimization, network design, and theoretical computer science. Determining whether a (hyper)graph contains a Hamiltonian cycle is NP-complete even for graphs~\cite{Karp1972}, which motivates the study of sufficient conditions that guarantee Hamiltonicity in random and pseudorandom structures.
	
	In recent years, the \emph{rainbow} (or edge-colored) version of Hamiltonicity has attracted considerable attention, partly due to its applications in circuit design, scheduling problems, and the study of parallel processing systems where distinct resources (colors) must be assigned to different links. Rainbow subgraph problems also appear naturally in Ramsey theory, combinatorial geometry, and the theory of Latin transversals.
	
	The model studied here is the edge-colored random \(k\)-uniform hypergraph. Let \(H^{k}(n,p)\) denote the random \(k\)-uniform hypergraph on vertex set \([n]=\{1,\dots ,n\}\) in which each of the \(\binom{[n]}{k}\) possible edges appears independently with probability \(p\). The edge-colored model \(H^{k}_{c}(n,p)\) is obtained by assigning to each edge of \(H^{k}(n,p)\) a color chosen uniformly and independently from the color set \([c]\). An edge-colored hypergraph is called \emph{rainbow} if no two edges share the same color.
	
	For graphs (\(k=2\)), the problem of rainbow Hamilton cycles in random edge-colored graphs has been extensively investigated.  Frieze and Loh~\cite{FriezeLoh2014} showed that if \(c=(1+o(1))n\) and \(p=\frac{(1+o(1))\log n}{n}\), then \(H^{2}_{c}(n,p)\) is w.h.p.\ rainbow Hamiltonian.  A breakthrough was achieved by Ferber and Krivelevich~\cite{FerberKrivelevich2016}, who proved the asymptotically tight result: for \(c=(1+o(1))n\) and \(p=\frac{\log n+\log\log n+\omega(n)}{n}\), the random edge-colored graph \(H^{2}_{c}(n,p)\) w.h.p.\ contains a rainbow Hamilton cycle.
	
	For \(k\geq 3\), the problem has also attracted significant attention in recent years. Bal, Berkowitz, Devlin, and Schacht~\cite{BalBerkowitzDevlinSchacht2021} determined the threshold for the appearance of a (non-rainbow) Hamilton \emph{Berge} cycle in random \(k\)-uniform hypergraphs. Their result states that for \(k\ge 3\), if \(p=(k-1)!\frac{\log n+\log\log n+c_{n}}{n^{k-1}}\), then
	$$\lim_{n\to\infty}\Pr\bigl(H^{k}(n,p)\text{ has a Hamilton Berge cycle}\bigr)=
	\left\{
	\begin{array}{l}
	\!\!0, \ \ \ \ \ \ \ \ c_{n}\to -\infty,\\
	\!\! e^{-e^{-c}},\ \ c_{n}\to c\in\mathbb R,
	\\
	\!\! 1, \ \ \ \ \ \ \ \ c_{n}\to \infty.
	\end{array}\right.
	$$

	Rainbow Hamiltonicity in random hypergraphs was first considered by Dudek, English and Frieze~\cite{DudekEnglishFrieze2018} for loose cycles.  Meanwhile, the study of expander properties, matching-extension tools (e.g., the Aharoni--Haxell theorem~\cite{AharoniHaxell2000}), and the P\'osa rotation-extension technique adapted to hypergraphs have provided essential machinery for tackling such problems.
	
	In this paper we extend the rainbow Hamiltonicity result of Ferber and Krivelevich from graphs to all \(k\geq 3\) in the Berge setting. Our main theorem shows that the same asymptotic tightness holds for random edge-colored \(k\)-uniform hypergraphs.
	
	\begin{theorem}\label{Theorem 1.1}
		Let integer $k\ge3$ be fixed and let constant $0<\varepsilon<1$ be fixed.	If $c=(1+\varepsilon)n$ and \(p=(k-1)!\frac{\log n+\log\log n+\omega(n)}{n^{k-1}}\). Then \(H^{k}_{c}(n,p)\) with high probability contains a rainbow Hamilton Berge cycle.
		Moreover, the conditions on $c$ and $p$ are asymptotically best possible.
	\end{theorem}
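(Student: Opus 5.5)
Our plan is to adapt the Ferber--Krivelevich strategy for $k=2$ to the Berge setting, reducing as much as possible to a graph problem via the $2$-shadow. For a $k$-graph $H$, each edge $e$ supplies any one of its $\binom{k}{2}$ pairs, and a Berge cycle is exactly a Hamilton cycle in an auxiliary multigraph whose edges are pairs labelled by distinct hyperedges. Rainbowness in $H$ then means that the hyperedges used have distinct colours, which automatically makes them distinct. We first expose $H$ in two rounds, $H=H_1\cup H_2$, with $p_2=\omega'(n)/n^{k-1}$ small. Here $H_1$ carries almost all of the probability and is used for expansion, while $H_2$ is reserved for boosters. The colour set $[c]$ is likewise split into a main palette $C_1$ of size $(1+\varepsilon/2)n$ and a reserve palette $C_2$ of size $\varepsilon n/2$.

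\textbf{Step 1 (minimum degree and structure).} At this $p$ the expected degree of a vertex is $\log n+\log\log n+\omega(n)$, so w.h.p.\ every vertex lies in at least two hyperedges, and those hyperedges can be taken with distinct colours. We also show that the set $\mathrm{SMALL}$ of vertices of degree below $\delta\log n$ is small and well separated: no two such vertices lie within bounded distance, and no hyperedge contains two of them. These are first-moment computations analogous to those of Bal--Berkowitz--Devlin--Schacht. Using the Aharoni--Haxell theorem, or a simple greedy argument justified by this separation, we assign to each small vertex two private hyperedges of distinct colours. We then show that the rest of the hypergraph contains a rainbow sub-configuration in which every vertex has degree $\Omega(\log n)$ in colours not yet used.

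\textbf{Step 2 (rainbow expander).} From $H_1$ restricted to palette $C_1$ we extract a spanning graph $\Gamma$ on $[n]$. Each edge of $\Gamma$ is a pair $\{u,v\}$ labelled by a hyperedge $e\supseteq\{u,v\}$, and each colour and each hyperedge is used at most a bounded number of times. We arrange that $\Gamma$ is an $(n/4,2)$-expander that is connected, with the property that every rainbow path obtained by P\'osa rotations within $\Gamma$ still has many endpoints. To control colours, we follow Ferber--Krivelevich: each vertex samples $O(1)$ random incident hyperedges, and we then prune so that each colour class has bounded size. Repeated colours then only cost a constant factor in the expansion. For a rainbow path $P$ of maximum length in $\Gamma$, the usual P\'osa lemma gives a set $\mathrm{End}(P)$ of size $\Omega(n)$, and for each endpoint $\Omega(n)$ further endpoints, provided the rotations avoid colours already on $P$. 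The colour budget $c=(1+\varepsilon)n$ leaves at least $\varepsilon n$ unused colours at every stage.

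\textbf{Step 3 (boosters and closing).} We perform at most $n$ extension rounds. Each round either extends the current rainbow path or closes it into a cycle, which a connected expander turns into a longer path. Within a round, $\Omega(n^2)$ pairs $\{a,b\}$ with $a\in\mathrm{End}(P)$ and $b\in\mathrm{End}(P_a)$ serve as boosters. A booster succeeds if $H_2$ contains a hyperedge through $\{a,b\}$ whose colour lies in $C_2$ and is unused. There are $\Omega(n^2)\cdot\Theta(n^{k-2})$ candidate hyperedges, so the failure probability is $\exp(-\Omega(n^k p_2))=\exp(-\Omega(n\omega'))$. This beats the union bound over the $\exp(O(n\log n))$ possible choices of $\Gamma$ provided $\Gamma$ has $O(n)$ edges and $\omega'$ grows suitably. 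We also ensure that no hyperedge is reused across rounds; the number of hyperedges already used is at most $n$, which is negligible compared with $\Theta(n^{k-2})$ per pair.

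\textbf{Main obstacle.} We expect the hardest step to be Step 2: sparsifying $H_1$ into $\Gamma$ so that it simultaneously has $O(n)$ edges (needed for the union bound in Step 3), expands under rotations that must respect both hyperedge-distinctness and colour-distinctness, and retains both low-degree vertices' degree-two rainbow neighbourhoods. To handle this we will first try the Ferber--Krivelevich trick of choosing, for each vertex, a random set of $D$ incident hyperedges with distinct colours, and then verify expansion of small sets by a direct first-moment argument over sets $S$ with $|S|\le n/4$, treating vertices of $\mathrm{SMALL}$ separately via their private hyperedges.

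\textbf{Optimality.} Fewer than $n$ colours makes a rainbow Hamilton cycle impossible. For $p$, the lower bound is the threshold of Bal--Berkowitz--Devlin--Schacht: when $c_n\to-\infty$, w.h.p.\ some vertex has degree at most one.
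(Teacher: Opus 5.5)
\textbf{There is a genuine gap in Steps~2--3, exactly where you locate the main obstacle, and it is structural rather than technical.}

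Your $\Gamma$ is spanning and is built by sampling $D=O(1)$ hyperedges at every vertex. So it has about $Dn>(1+\varepsilon)n$ edges, and colours must repeat. Once colours repeat, maximality of a longest rainbow path $P$ no longer controls the neighbourhood of its endpoints. A near-spanning $P$ uses all but roughly $\varepsilon n$ colours. Hence many edges of $\Gamma$ off $P$ (for a random colouring, most of them) carry a colour already on $P$, and nothing prevents all $O(1)$ edges at a given endpoint from being of this kind. Such an edge gives neither an extension nor a rotation. Consequences:
\begin{itemize}
\item you cannot conclude $N_\Gamma(\mathrm{End}(P))\subseteq \mathrm{End}(P)\cup N_P(\mathrm{End}(P))$, or its Berge analogue;
\item the expansion of $\Gamma$ therefore gives no lower bound on $|\mathrm{End}(P)|$, and there is no source for the $\Omega(n^2)$ booster pairs;
\item your ``close into a cycle and reopen via connectivity'' step is blocked for the same reason.
\end{itemize}
Your remark that repeated colours ``only cost a constant factor in the expansion'' misidentifies the obstruction. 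What blocks rotations is the colour set of the current path, which depends on $P$, changes with every rotation, and is essentially adversarial. Neither the global count of $\varepsilon n$ unused colours nor choosing the $D$ edges at each vertex with distinct colours says anything about the edges at a particular endpoint. The property ``every rainbow path obtained by rotations still has many endpoints'' is precisely what needs proof, and you assume it.

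\textbf{The missing idea.} Ferber--Krivelevich (for $k=2$) and the paper run the entire rotation/booster phase inside a \emph{rainbow} auxiliary hypergraph whose palette is disjoint from every other colour in use. Then every Berge path in it is automatically rainbow, and the colour-free rotation lemma applies. A sparse expander on $m$ vertices needs $Dm$ edges and hence $Dm$ colours, so it can only be built on a small part of the vertex set; the rest must be covered by a separately constructed path. Concretely, the paper:
\begin{itemize}
\item first finds a rainbow Berge path $P$ through all but $o(n)$ vertices, using rainbow pseudorandomness plus depth-first search on its own palette;
\item builds a rainbow expander with $O(|W|)$ edges on a random set $W$ with $|W|\approx n/\log\log n$, in palette $C_1$;
\item attaches the leftover vertices and the ends $x,y$ of $P$ to $W$ by a rainbow $m$-matching in palette $C_2$, via Aharoni--Haxell;
\item adds boosters of fresh colours until the leftover part has a Hamilton Berge cycle through a rooted virtual edge $(x',e^*,y')$. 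Deleting $e^*$ gives a path with prescribed ends, which is glued to $P$ through $e_x,e_y$.
\end{itemize}

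\textbf{Two further points in your Step~3 would also need repair.}
\begin{itemize}
\item Since $p_2\le p$, you have $\omega'=O(\log n)$. So $\exp(-\Omega(n\omega'))$ does not beat an unweighted union bound over $\exp(O(n\log n))$ choices of $\Gamma$. You need to weight each configuration by the probability of its edges, e.g.\ by $p^{e(\Gamma)}$ with no sprinkling, as the paper does.
\item A reserve palette of size $\varepsilon n/2$ cannot supply a fresh colour for each of up to $n$ booster rounds. In the paper this is harmless only because the booster phase involves $o(n)$ vertices.
\end{itemize}
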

	The value of \(c\) is clearly optimal because a rainbow Hamilton Berge cycle requires at least \(n\) distinct colors. The optimality of \(p\) follows from the threshold result of Bal et al.\ \cite{BalBerkowitzDevlinSchacht2021} mentioned above.
	
	\textbf{Proof sketch}
	
	The main proof follows a three-phase strategy: ``long path construction -- virtual edge embedding -- iterative absorption closure''. First, using the rainbow pseudorandomness lemma in the edge-colored random $k$-uniform hypergraph $H_c^k(n,p)$, we construct an almost-spanning rainbow Berge path $P$ of length $(1-o(1))n$, with its endpoints $x,y$ reserved for later connection. Second, in the subhypergraph induced by the remaining vertices and colors, $H[ [n]\setminus V(P) \cup \{x,y\}; [c]\setminus C(P) ]$, we introduce a special ''virtual edge'' $e^*$ (not necessarily present in $H$) connecting $x$ and $y$, and then iteratively build a rainbow Berge cycle $C^*$ around $e^*$ that covers all vertices not used by $P$ while preserving color distinctness. Finally, by removing $e^*$, we break $C^*$ into a rainbow Berge path with endpoints $x,y$, which is then concatenated with $P$ to obtain the desired rainbow Hamilton Berge cycle.
	
	\textbf{Key techniques}
	
	The core of the proof lies in non-trivially extending classical graph-theoretic tools to hypergraphs and handling the complexity inherent to Berge structures, where a single hyperedge may connect multiple consecutive vertex-pairs. We adapt the \textbf{P\'osa rotation-extension method} to $k$-uniform hypergraphs by defining extendable endpoint sets and hypergraph rotation operations to dynamically grow rainbow paths. The \textbf{Lov\'asz Local Lemma} is employed to control low-probability color conflicts in the random coloring. The \textbf{Aharoni--Haxell hypergraph matching theorem} is used to construct color-disjoint matchings in bipartite hypergraphs for linking special vertex sets. We also develop a {``rainbow hypergraph absorption'' technique}, which combines multi-stage random allocation with a refined Poisson approximation to manage the interdependence between hyperedge existence and color distribution. The synthesis of these methods not only overcomes the coupling difficulties caused by hypergraph sparsity and global color constraints, but also provides a general technical framework for extending rainbow Hamiltonicity results from graphs to higher uniformities.
	
	\textbf{Application and significance}
	
	Beyond its theoretical significance, Theorem 1.1 offers a rigorous mathematical foundation for designing and analyzing complex multi-agent network systems such as group-interactive social networks, multi-terminal communication systems, distributed storage networks, and biochemical reaction networks?where interactions are naturally modeled by hypergraphs. In such systems, edge coloring often corresponds to resource allocation (e.g., channels, time slots, or catalysts). A rainbow Hamiltonian Berge cycle represents a cyclic traversal of all nodes that uses each resource at most once, guaranteeing full coverage without conflicts. Our theorem provides sharp thresholds: $c \approx n$ means the number of resource types should match the number of nodes, and $p$ specifies the required connection density among node groups. The thresholds are proven to be tight, meaning they are essentially necessary?structures are unlikely to exist if these bounds are not met.
	
	\textbf{Notation}:
	
	Let $c \in \mathbb{N}$. Given an edge-colored $k$-uniform hypergraph $H$ with vertex set $V(H)$, edge set $E(H)$, and color set $[c]$, we denote by $|X|$ the size of a finite set $X$. For a subhypergraph $P \subseteq H$, let $C(P)$ denote the set of colors appearing on the edges of $P$, and $c(P) = |C(P)|$. A Berge path $P$ is written as $v_0 e_1 v_1 \dots e_l v_l$, and its length is denoted by $l(P)$. If $P' = v_0 e_1 v_1 \dots e_i v_i$ is a subpath of $P$, we write $P = P'e_{i+1} \dots e_l v_l$.
	
	For disjoint vertex sets $S, W \subseteq V(H)$ and a color set $C_0 \subseteq [c]$, we define:
	\begin{itemize}
		\item $E_H(S)$ to be the set of edges belong to ${S\choose k}$ in $H$, and $e_H(S) = |E_H(S)|$;
		\item $E_H(S, W)$ to be the set of edges intersecting both $S$ and $W$ in $H$, and $e_H(S, W) = |E_H(S, W)|$.
	\end{itemize}
	
	For a vertex $v \in V(H)$, we denote by:
	\begin{itemize}
		\item $N_H(v)$ the neighborhood of $v$ (set of vertices adjacent to $v$);
		\item $d_H(v)$ the degree of $v$ (number of incident edges);
		\item $d_H^c(v)$ the \emph{color degree} of $v$ (number of distinct colors on incident edges);
		\item $\delta(H) = \min_{v \in V(H)} d_H(v)$ and $\Delta(H) = \max_{v \in V(H)} d_H(v)$;
		\item $d_H^{C_0}(v)$ the number of colors from $C_0$ that appear on edges incident to $v$;
		\item $d_H(v,{W\choose k-1})$ the number of  $e \in E(H)$ satisfying $v\in e$ and  $|e \cap W| = k-1$.
	\end{itemize}
	
	For a vertex set $W \subseteq V(H)$ and a color set $C_0 \subseteq [c]$, the induced subhypergraph $H[W;C_0]$ is defined as the hypergraph on vertex set $W$ whose edges are all $e \in \binom{W}{k}$ with color $C(e) \in C_0$ that are present in $H$.
	
	If $S$ and $W$ are disjoint, the bipartite subhypergraph $H\left[S,\binom{W}{k-1};C_0\right]$ is the hypergraph on vertex set $S \cup W$ whose edges are those $e \in E(H)$ satisfying: $|e \cap S|=1$, $|e \cap W| = k-1$, and $C(e) \in C_0$.

	The rest of the paper is organized as follows. Section 2 establishes fundamental probabilistic properties of \(H^{k}_{c}(n,p)\) and proves the existence of long rainbow Berge paths. Section~3 constructs carefully designed expander subhypergraphs using the Lov\'asz Local Lemma and matching-extension theorems. Section~4 introduces the notion of \(e^{*}\)-boosters for hypergraphs and shows that sufficiently many such boosters exist under our conditions. Finally, Section~5 combines all ingredients to complete the proof of Theorem 1.1.

	\section{Properties of $H^{k}_c(n,p)$}\label{section2}
	
	In this chapter, we introduce some basic lemmas and properties of edge colored random $k$-uniform hypergraphs $H^{k}_c(n,p)$, which will be helpful in proving the main results. The following  famous lemma is the  Chernoff's inequality \cite{AlonSpencer2000}.
	
	\begin{theorem}\label{Theorem 2.1}
		Let $X \sim Bin(n,p)$, where $Bin(n,p) $ denotes the binomial random variable with parameters $n$ and $p$. For all $0<\varepsilon< 1$, we have \[\Pr(|X-np|>\varepsilon np)\leq e^{-\varepsilon^2np/3}.\]
		Moreover, for any  $s\geq 2np$, we have \[ \Pr(X\geq s)\leq e^{-3s/16}.\]
	\end{theorem}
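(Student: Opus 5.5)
The plan is the exponential-moment (Bernstein–Chernoff) method: write $\mu=np$ and use $\mathbb E e^{tX}=(1+p(e^t-1))^n\le e^{\mu(e^t-1)}$. For the upper tail, Markov's inequality applied to $e^{tX}$ with the optimal choice $t=\log(1+\delta)$ gives, for every $\delta>0$,
\[
\Pr\bigl(X\ge(1+\delta)\mu\bigr)\le \exp\bigl(-\mu\,\varphi(\delta)\bigr),\qquad \varphi(\delta)=(1+\delta)\log(1+\delta)-\delta .
\]
The same computation with $e^{-tX}$ gives the lower tail $\Pr(X\le(1-\varepsilon)\mu)\le e^{-\varepsilon^2\mu/2}$, using $(1-\varepsilon)\log(1-\varepsilon)+\varepsilon\ge\varepsilon^2/2$. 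Both inequalities are routine, and I will only cite them.

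\emph{Second claim.} Let $s\ge 2\mu$ and put $\delta=s/\mu-1\ge1$. Then $\Pr(X\ge s)\le e^{-\mu\varphi(\delta)}$. Since $s=(1+\delta)\mu$, it suffices to show
\[
\frac{\varphi(\delta)}{1+\delta}\ge\frac{3}{16}\qquad\text{for all }\delta\ge1 .
\]
The function $\varphi(\delta)/(1+\delta)=\log(1+\delta)-\delta/(1+\delta)$ has derivative $\delta/(1+\delta)^2>0$, so it is increasing. At $\delta=1$ it equals $\log2-\tfrac12\approx0.193>3/16$. This settles the second claim, with no restriction on $\mu$.

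\emph{First claim.} For $0<\varepsilon<1$, the series bound $\varphi(\varepsilon)\ge \varepsilon^2/(2+2\varepsilon/3)\ge 3\varepsilon^2/8$ holds. Writing $a=\varepsilon^2\mu$ and combining the two tails gives
\[
\Pr\bigl(|X-\mu|>\varepsilon\mu\bigr)\le e^{-3a/8}+e^{-a/2}=e^{-a/3}\bigl(e^{-a/24}+e^{-a/6}\bigr).
\]
The bracket is at most $1$ as soon as $a\ge 8$; for instance, at $a=8$ it is about $0.72+0.26<1$, and it decreases in $a$. So the one-sided-looking constant in the statement is obtained by spending the slack between the exponents $3/8$, $1/2$ and $1/3$, rather than by the usual union bound, which would give a factor $2$.

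\textbf{Main obstacle.} The difficulty is the regime $\varepsilon^2 np<8$. There the inequality as worded cannot be proved in general, because it is false. If $np(1+\varepsilon)<1$, the open interval $((1-\varepsilon)np,(1+\varepsilon)np)$ contains no integer, so the left-hand side equals $1>e^{-\varepsilon^2np/3}$.

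My plan is therefore to establish the first claim for $\varepsilon^2 np\ge 8$ by the argument above. I will also remark that every later application in the paper uses it with $np\to\infty$ and fixed $\varepsilon$, for instance degrees of order $\log n$, so this hypothesis is automatically satisfied there.
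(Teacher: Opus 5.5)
The paper gives no proof of this statement; it only quotes it as ``Chernoff's inequality'' from Alon--Spencer. Your self-contained exponential-moment derivation is correct, and you are right that the first inequality as printed is false. Standard references state the two-sided bound with a factor $2$, i.e.\ $\Pr(|X-np|>\varepsilon np)\le 2e^{-\varepsilon^2np/3}$, and that factor was evidently dropped in the quotation.

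A small precision in your counterexample: the complement of the event is the \emph{closed} interval $[(1-\varepsilon)np,(1+\varepsilon)np]$, and you also need $np>0$. Under $0<(1+\varepsilon)np<1$ this interval lies inside $(0,1)$, so the counterexample stands. For instance, $n=1$ and $p=\varepsilon=\tfrac12$ give left-hand side $1>e^{-1/24}$.

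Your two tail estimates already give the factor-$2$ form with no restriction, since $3/8$ and $1/2$ both exceed $1/3$. Your slack argument then correctly removes the factor once $\varepsilon^2np\ge 8$; at $a=8$ your bracket is $e^{-1/3}+e^{-4/3}\approx 0.98$.

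For the second claim, the constant $3/16$ is what the usual Bernstein form $\Pr(X\ge np+t)\le\exp\bigl(-t^2/(2(np+t/3))\bigr)$ gives with $t=s-np\ge s/2$. Indeed $t^2/(2(s-2t/3))$ is increasing in $t$ and equals $3s/16$ at $t=s/2$. You instead keep the exact exponent and use monotonicity of $\varphi(\delta)/(1+\delta)$. That is just as short and gives the slightly better constant $\log 2-\tfrac12$. Citing buys brevity, while your derivation is what exposes the misquotation and pins down the regime where the printed form actually holds.

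One caveat on your closing remark: not every later use has fixed $\varepsilon$. In the proof of part (3) of Lemma~\ref{Lemma 3.8.}, the threshold $\mu|X|^kp/k!$ exceeds the stated mean $\mu\binom{|X|}{k}p$ only by a factor $1+O(k^2/|X|)$. So Theorem~\ref{Theorem 2.1} is invoked there with $\varepsilon=O(1/|X|)$ and $\varepsilon^2np\to 0$, which is why the paper writes $e^{-o(1)\mathrm{E}(e_{H_1}(X))}$. That step is defective independently of the factor $2$, so it does not affect your proof. It does mean your hypothesis $\varepsilon^2np\ge 8$ is not automatically satisfied everywhere the paper uses the bound.
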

	
	Given an edge-colored random $k$-uniform hypergraph $H\sim H^{k}_c(n,p)$ with vertex set $[n]$, let $SMALL:=\{v\in [n]:d_H(v)\leq \eta \log n\}$ be the set of vertices with 'small' degree,  where $\eta>0$ is an enough small   constant. Let $E\text{-}SMALL:=\{e\in E(H):e\cap SMALL\neq\phi\}$. Next, we introduce some properties of  $H$, vertex set $SMALL$ and edge set $E\text{-}SMALL$.
	
	\begin{lemma}\label{Lemma 2.2} Let integer $k\geq 3$, and let   $0<\varepsilon<1$, if $c=(1+\varepsilon)n$  and ${(k-1)!\log n\over n^{k-1}}\leq p\leq2{(k-1)!\log n\over n^{k-1}}$. Then  edge-colored random $k$-uniform hypergraph $H\sim H^{k}_c(n,p)$ is w.h.p. has the following properties for all sufficiently large n$:$
		
		{\rm(1)} $\varDelta(H)\leq 10\log n$.
		
		{\rm(2)} $|SMALL|\leq n^{0.4}$.
		
		{\rm(3)} $C(e)\neq C(f)$ for all distinct $e,f\in E\text{-}SMALL$ .
		
		{\rm(4)} 
		There is no Berge path of length at most 4 whose two endpoints, not necessarily distinct, belong to $SMALL$
		
		{\rm(5)} For each $v\in V(H)$, $d_H^c(v)\geq d_H(v)-2.$
		
		{\rm(6)}  For every  $r\in[c]$, the number of edges  which are colored  $r$ is at most $10\log n$.
		
		{\rm(7)} For every subset $X\subseteq V(H)$, if $|X|\leq{n\over(\log n)^{1\over k-1.5}}$, then $e_H(X)\leq 6|X|$.
		
		{\rm(8)} For every subset $X\subseteq V(H)$, if $|X|\geq{n\over(\log n)^{1\over k-1.5}}$, then $e_H(X)\leq{|X|^k\over k!}p({n\over|X|})$.
	\end{lemma}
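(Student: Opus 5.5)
The plan is to prove each of the eight properties separately by a first-moment (union bound) computation, combined with the Chernoff bounds of Theorem~\ref{Theorem 2.1}. At the end we take a union bound over the eight failure events, each of which has probability $o(1)$.

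Throughout, note that the degree of a vertex is $d_H(v)\sim \mathrm{Bin}(\binom{n-1}{k-1},p)$. Its mean is
\[
\mu=\binom{n-1}{k-1}p\in[(1-o(1))\log n,\;(2+o(1))\log n].
\]
For (1), we apply the second Chernoff bound with $s=10\log n\ge 2\mu$. This gives $\Pr(d_H(v)\ge 10\log n)\le n^{-30/16}$, and a union bound over the $n$ vertices finishes it.

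For (2), we bound the lower tail directly:
\[
\Pr\bigl(d_H(v)\le\eta\log n\bigr)\le\sum_{i\le\eta\log n}\binom{N}{i}p^i(1-p)^{N-i}\le (e\mu/(\eta\log n))^{\eta\log n}e^{-\mu+o(1)}\le n^{-1+\eta\log(2e/\eta)+o(1)},
\]
where $N=\binom{n-1}{k-1}$. For $\eta$ small enough this is at most $n^{-0.7}$. Hence $\mathbb E|SMALL|\le n^{0.3}$, and Markov's inequality gives $|SMALL|\le n^{0.4}$ w.h.p.

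For (3) and (4), we use the same small-degree estimate, with the conditioning handled by revealing a bounded number of edges. Fixing a bounded structure changes a degree by $O(1)$, so each of the prescribed vertices still has probability at most $n^{-1+\eta'}$ of being in $SMALL$, and these events are almost independent once the structure edges are removed. For (4), we count Berge paths of length $\ell\le4$ with both ends specified. There are at most $n^{\ell+1}\cdot n^{\ell(k-2)}$ choices of vertices and edges, each present with probability $p^\ell$. This gives $O(n^{\ell+1}(\log n/n)^{\ell}\,n^{-2+2\eta'})=o(1)$; when the two endpoints coincide we save a factor $n$ and pay only $n^{-1+\eta'}$. For (3), two edges $e,f$ meeting $SMALL$ with the same colour contribute expected count at most
\[
(n\cdot n^{k-1}p)^2\,n^{-2+2\eta'}/c=O(\log^2 n\, n^{-1+2\eta'})=o(1),
\]
with the case $e\cap f\neq\emptyset$ handled similarly.

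For (5), $d^c_H(v)<d_H(v)-2$ requires either three pairs of edges at $v$ with repeated colours, or a colour appearing three times among the edges at $v$. Conditional on $d_H(v)\le10\log n$, this has probability $O((\log n)^6/n^3)$, so a union bound works. For (6), the number of edges of colour $r$ is $\mathrm{Bin}(\binom nk,p/c)$ with mean $O(\log n/k)$. The second Chernoff bound then gives probability $n^{-\Omega(1)\cdot 30/16}$, which is enough after a union bound over $c=O(n)$ colours.

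Properties (7) and (8) are the main obstacle, namely the union bound over all sets $X$. For (7), with $|X|=x$ we bound
\[
\Pr\bigl(e_H(X)>6x\bigr)\le\binom{\binom xk}{6x}p^{6x}\le\Bigl(\frac{ex^{k}p}{6x\,k!}\Bigr)^{6x}.
\]
Multiplying by $\binom nx\le(en/x)^x$ and summing over $x$ gives the sum of terms
\[
\Bigl[\tfrac{en}{x}\bigl(C x^{k-1}\log n/n^{k-1}\bigr)^{6}\Bigr]^x .
\]
The bracket is at most $C'(x/n)^{6(k-1)-1}(\log n)^6$. For $x\le n(\log n)^{-1/(k-1.5)}$ this is at most $(\log n)^{6-(6k-7)/(k-1.5)}<(\log n)^{-1}$, since $(6k-7)/(k-1.5)>6$. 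The sum is therefore $o(1)$; small $x$ (bounded $x$) need a separate check, which is trivial since $e_H(X)\le\binom xk\le 6x$ for $x\le 2k$, and in general the small-$x$ terms decay like $n^{-\Omega(x)}$.

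For (8), the expectation of $e_H(X)$ is $\binom xk p\le x^kp/k!$, while the target is $(n/x)$ times larger. We therefore apply the tail bound $\Pr(\mathrm{Bin}\ge s)\le(e\mu/s)^s$ with $s=\frac{x^k}{k!}p\frac nx$, which gives $(ex/n)^s$. Since $s\ge x^{k-1}n\log n/ (k n^{k-1})\cdot(k-1)!\ge x(\log n)^{1-(k-2)/(k-1.5)}/k$, a quantity that is $\gg x$, this beats the factor $\binom nx\le (en/x)^x$. Summing over $x$ then gives $o(1)$; note that $\log(n/x)$ is at least a constant here unless $x$ is near $n$, which requires only care in the exponent and is where I expect the delicate bookkeeping to lie.
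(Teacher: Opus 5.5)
\textbf{The gap is in (8), for large $|X|$.} Your bound $\Pr(e_H(X)\ge s)\le (ex/n)^s$ is at least $1$ as soon as $x\ge n/e$. The controlling quantity is $\log(n/(ex))$, not $\log(n/x)$. So the bound says nothing on the whole range $[n/e,n]$.

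No bookkeeping can repair this, because (8) is false there. Take $X=V(H)$. The threshold is $\frac{n^k}{k!}p$, whereas $e(H)\sim\mathrm{Bin}(\binom nk,p)$ has mean $\binom nk p=\frac{n^k}{k!}p-\Theta(\log n)$ and standard deviation $\Theta(\sqrt{n\log n})$. By the central limit theorem, $\Pr\bigl(e(H)>\frac{n^k}{k!}p\bigr)\to 1/2$, so (8) does not hold w.h.p.

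The paper's proof has the same defect. It applies the second Chernoff bound of Theorem 2.1 to every $|X|$, but that bound requires $s\ge 2\,{\rm E}(e_H(X))$ and is only justified for $|X|\le n/2$.

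The right repair is to restrict (8) to, say, $n(\log n)^{-1/(k-1.5)}\le |X|\le n/e^2$. On this range:
\begin{itemize}
\item $\log(n/(ex))\ge 1$;
\item $\log(en/x)\le 1+\frac{\log\log n}{k-1.5}$;
\item your estimate $s\ge \frac{x}{k}(\log n)^{1/(2k-3)}$ holds.
\end{itemize}
Hence $\binom nx(ex/n)^s\le e^{-x}$ for large $n$, and the union bound is $o(1)$. This range covers every later use of (8): subsets of $W$ in Lemma 3.8 (and hence in Theorem 3.9), and sets of size $O(n/\sqrt{\log n})$ in Claim 4 of Theorem 3.12.

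Properties (1)--(7) are fine.
\begin{itemize}
\item For (6), your observation that the number of $r$-coloured edges is $\mathrm{Bin}(\binom nk,p/c)$ is cleaner than the paper's conditioning on $e(H)$.
\item For (3), your joint first moment over pairs of edges at $SMALL$ vertices works but is heavier than needed. The paper gets $|E\text{-}SMALL|=O(n^{0.4}\log n)$ from (1)--(2), and since colours are independent of the edge set, a birthday bound finishes.
\item For (4), your joint first moment, using near-independence of the events $\{u\in SMALL\}$ after removing the $O(1)$ path edges, is the rigorous form of the paper's argument. The paper instead multiplies a fixed-pair path probability by the random quantity $|SMALL|^2$.
\end{itemize}
Two harmless slips:
\begin{itemize}
\item In (5), the event that one colour appears three times at $v$ has probability $O((\log n)^3/n^2)$, not $O((\log n)^6/n^3)$. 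This is still $o(1/n)$, so the union bound over vertices goes through.
\item In (7), the bracket is $O((\log n)^{-2/(k-1.5)})$, which is not below $(\log n)^{-1}$ when $k\ge 4$. Also, $\binom xk\le 6x$ fails at $x=2k$ for $k\ge 4$. Neither matters: a uniform $o(1)$ bound on the bracket already gives $\sum_x(\cdot)^x=o(1)$, so no separate small-$x$ check is needed.
\end{itemize}
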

	\begin{proof}
		(1). For each $v\in [n]$, since $d_H(v)\sim Bin({n-1\choose k-1}, p)$, we have
		\begin{align*}
		\Pr[d_H(v)\geq 10\log n]&\leq {{n-1\choose k-1}\choose 10\log n}p^{10\log n}\leq\left({en^{k-1}p\over (k-1)!10\log n}\right)^{10\log n}=o(1/n).
		\end{align*}
		There are at most $n$ vertices, taking the union bound we can obtain the conclusion.\qed
		
		(2). For each $v\in [n]$, let $A_v$ be the event '$v\in SAMLL$', let $X_v$ denote its indicator random variable and $X=\sum_{v\in [n]} X_v$. We have
		\begin{align*}
		{\rm E}(X_v)&=1\cdot\Pr[v\in SMALL]\\&=
		\Pr\left[Bin\left({n-1\choose k-1},p\right)\leq {\eta\log n}\right]\\&=\sum_{i=0}^{\eta \log n}{{n-1\choose k-1}\choose i}p^i(1-p)^{{n-1\choose k-1}-i}\\
		&\leq \sum_{i=0}^{\eta \log n}\left({en^{k-1}p\over (k-1)!i}\right)^ie^{-({n^{k-1}\over (k-1)!}p-i)}\\
		&\leq\eta\log n\left({2e\over \eta}\right)^{\eta \log n}e^{-(2-\eta)\log n}\\
		&\leq\eta\log n\left(\left({2e\over \eta}\right)^{\eta }e^{-(2-\eta)}\right)^{\log n}\\
		&\leq n^{-0.7},
		\end{align*}
		which the last inequality holds for some $\eta$ satisfied $({2e\over \eta})^{\eta}e^{-2+\eta}\leq e^{-0.71}$. Linearity of expectation gives
		\begin{align*}
		{\rm E}(X)=\sum_{v\in V(H)} {\rm E}(X_v)\leq n\cdot n^{-0.7}=n^{-0.3}.
		\end{align*}
		Finally, by applying Markov's inequality, there is
		\[\Pr(X\geq n^{0.4})\leq {{\rm E}(X)\over n^{0.4}}=o(1). \]\qed

		(3). From (1) and (2), we have $|E\text{-}SMALL|\leq |SMALL|\cdot \varDelta(H)\leq n^{0.4}\cdot 10\log n=o(n^{0.5})$, therefore
		$$\Pr[exist\ e,\ e'\in E\text{-}SMALL \ with\ the\ same\ color]={|E\text{-}SMALL| \choose 2}{1\over c}=o(1).$$\qed
		
		(4). Fix two vertices $x$ and $y$ of $SMALL$, there is \begin{align*}
		&\Pr[ exist\ a\ Berge\ path\ at\ most\ 4\ between\ x\ and \ y]\\
		&\leq {n\choose k-2}p+{n\choose 1}{n-3\choose k-2}^2p^2+{n\choose 2}{n-4\choose k-2}^3p^3+{n\choose 3 }{n-4\choose k-2}^4p^4\\
		&=(1+o(1))n^3(n^{k-2})^4p^4\leq{17{\log n}^4\over n}.
		\end{align*}
		Upper bound of the probability of this event is $|SMALL|^2{17(\log n)^4\over n}=o(1)$. \qed
		
		(5). Assume there exists a vertex $v\in [n]$ such that $d_H^c(v)\leq d_H(v)-3$, that means there are two disjoint edge pairs $\{e_1,\ e_2\}$ and $\{e_3,\ e_4\}$  incident to $v$ such that $e_1$ and $e_2$ have the same color, both $e_3$ and $e_4$ have the same color. Upper bound of the probability of this event is
		\begin{align*}
		{d_H(v)\choose 4}{4\choose 2}\cdot\left({1\over c}\right)^2\leq{10\log n \choose 4}{4\choose 2}\cdot\left({1\over c}\right)^2=o\left({1 \over n}\right).
		\end{align*}
		There are at most $n$ choices of $v$, taking the union bound we can obtain the conclusion.\qed
		
		(6). Let $r$ be an element of $[c]$, and  let $X_r$ be the random variable which counts the number of edges colored $r$ in $H$. It's easy to see $X_r\sim Bin(e(H),{1\over c})$ and $e(H)\sim Bin({n\choose k},p)$. Therefore by Theorem \ref{Theorem 2.1}  we can get w.h.p. $e(H)\leq2{n\choose k}p\leq {4n\log n\over k}\leq 2n\log n$, and
		\begin{align*}
		c\cdot\Pr[X_r\geq 10\log n]&\leq c{2n\log n\choose 10\log n}\left({1\over c}\right)^{10\log n}\\
		&\leq c\left({2en\log n\over 10c\log n}\right)^{10\log n}\\
		&\leq c\left({2e\over 10(1+\varepsilon)}\right)^{10\log n}=o(1).
		\end{align*}
		\qed
		
		(7). For any vertex set $X\subseteq [n]$ of size $t\leq{n\over(\log n)^{1\over k-1.5}}$,  the probability of $e_H(X)> 6|X|$ is  at most
		\begin{align*}
		{{t\choose k}\choose 6t}p^{6t}
		\leq\left({et^{k-1}p\over k!6}\right)^{6t}
		\leq\left({e\over k3}\log n\left({t\over n}\right)^{k-1}\right)^{6t}.
		\end{align*}
		Hence taking the union bound, the probability is at most
		\begin{align*}
		&\sum_{0<t\leq{n\over(\log n)^{1\over k-1.5}}}\left({en\over t}\right)^t\left({e\over k3}\log n\left({t\over n}\right)^{k-1}\right)^{6t}\\
		=&\sum_{0<t\leq{n\over(\log n)^{1\over k-1.5}}}\left({en\over t}\right)^t\left({e\over k3}\log n\left({t\over n}\right)^{k-1.5}\left({t\over n}\right)^{0.5}\right)^{6t}\\
		\leq&\sum_{0<t\leq{n\over(\log n)^{1\over k-1.5}}}\left({en\over t}\right)^t\left({e\over k3}\left({t\over n}\right)^{0.5}\right)^{6t}\\
		\leq&\sum_{0<t\leq{n\over(\log n)^{1\over k-1.5}}}\left[e\left({e\over k3}\right)^6\left({t\over n}\right)^2\right]^{t}
		=o(1).
		\end{align*}
		\qed
		
		(8). Fixed a vertex set $X\subseteq[n]$ of size $t\geq{n\over(\log n)^{1\over k-1.5}}$,  we have
		$${|X|^k\over k!}p\left( {n\over|X|}\right) ={|X|\over k}\log n\left( {|X|\over n}\right) ^{k-1}\left( {n\over|X|}\right) ={|X|\over k}\log n\left( {|X|\over n}\right) ^{k-2}\geq{|X|\over k}(\log n)^{1\over 2k-3}.$$
		Since  $e_H(X)\sim Bin({|X|\choose k}, p)$,  by Theorem \ref{Theorem 2.1}, there is
		$$\Pr\left[e_H(X)\geq {|X|^k\over k!}p\left( {n\over|X|}\right) \right]\leq\exp\left( -{3\over 16}\cdot{|X|^k\over k!}p\left( {n\over|X|}\right)\right)\leq \exp\left(-{3\over 16}\cdot{|X|\over k}(\log n)^{1\over 2k-3}\right) $$
		Hence taking the union bound, the probability is at most
		\begin{align*}
		&\sum_{{n\over (\log n)^{1\over k-1.5}}\leq t}{n\choose t}\exp\left( -{3\over 16}\cdot{t\over k}(\log n)^{1\over 2k-3}\right) \\
		&\leq\sum_{{n\over (\log n)^{1\over k-1.5}}\leq t}\left({en\over t}\right)^t\exp\left( -{3\over 16}\cdot{t\over k}(\log n)^{1\over 2k-3}\right) \\
		&\leq\sum_{{n\over (\log n)^{1\over k-1.5}}\leq t}\exp\left(t\log {en\over t}\right)\exp\left( -{3\over 16}\cdot{t\over k}(\log n)^{1\over 2k-3}\right) =o(1).
		\end{align*}
	\end{proof}
	The above Lemma's (2), (3) and (4)  shows that the order of vertex set $SMALL$ is 'small', and the  edge set $E\text{-}SMALL$ is rainbow, and the distance between any two vertices of $SMALL$ are not too close,  respectively. Next, we give that each $H\sim H_c^k(n,p)$ is w.h.p. has a 'lang' rainbow Berge path.

	\begin{definition}
		Let $g=g(n)$ be a positive integer. An edge-colored $k$-uniform
		hypergraph $H$ on $n$ vertices is called $g$-rainbow-pseudorandom if,
		for every pair of disjoint sets $A,B\subseteq V(H)$ with
		$|A|=|B|=g$,
		$c(E_H(A,B))\geq n$.
	\end{definition}

	The following Theorem is a generalization of  Lemma 4.4 \cite{BenEliezerKrivelevichSudakov2012}.
	
	\begin{theorem}\label{ Theorem 2.4} Let $g$ be a positive integer and let $H'$ be an edge-colored
		$k$-uniform hypergraph on $n$ vertices. If $H'$ is
		$g$-rainbow-pseudorandom, then $H'$ contains a rainbow Berge path
		of length at least $n-2g$.
	\end{theorem}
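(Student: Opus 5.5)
We adapt the DFS argument of Ben-Eliezer, Krivelevich and Sudakov (Lemma 4.4 of \cite{BenEliezerKrivelevichSudakov2012}) to colors and Berge paths. We run a depth-first-search-type procedure that maintains three disjoint vertex sets $S$ (processed), $T$ (unvisited) and $U$ (the current stack). We also maintain a rainbow Berge path $P$ whose vertex sequence is exactly the stack $U$, together with the set $C(P)$ of colors used on its edges. At the start, $T=V(H')$ and $S=U=\emptyset$.

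At each step we proceed as follows. If $U=\emptyset$, we move an arbitrary vertex of $T$ to $U$. Otherwise, let $u$ be the last vertex of $U$ (the current endpoint of $P$). We look for a vertex $t\in T$ and an edge $e\in E(H')$ with $\{u,t\}\subseteq e$, $e\notin E(P)$, and $C(e)\notin C(P)$. If such a pair exists, we append $e,t$ to $P$; this keeps $P$ a rainbow Berge path, since $t$ is new, $e$ is a new edge, and $C(e)$ is a new color. Otherwise we pop $u$ from $U$ into $S$, deleting the last edge of $P$ and freeing its color. The key invariant is the following. When a vertex $s$ enters $S$, every edge containing $s$ and a vertex of $T$ at that moment either has a color in $C(P)$ or is an edge of $P$. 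Since $T$ only shrinks afterwards, this remains true for later $T$, and it is stated relative to the path present at that moment.

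To finish, we look at the moment when $|S|=|T|$; this moment exists because each step changes $|S|-|T|$ by at most one, starting from $-n$ and ending at $n$. If at this moment $|U|\ge n-2g+1$, then $P$ has length $|U|-1\ge n-2g$ and we are done. Otherwise $|S|=|T|\ge g$, so we may take $A\subseteq S$ and $B\subseteq T$ with $|A|=|B|=g$. By pseudorandomness, $c(E_{H'}(A,B))\ge n$. We want to derive a contradiction by showing that every edge meeting both $A$ and $B$ has a color from a set of fewer than $n$ colors.

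This is the main obstacle, because the invariant refers to the path $P$ at the time each $s$ was popped, and that path varies over time. We would strengthen the bookkeeping so that the color set is monotone. Instead of freeing colors on backtracking, we permanently retire the colors of edges whose far endpoint has been popped. We then let $C^*$ be the set of all colors ever used on path edges. Every edge of $P$ ever used has a distinct far endpoint, which enters $U$ exactly once, so $|C^*|\le n-1$. The invariant then becomes the following: every edge meeting $S$ and $T$ either has a color in $C^*$ or was used as a path edge, and in the latter case its color also lies in $C^*$. Hence $c(E_{H'}(A,B))\le |C^*|\le n-1<n$, which is the desired contradiction.

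We must then check that retiring colors does not break the extension step. The only role of colors in that step is to prevent conflicts inside the current path, so forbidding a larger set $C^*\supseteq C(P)$ is harmless: it only restricts the choices, and the contradiction argument uses exactly this larger forbidden set. We should also verify that the final path uses at most $|U|-1$ distinct edges, which holds because each step adds exactly one new edge.
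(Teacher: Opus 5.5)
Your proof is correct and is the same depth-first-search argument, adapted from Ben-Eliezer--Krivelevich--Sudakov, that the paper uses. You also supply the step the paper only asserts, namely why $c(E_{H'}(A,B))<n$: the colours ever placed on path edges number at most $n-1$, because each vertex enters the stack once and brings at most one edge. Note that the colour-retirement modification is unnecessary, since $C(P)$ at each pop time already lies in this union of at most $n-1$ colours.
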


	\begin{proof}
		Algorithm. (Depth-First Search)
		
		(1) Arbitrarily choose an edge $e_1\in E(H')$ and two vertices $v_0, v_1\in e_1$. Initialize $P_1=v_0e_1v_1$, $A_1=V(H')\setminus \{v_0,v_1\}$, $B_1=\phi$,  $E_1=E_{H'}(v_1)$.
		
		(2) Choose a pair $(e_{i+1},v_{i+1})$ that unchosen before satifies the edge  $e_{i+1}\in E_{H'}(v_i)\setminus P_i$ and  vertex $v_{i+1}\in e_{i+1}\cap A_i$. If there are no remaining pairs, then proceed to step (6).
		
		(3) Test if $C(e_{i+1})\notin C(P_i)$.
		
		(4) If yes, then set  $ P_{i+1}=P_ie_{i+1}v_{i+1}, A_{i+1}=A_i\setminus\{V_{i+1}\}, B_{i+1}=B_i$.
		
		(5) If no, then  proceed to step (2).
		
		(6) Set $ P_{i+1}=P_i\setminus\{e_i,v_i\}, A_{i+1}=A_i, B_{i+1}=B_i\cup \{v_i\}$, proceed to step (2).

		According to the definitions of $A_i, B_i$, it is not difficult to obtain $c(E_H'(A_i, B_i) )<n$. Therefore,  at some steps $i$, we must have $|B_i|=g$ and  $|A_i|<g$ since   $|B_i|$ does not decrease and $|A_i|$  does not increase in this algorithm. Otherwise,  it contradicts the definition of $g$-rainbow-pseudorandom. Based on this, $l(P_i)+1=n-|A_i|-|B_i|\geq n-2g+1$.
	\end{proof}
	\begin{lemma}\label{ Lemma 2.5} Let integer $k\geq 3$, and let $0<\mu<\varepsilon<1$ be  constants. If $c=(1+\varepsilon)n$ and ${(k-1)!\log n\over n^{k-1}}\leq p\leq2{(k-1)!\log n\over n^{k-1}}$, then  edge- colored random $k$-uniform hypergraph $H\sim H_c^k(n,p)$ is w.h.p. satifies the following result. For every subset $C'\subseteq[c]$ of size $|C'|\geq(1+\mu)n$, the subhypergraph $H[[n];C']\subseteq H$ is ${n\over (\log n)^{0.49}}$-rainbow-pseudorandom.
	\end{lemma}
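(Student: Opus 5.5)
The plan is to show directly that for every pair of disjoint sets $A,B\subseteq[n]$ with $|A|=|B|=g:=n/(\log n)^{0.49}$, and every $C'\subseteq[c]$ with $|C'|\ge(1+\mu)n$, the edges of $E_H(A,B)$ carry at least $n$ colours from $C'$. The argument has two steps. First we show that $E_H(A,B)$ contains many edges. Then we show that the random colouring of these edges cannot miss too many colours.

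\emph{Step 1: many edges.} Count edges with exactly one vertex in $A$, one vertex in $B$, and $k-2$ vertices outside $A\cup B$. There are at least $g^2\binom{n-2g}{k-2}\ge (1-o(1))g^2 n^{k-2}/(k-2)!$ such potential edges, each present independently. The expected number is
\[
\Theta\bigl(g^2 n^{k-2}p\bigr)=\Theta\bigl(g^2\log n/n\bigr)=\Theta\bigl(n(\log n)^{0.02}\bigr).
\]
By Theorem~\ref{Theorem 2.1}, the probability that fewer than half this many appear is $\exp(-\Omega(n(\log n)^{0.02}))$. The number of pairs $(A,B)$ is at most $\binom{n}{g}^2\le \exp(2g\log(en/g))=\exp(O(n(\log n)^{-0.49}\log\log n))$, which is negligible against the tail. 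So w.h.p.\ every such pair has $m:=e_H(A,B)\ge M:=n(\log n)^{0.01}$.

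\emph{Step 2: colours.} The colours are independent of the edge set, so we condition on the hypergraph. Fix $(A,B)$, fix a set $S$ of $M$ edges of $E_H(A,B)$, and fix a set $D$ of $c-n+1\ge\varepsilon n$ colours. The event that $E_H(A,B)$ uses fewer than $n$ colours is contained in the event that all $M$ edges avoid some such $D$. That event has probability at most
\[
\binom{c}{\varepsilon n}\Bigl(1-\tfrac{\varepsilon n}{c}\Bigr)^{M}\le 2^{2n}\exp\bigl(-\Omega(n(\log n)^{0.01})\bigr).
\]
This handles the restriction to $C'$ as well. If $C'$ has at least $(1+\mu)n$ colours and $E_H(A,B)$ sees fewer than $n$ of them, then some $\mu n$ colours of $C'$ are unseen, and the set of unseen colours in $[c]$ has size at least $\mu n$. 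So it suffices to bound, for every set $D$ of $\mu n$ colours, the probability that all edges of $S$ avoid $D$. This is $\binom{c}{\mu n}(1-\mu/(1+\varepsilon))^{M}\le \exp(O(n)-\Omega(n(\log n)^{0.01}))$.

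A union over pairs $(A,B)$, contributing $e^{o(n)}$, finishes the argument. The union over $C'$ is absorbed because we take the union over colour sets $D$, whose count is at most $2^{c}$. The choice of $S$ is no issue: we simply take the first $M$ edges in a fixed order once the hypergraph is exposed.

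\emph{Main obstacle.} The delicate point is making the exponents beat the union bounds. The number of colour sets is $e^{\Theta(n)}$, so we need $\Omega(n\cdot\omega(1))$ edges between every pair $A,B$. This is why the cross-edge count must be $n(\log n)^{0.02}$, which is exactly what $g=n/(\log n)^{0.49}$ gives, with a margin of $(\log n)^{0.02}$. A smaller $g$ with exponent above $1/2$ would fail. The argument also relies on the fact that $g\log(n/g)=o(n)$, so the union over $(A,B)$ costs only $e^{o(n)}$.
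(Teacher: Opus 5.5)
Your proof is correct and is essentially the paper's argument. Both take a union bound over the pair $(A,B)$ and over a set $D$ of about $\mu n$ colours of $C'$ that are missed by $E_H(A,B)$, and in both the roughly $n(\log n)^{0.02}$ expected cross edges beat the $e^{O(n)}$ choices of $D$. The only difference is bookkeeping. The paper skips your edge-exposure Step 1 and the conditioning: it notes directly that each of the $g^2\binom{n-2g}{k-2}$ potential cross $k$-sets is independently present with a colour in $D$ with probability $p|D|/c$. Hence for fixed $(A,B,D)$ the failure probability is at most $(1-p|D|/c)^{g^2\binom{n-2g}{k-2}}=\exp(-\Omega(n(\log n)^{0.02}))$ in one step.
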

	\begin{proof} Following the definition of $g$-rainbow-pseudorandom. Suppose toward a contradiction that exists a color set $C'\subseteq[c]$ of size $|C'|\geq(1+\mu)n$, the hypergraph $H[[n];C']\subseteq H$ is not ${n\over (\log n)^{0.49}}$-rainbow-pseudorandom. That means there are two disjoint subsets $X$ and $Y$ of size ${n\over (\log n)^{0.49}}$, and a color set $C''\subseteq C'$ of size $rn(r\leq \varepsilon-\mu)$,  such that  none of the colors of $C''$ appearing on the edges between $X, Y$  in $H[[n];C']\subseteq H$. There is
		\begin{align*}
		&\Pr[none\ of\ the\ colors\ of\ C''\ appearing\ on\ the\ edges\ between\ X,\ Y\ in \ H[[n];C']\subseteq H]\\
		&\leq\left( 1-p+p\left( 1-{rn\over |C'|}\right) \right)^{e_{H}(X,Y)}\leq\left( 1-p+p\left( 1-{rn\over c}\right) \right) ^{|X||Y|{n-|X|-|Y|\choose k-2}}\\
		&\leq e^{-{prn\over c}|X||Y|(1+o(1)){n\choose k-2}}\leq e^{-{r\over c}(k-1)n^2(\log n)^{0.02}}\\
		&\leq e^{-{r\over (1+\varepsilon)}r(k-1)n(\log n)^{0.02}}.
		\end{align*}
		Since ${c\choose |C'|}{|C'|\choose |C''|}{n\choose |X|}{n\choose |Y|}\leq 2^{5n}$, taking the union bound we can get the result.
		
	\end{proof}
	Note that the order of $C'$ in this Lemma only needs to be slightly higher than $n$, without reaching  the value of $c$ in Theorem \ref{Theorem 1.1}. In fact, we use the Theorem \ref{ Theorem 2.4} for the subhypergraph instead of directly use it for $H\sim H_c^k(n,p)$ in the proof of Theorem \ref{Theorem 1.1}.
	
	\section{Expander $k$-uniform hypergraph}\label{Section 3}
	The Chapter 3  shows that the  edge set $E\text{-}SMALL$ is rainbow, and the  edge-colored random $k$-uniform hypergraph $H\sim H_c^k(n,p)$ is w.h.p. contains a 'lang' rainbow Berge path. Next,   we randomly select two  edges that incident to $v$ for each $v\in SMALL$. And let $E_S$ be the union of  the edges  selected for all vertices of $SMALL$. Obviously,  $E_S$ is rainbow. Let color subset $C_S:=C(E_S)$ and let vertex subset $V_S:=V(E_S)$.
	
	In this Chapter, we consider   a 'good' vertex subset disjoint with $V_S$ and two 'good' non-intersect  color subsets disjoint with  $C_S$  in  $H$, in which color sets and corresponding vertex sets form  subgraphs of $H$ is 'good' expander. In addition, this is related to the  'long' rainbow Berge path. Let's first introduce the definition of expander graph and some related properties.

	\begin{definition}\label{Definition 3.1}Let $r$ be a positive integer and let $m>0$.  A $k$-uniform hypergraph $H$  is called an $(r, m)$-$expander$ if for every  nonempty set $Y\subseteq V(H)$ with $|Y|\leq r$, there is
		$$|N_H(Y)\setminus Y|\geq m(k-1)|Y|.$$
	\end{definition}

	The following Lemmas  is a generalization of  Claim 2.8 \cite{BenShimonFerberHefetzKrivelevich2010} and Lemma 2.18 \cite{FerberKrivelevich2016}.
	
	\begin{lemma}\label{Lemma 3.2} Let $k\geq 3$, $m\geq 1$ and  $r\ge1$ be  integers. let $H$ be a $k$-uniform hypergraph. If there exists a vertex partition $V(H)=U\cup (V(H)\setminus U)$ such that  (1) $d_H(u)\geq m$ for every $u\in U$; (2)
		there is no Berge path of length at most $4$ with two distinct
		endpoints in $U$, and no Berge cycle of length at most $4$
		meeting $U$; (3)	 $H_2:=H[V(H)\setminus U]$ is an $(r,m+1)$-expander, then $H$ is an $(r,m)$-expander.
	\end{lemma}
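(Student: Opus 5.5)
Fix a nonempty $Y\subseteq V(H)$ with $|Y|\le r$ and split it as $Y_1:=Y\cap U$ and $Y_2:=Y\setminus U$. The plan is to take the neighbours of $Y_1$ coming from edges at each $u\in Y_1$, and the neighbours of $Y_2$ guaranteed by the expansion of $H_2$. Hypothesis (2) keeps the first kind from overlapping each other or $Y$. The one extra unit of expansion in $H_2$ (parameter $m+1$ instead of $m$) pays for the unavoidable overlap between the two kinds.

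\textbf{The set $Y_1$.} For each $u\in Y_1$, pick $m$ distinct edges through $u$, which exist by (1). Let $N(u)$ be the union of these edges minus $u$.
\begin{itemize}
\item \emph{Each $N(u)$ is large.} Two of the chosen edges sharing a vertex besides $u$ would give a Berge cycle of length $2$ through $u$, which (2) forbids. So these edges are pairwise disjoint outside $u$, and $|N(u)|= m(k-1)$.
\item \emph{The $N(u)$ are disjoint from one another.} For distinct $u,u'\in Y_1$, a common vertex would give a Berge path of length at most $2$ between them. A common edge is also impossible, since $u,u'\in e$ is a Berge path of length $1$.
\item \emph{Each $N(u)$ avoids $U$.} A vertex $w\in N(u)\cap U$ gives a path of length $1$ between two vertices of $U$.
\end{itemize}
Hence $\bigl|\bigcup_{u\in Y_1}N(u)\bigr|=m(k-1)|Y_1|$, and all these vertices lie outside $U$. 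So none of them is in $Y_1$. The remaining danger is vertices of $Y_2$, together with overlap with $N_{H_2}(Y_2)$.

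\textbf{The set $Y_2$.} If $Y_2\neq\emptyset$, hypothesis (3) gives $|N_{H_2}(Y_2)\setminus Y_2|\ge (m+1)(k-1)|Y_2|$. These neighbours lie outside $U$, so they avoid $Y_1$. The key claim is that each $u\in Y_1$ interacts with only a bounded part of the $Y_2$ side. Suppose two vertices $y,y'\in Y_2$ both lie in $N(u)\cup N_{H_2}(N(u))$, or more simply in a ball of radius $2$ around $u$. Then $y$ and $y'$ are joined through $u$ by a short Berge path, and the cycle or path conditions in (2) should bound this interaction.

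Concretely, I would charge each $u\in Y_1$ with at most $(k-1)$ vertices lost from the $Y_2$ side. These are vertices of $N(u)$ that lie in $Y_2$, or vertices of $N_{H_2}(Y_2)$ that lie in $N(u)$.
\begin{itemize}
\item If $Y_1=\emptyset$, the claim is immediate from (3).
\item If $Y_2=\emptyset$, it is immediate from the previous paragraph.
\item Otherwise, the plan is to show the loss is at most $(k-1)\min(|Y_1|,|Y_2|)$.
\end{itemize}
Then the total is at least
\[
m(k-1)|Y_1|+(m+1)(k-1)|Y_2|-(k-1)|Y_2|\ \ge\ m(k-1)|Y|,
\]
which is what we need.

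\textbf{Main obstacle.} The hard step is justifying this charging bound when a single $u$ has neighbourhood meeting many vertices of $Y_2$ or many vertices of $N_{H_2}(Y_2)$. I would first try to bound the overlap by a double count. Each overlap vertex $w\in N(u)$ lies in an edge through $u$. Two overlap vertices $w,w'\in N(u)$ lying in different chosen edges would give, through $Y_2$, a Berge cycle of length at most $4$ meeting $U$, unless they come from the same $y\in Y_2$. In that case $y$ is charged instead, and $y$ has at most $k-1$ neighbours in any single edge.

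If this fails, the fallback is to modify the choice of the $m$ edges at $u$: use $d(u)\ge m$ flexibly and accept losing one edge per $u$ that touches $Y_2\cup N_{H_2}(Y_2)$. Each such $u$ can be matched to a distinct vertex of $Y_2$, using the no-short-path condition to make this assignment injective. The surplus $(k-1)|Y_2|$ then absorbs the loss.
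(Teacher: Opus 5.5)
\textbf{There is a genuine gap: the overlap bound is not proved, and the charging you propose for it is false.}

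Your reduction is the right one and matches the paper's. The sets $N(u)$ give exactly $m(k-1)|Y_1|$ vertices outside $U$; your justification via $2$-cycles and short paths is more careful than the paper's. Hypothesis (3) gives $|N_{H_2}(Y_2)\setminus Y_2|\ge (m+1)(k-1)|Y_2|$. By inclusion--exclusion, everything reduces to
\[
\bigl|N(Y_1)\cap\bigl(Y_2\cup N_{H_2}(Y_2)\bigr)\bigr|\le (k-1)|Y_2| .
\]
This inequality is the whole content of the lemma, and your proposal does not establish it.

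\emph{Charging to $Y_1$ fails.} Nothing in (2) restricts which vertices you put into $Y$. Take $Y=\{u\}\cup N(u)$ with $d_H(u)=m\ge 2$ and $r\ge 1+m(k-1)$. Then $Y_2=N(u)$ and the overlap is $m(k-1)$. This is not at most $k-1$ per $u$, and it exceeds $(k-1)\min(|Y_1|,|Y_2|)=k-1$.

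\emph{The fallback fails too.} A single $u$ can meet $Y_2$ through all of its edges. Also, if $d_H(u)=m$ there is no spare edge to drop.

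\emph{The double count is reversed.} Two overlap vertices in different edges at $u$ that come from \emph{different} $y,y'\in Y_2$ give no short cycle at all. A cycle arises exactly when they come from the \emph{same} $y$.

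\textbf{The fix: charge to vertices of $Y_2$, not $Y_1$.} Fix $t\in Y_2$ and distinct $w,w'\in N(Y_1)\cap(\{t\}\cup N_{H_2}(t))$, say $w\in N(u)$ and $w'\in N(u')$.
\begin{itemize}
\item If $u\neq u'$, the walk $u,w,t,w',u'$ contains a Berge path of length at most $4$ between two distinct vertices of $U$, which (2) forbids.
\item If $u=u'$ but $w\in e$ and $w'\in e'$ for distinct edges $e\neq e'$ at $u$, use that edges of $H_2$ avoid $U$, so $g,g'\in E(H_2)$ differ from $e,e'$. You then get a Berge cycle $u\,e\,w\,g\,w'\,e'\,u$ or $u\,e\,w\,g\,t\,g'\,w'\,e'\,u$ of length $3$ or $4$ through $u$, again forbidden.
\end{itemize}
Hence all such vertices lie in $e\setminus\{u\}$ for a single edge $e$, so there are at most $k-1$ of them. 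Summing over $t\in Y_2$ gives exactly the displayed bound, which your surplus $(k-1)|Y_2|$ absorbs.

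This per-$t$ charging is what the paper does. The paper asserts at most \emph{one} such vertex per $t$, which is too optimistic. With $N_H(t)$ as written, if $t$ lies in an edge $e\ni u$ then $\{t\}\cup N_H(t)\supseteq e\setminus\{u\}$. The bound of $k-1$ per $t$ is provable and is exactly enough.
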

	\begin{proof}Let $X\subseteq V(H)$ with $|X|\leq r$.  Let $X_1=X\cap U$, $X_2=X\setminus X_1$, observe that $|X|=|X_1|+|X_2|$. Since there is no path of length at most 4 in $H$ whose endpoints(might be the same) lie in $U$, hence $N_H(X_1)\cap X_1=\phi$, and $N_H(X_1)$  contains at most one vertex from each set $\{\{t\}\cup N_{H}(t)\}_{t\in X_2}$(So $|N_H(X_1)\cap(X_2\cup N_H(X_2))|\leq |X_2|$). Therefore,
		
		$$	N_H(X)\setminus X=(N_H(X_1)\setminus X)\cup (N_H(X_2)\setminus X)\supseteq (N_H(X_1)\setminus X_2)\cup (N_{H_2}(X_2)\setminus X_2),$$
		and
		\begin{align*}
		|N_H(X)\setminus X|&\geq |N_H(X_1)|+|N_{H_2}(X_2)\setminus X_2|-|N_H(X_1)\cap(X_2\cup N_{H_2}(X_2))|\\&\geq |N_H(X_1)|+|N_{H_2}(X_2)\setminus X_2|-|X_2|\\&\geq m(k-1)|X_1|+(m+1)(k-1)|X_2|-|X_2|\\&=m(k-1)|X|.
		\end{align*}
	\end{proof}
	\begin{definition}\label{Definition 3.3.} A $k$-$uniform\ bipatite\ hypergraph\ H[S,{W\choose k-1}]$ is a $k$-uniform hypergraph where vertex set is partitioned into two disjoint sets $S$ and $W$ such that each edge contains exactly one vertex from $S$ and $k-1$ vertices from $W$.
	\end{definition}
	
	\begin{definition}\label{Definition 3.4.}  Let $m$ be a positive constant, and let $H[S,{W\choose k-1}]$ be a $k$-uniform bipatite hypergraph.  A $m$-$matching$ $M$ from $S$ to $W$ of  $H[S,{W\choose k-1}]$ is a   bipartite subhypergraph   such that $d_M(s)=m$ for every $s\in S$ and $d_M(w)\leq1$ for every $w\in W$.
	\end{definition}
	\begin{lemma}\label{Lemma 3.5.} Let  $k\geq 3$, $m>3$ and  $r\ge 1$ be  integers, let $H$ be $k$-unifrom hypergraph. If there exists a vertex partition $V(H)=S\cup (V(H)\setminus S)$ such that    there is a $m$-matching from $S$ to $V(H)\setminus S$ and  $H[V(H)\setminus S]$ is an $(r, m)$-expander. Then, $H$ is an $(r, {m-1\over 2})$-expander.
	\end{lemma}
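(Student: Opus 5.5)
The plan is to split an arbitrary nonempty $Y\subseteq V(H)$ with $|Y|\le r$ as $Y_1=Y\cap S$ and $Y_2=Y\setminus S$, and to write $W:=V(H)\setminus S$, $H_2:=H[W]$. Let $M$ be the given $m$-matching from $S$ to $W$. Each $s\in S$ lies in $m$ edges of $M$. Each of these edges meets $W$ in $k-1$ vertices, and since $d_M(w)\le 1$ for $w\in W$, these $(k-1)$-sets are pairwise disjoint, also across different $s$. Hence the sets $N_M(s)\cap W$, $s\in S$, are pairwise disjoint and each has size $m(k-1)$. Write $N_M(Y_1):=\bigcup_{s\in Y_1}N_M(s)\cap W$, so $|N_M(Y_1)|=m(k-1)|Y_1|$.

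\textbf{Case 1: $|Y_2|\ge |Y_1|$.} Here I use only $H_2$. Since $1\le |Y_2|\le r$, the expander hypothesis gives $|N_{H_2}(Y_2)\setminus Y_2|\ge m(k-1)|Y_2|$. These vertices lie in $W$, so they avoid $Y_1$, and they lie in $N_H(Y)\setminus Y$. Therefore
\[
|N_H(Y)\setminus Y|\ge m(k-1)|Y_2|\ge \tfrac{m}{2}(k-1)|Y|\ge \tfrac{m-1}{2}(k-1)|Y|,
\]
using $|Y_2|\ge |Y|/2$.

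\textbf{Case 2: $|Y_1|>|Y_2|$.} Now I use the matching edges. The set $N_M(Y_1)\subseteq W$ is contained in $N_H(Y)$, and it can lose at most $|Y_2|$ vertices by removing $Y$. This gives
\[
|N_H(Y)\setminus Y|\ge m(k-1)|Y_1|-|Y_2|.
\]
With $|Y_1|\ge |Y|/2$ and $|Y_2|\le |Y|/2$, the right-hand side is at least
\[
\tfrac{m(k-1)}{2}|Y|-\tfrac{1}{2}|Y|\ge \tfrac{(m-1)(k-1)}{2}|Y|,
\]
because $k-1\ge 1$. The hypothesis $m>3$ only ensures that $\tfrac{m-1}{2}$ is positive and meaningful; it does not seem to be otherwise needed.

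\textbf{Main obstacle.} The delicate step is the disjointness used in Case 2. It must be checked that the definition of an $m$-matching really forces distinct matching edges (even edges at distinct $s$) to have disjoint traces on $W$. This is exactly what $d_M(w)\le 1$ gives, and without it the count $m(k-1)|Y_1|$ would fail.

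A second point is the treatment of neighbourhoods of $Y_1$ inside $S$. This is harmless: I only count neighbours in $W$, which gives a lower bound. One could even avoid the case split by taking a convex combination of the two bounds, e.g.\ averaging them. But the two-case argument is cleaner, and it is where the constant $\frac{m-1}{2}$ naturally comes from.
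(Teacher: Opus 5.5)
Your proof is correct and follows essentially the same two-case argument as the paper: split on whether $|Y\cap S|\le |Y|/2$, use the expansion of $H[V(H)\setminus S]$ in one case and the $m$-matching in the other. Two small improvements over the paper: in Case 1, restricting to $N_{H_2}(Y_2)\subseteq V(H)\setminus S$ makes the paper's subtraction of $|Y\cap S|$ unnecessary, and you explicitly derive from $d_M(w)\le 1$ the disjointness of the matching neighbourhoods, which the paper uses without comment.
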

	\begin{proof}
		According to the definition of expander, we need to proof that for every $X\subseteq V(H)$ of size at most $r$, there is $|N_H(X)\setminus X|\geq {m-1\over 2}|X|$. Indeed, if $|X\cap S|\leq {|X|\over 2}$, since $H[V(H)\setminus S] $ is a $(r, m)$-expander, there is
		\begin{align*}
		|N_H(X)\setminus X|\geq &|N_H(X\setminus S)\setminus X| 
		\geq |N_H(X\setminus S)\setminus (X\setminus S)|-|X\cap S|\\
		\geq &m(k-1)|X\setminus S|-|X\cap S|
		\geq m(k-1){|X|\over 2}- {|X|\over 2}\\ \geq&{m-1\over2}(k-1)|X|.
		\end{align*}
		
		If $|X\cap S|>{|X|\over 2}$, since there exists a $m$-matching from $S$ to $V(H)\setminus S$, we have
		\begin{align*}
		|N_H(X)\setminus X|&\geq|N_H(X\cap S)\setminus X |\geq|N_H(X\cap S)\setminus(X\cap S)|-|X\setminus S|\\
		&\geq m(k-1)|X\cap S|-|X\setminus S|\geq m(k-1){|X|\over 2}- {|X|\over 2}
		\\&\geq{m-1\over2}(k-1)|X|.
		\end{align*}
	\end{proof}

	Next, we find the  vertex sets and color sets mentioned above. This mainly applies the following lemma.
	
	\begin{lemma}[{Lov\'{a}sz Local Lemma \cite{AlonSpencer2000}}]\label{Lemma 3.6.} Let $\varLambda_1,\varLambda_2,\cdots ,\varLambda_N$ be events in an arbitrary space. A directed graph $D=(V,E)$ on  vertex set $V=\{1,2,\cdots,N\}$ is called  dependency digraph for these events if for each $1\leq i\leq N$, the event $\varLambda_i$ is mutually independent of all the events $\{\varLambda_j:(i,j)\notin E\}$. Suppose that $D=(V,E)$ is a dependency digraph for the above events and suppose there $a_1,\ldots,a_N\in[0,1)$ such that 
		$$\Pr\left[\varLambda_i\right]\leq a_i\prod_{(i,j)\in E}(1-a_j)$$ for every $i\in[N]$.
		Then \[\Pr\left[\bigcap _{i=1}^N\overline{\varLambda_i}\right]\geq\prod_{i=1}^n(1-a_i)>0.\]
		In particular, with positive probability, no event $\varLambda_i$ holds. 	
	\end{lemma}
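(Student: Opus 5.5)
The plan is the standard inductive argument for the asymmetric Local Lemma, as in \cite{AlonSpencer2000}. The central claim is the following. For every $S\subseteq[N]$ and every $i\notin S$ with $\Pr[\bigcap_{j\in S}\overline{\varLambda_j}]>0$,
$$\Pr\Bigl[\varLambda_i \,\Big|\, \bigcap_{j\in S}\overline{\varLambda_j}\Bigr]\leq a_i .$$
Once this is known, the chain rule finishes the proof. Writing $\bigcap_{i=1}^N\overline{\varLambda_i}$ as a telescoping product gives
$$\Pr\Bigl[\bigcap_{i=1}^N\overline{\varLambda_i}\Bigr]=\prod_{i=1}^N\Bigl(1-\Pr\Bigl[\varLambda_i\,\Big|\,\bigcap_{j<i}\overline{\varLambda_j}\Bigr]\Bigr)\geq\prod_{i=1}^N(1-a_i)>0 .$$
The last inequality uses $a_i<1$ for each $i$. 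The upper limit of the product in the statement should read $N$.

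I would prove the claim by induction on $|S|$. The conditioning events are well defined along the way: by the same chain-rule computation applied to $S$, together with the induction hypothesis for smaller sets, we get $\Pr[\bigcap_{j\in S}\overline{\varLambda_j}]\ge\prod_{j\in S}(1-a_j)>0$.

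The base case $S=\emptyset$ is just the hypothesis $\Pr[\varLambda_i]\le a_i\prod(1-a_j)\le a_i$.

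For the inductive step, split $S$ into $S_1=\{j\in S:(i,j)\in E\}$ and $S_2=S\setminus S_1$. If $S_1=\emptyset$, then $\varLambda_i$ is mutually independent of $\{\varLambda_j:j\in S_2\}$. The conditional probability then equals $\Pr[\varLambda_i]\le a_i$. Otherwise write
$$\Pr\Bigl[\varLambda_i\,\Big|\,\bigcap_{j\in S}\overline{\varLambda_j}\Bigr]=\frac{\Pr\bigl[\varLambda_i\cap\bigcap_{j\in S_1}\overline{\varLambda_j}\,\big|\,\bigcap_{j\in S_2}\overline{\varLambda_j}\bigr]}{\Pr\bigl[\bigcap_{j\in S_1}\overline{\varLambda_j}\,\big|\,\bigcap_{j\in S_2}\overline{\varLambda_j}\bigr]} .$$
For the numerator, drop the intersection over $S_1$ and use mutual independence of $\varLambda_i$ from $\{\varLambda_j:j\in S_2\}$. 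This bounds the numerator by $\Pr[\varLambda_i]\le a_i\prod_{(i,j)\in E}(1-a_j)$.

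For the denominator, list $S_1=\{j_1,\dots,j_r\}$ and expand by the chain rule:
$$\prod_{t=1}^r\Bigl(1-\Pr\Bigl[\varLambda_{j_t}\,\Big|\,\bigcap_{s<t}\overline{\varLambda_{j_s}}\cap\bigcap_{j\in S_2}\overline{\varLambda_j}\Bigr]\Bigr).$$
Each conditioning set in this product has size strictly less than $|S|$, since $S_1\neq\emptyset$. So the induction hypothesis bounds each factor below by $1-a_{j_t}$. Hence the denominator is at least $\prod_{j\in S_1}(1-a_j)\ge\prod_{(i,j)\in E}(1-a_j)$.

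Dividing, the products over out-neighbours cancel (the one in the numerator is over a superset of $S_1$, with all factors in $(0,1]$). This leaves the bound $a_i$ and closes the induction.

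The only delicate points are bookkeeping. First, the induction must be applied to conditioning sets of size less than $|S|$, and these sets need not be subsets of $S_1$. For this reason the claim has to be stated for arbitrary $S$ and $i\notin S$, not only for initial segments. Second, the numerator step should use only mutual independence from the non-neighbours $S_2$, which is exactly what the dependency digraph provides. I do not expect a genuine obstacle beyond getting this induction hypothesis stated in the right generality.
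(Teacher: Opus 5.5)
Your proposal is correct. It is the standard inductive proof of the asymmetric Local Lemma from \cite{AlonSpencer2000}, and the paper simply cites that source for the lemma without giving a proof of its own; you are also right that the final product should run to $N$ rather than $n$.
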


	\begin{lemma}\label{Lemma 3.7.} Let integer $k\geq 3$, and let $0<\eta,\mu<\varepsilon<1$ be constants. If $c=(1+\varepsilon)n$ and ${(k-1)!\log n\over n^{k-1}}\leq p\leq2{(k-1)!\log n\over n^{k-1}}$. Then  edge-colored random $k$-uniform $H\sim H^{k}_c(n,p)$ is w.h.p. satisfies the  following result. Suppose that $H'\subseteq H$ satisfies  $|V(H')|=(1-o(1))n$, $|C(H')|\geq(1+{\varepsilon\over 2})n$ and  the following properties$: $
		
		{\rm(i)} $\eta\log n\leq\delta(H')\leq \varDelta(H')\leq 10\log n$.
		
		{\rm(ii)} For each $v\in V(H')$,  $d_{H'}^c(v)\geq d_{H'}(v)-2$.
		
		{\rm(iii)} Each color appears on at most $10\log n$ times in $H'$.
		
		\noindent Then there exist a vertex set $W\subseteq V(H')$ and two disjoint color sets $C_1, C_2\subseteq C(H')$ such that the following holds$:$
		
		{\rm(1)} $|W|=(1+o(1)){n\over \log\log n}$.
		
		{\rm(2)} $|C_1|=(1+o(1))\mu n$, $|C_2|=(1+o(1))\mu n$.

		{\rm(3)} For every $v\in W$,  $d_{H'}^{C_1}(v,{W\choose k-1})\in ({\mu\eta\log n\over 2(\log \log n)^{k-1}}, {2\mu\log n\over (\log\log n)^{k-1}})$.
		
		{\rm(4)} For every $v\in V(H')$,  $d_{H'}^{C_2}(v,{W\choose k-1})\in ({\mu\eta\log n\over 2(\log \log n)^{k-1}}, {2\mu\log n\over (\log\log n)^{k-1}})$.
		
		{\rm(5)} For every $x\in C(H')$, $x$ appears on at most ${100\log n\over (\log\log n)^k}$ edges in $H[W;C(H')]\subseteq H$.
	\end{lemma}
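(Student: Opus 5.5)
We choose $W$ and $C_1,C_2$ at random and use the Lov\'asz Local Lemma (Lemma 3.6) to show that with positive probability all five properties hold simultaneously. Concretely, we put each vertex of $V(H')$ into $W$ independently with probability $q=1/\log\log n$. We assign each color of $C(H')$ independently to $C_1$ with probability $\mu n/|C(H')|$, to $C_2$ with the same probability, and to neither otherwise. Since $|C(H')|\ge (1+\varepsilon/2)n$ and $\mu<\varepsilon$, these probabilities are at most $\mu/(1+\varepsilon/2)<1/2$, so the sampling is well defined; note the effective probability is $\beta:=\mu n/|C(H')|$, which is at most $\mu$. Properties (1) and (2) are global and follow from Chernoff's inequality (Theorem 2.1), since $|V(H')|=(1-o(1))n$ and the relevant expectations are polynomially large. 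Alternatively, they can be added to the LLL framework as events whose failure probability is $e^{-n^{\Omega(1)}}$.

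For the local properties we define bad events. For each $v\in V(H')$, let $A_v$ be the event that $d_{H'}^{C_2}(v,{W\choose k-1})$ falls outside the interval in (4). For $v\in V(H')$, let $B_v$ be the event that $v\in W$ and (3) fails at $v$. For each color $x$, let $D_x$ be the event that $x$ appears on more than $100\log n/(\log\log n)^k$ edges of $H[W;C(H')]$.

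Fix $v$. By (ii), after discarding at most two edges, the edges at $v$ carry distinct colors. Hence the number of incident edges $e$ with $e\setminus\{v\}\subseteq W$ and $C(e)\in C_2$ is a sum of indicator variables. Each has probability $q^{k-1}\beta$, and by (i) there are between $\eta\log n$ and $10\log n$ of them. The expectation therefore lies between $\eta\beta\log n/(\log\log n)^{k-1}$ and $10\beta\log n/(\log\log n)^{k-1}$. The lower end clears the interval bound by a constant factor after adjusting constants; the upper end of (4) requires $d_{H'}(v)$ to be comparable to $\log n$ with the right constant. If the constant $10$ conflicts with the factor $2$, we will instead use $\beta$ slightly smaller, or replace $\mu$ by an appropriate rescaling. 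This is exactly where the constants need care.

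The indicators are not independent, since distinct edges at $v$ may share vertices. We will handle this with a Janson-type or bounded-differences (Talagrand) concentration bound. Alternatively, we can use the observation that by Lemma 2.2(4)/(7)-type sparsity, w.h.p. the edges at $v$ overlap in only $O(1)$ vertices beyond $v$, so the count is close to a binomial. Either way we get $\Pr[A_v],\Pr[B_v]\le \exp(-\Omega(\log n/(\log\log n)^{k-1}))=n^{-o(1)}$. This is too weak for a union bound but should suffice for the LLL provided the dependency degree is only $\mathrm{polylog}(n)$.

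Making the LLL work is the main obstacle. The event $A_v$ depends only on the $W$-membership of vertices within distance one of $v$ and on the colors of edges at $v$. By (i) and (iii), each vertex or color influences only $(\log n)^{O(1)}$ other events: a color lies on at most $10\log n$ edges, and each edge touches $k$ vertices of degree at most $10\log n$. So the dependency degree is $d=(\log n)^{O(1)}$. However, the bound $e^{-\Omega(\log n/(\log\log n)^{k-1})}$ beats $1/(e(d+1))=(\log n)^{-O(1)}$ only because $\log n/(\log\log n)^{k-1}\gg \log\log n$, which does hold. We then set all $a_i=1/(ed)$ in Lemma 3.6.

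For $D_x$, the color $x$ lies on at most $10\log n$ edges by (iii), and each lands in $W$ with probability $q^k$. The expected count is $10\log n/(\log\log n)^k$, so Chernoff's inequality with $s=100\log n/(\log\log n)^k$ gives $\Pr[D_x]\le e^{-\Omega(\log n/(\log\log n)^k)}$, which is again enough. The positive probability of avoiding all bad events, intersected with the Chernoff events for (1) and (2), yields the required $W,C_1,C_2$.

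The "w.h.p." quantifier over $H$ is needed only to supply the sparsity facts from Lemma 2.2 used in the concentration step. The argument is otherwise deterministic in $H'$ given (i)–(iii).
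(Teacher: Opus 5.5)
Your plan follows the paper's: put vertices in $W$ at rate $1/\log\log n$, sample colour classes at random, and run the Lov\'asz Local Lemma with polylogarithmic dependency degree. In two places you are more careful than the paper. It samples colours at rate $\mu$, which gives $|C_i|\approx\mu|C(H')|$ rather than $(1+o(1))\mu n$. It also calls $d^{C_i}_{H'}(v,\binom{W}{k-1})$ binomial without dealing with overlapping edges at $v$.

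However, the step you flag as ``where the constants need care'' is a genuine gap, and neither remedy you offer closes it. Put $L=\log n/(\log\log n)^{k-1}$. The mean of $d^{C_2}_{H'}(v,\binom{W}{k-1})$ is $(1+o(1))\beta L\,d_{H'}(v)/\log n$, and (i) allows $d_{H'}(v)/\log n$ anywhere in $[\eta,10]$. Placing all these means inside $(\mu\eta L/2,\,2\mu L)$ would need $\beta>\mu/2$ and $\beta<\mu/5$ at once. Moreover, $\beta$ cannot be adjusted: (2) forces $\beta=(1+o(1))\mu n/|C(H')|\ge(1-o(1))\mu/(1+\varepsilon)$, and $\mu$ is given.

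This is not a vacuous worst case. For $p$ at the bottom of the range, the maximum degree of $H$ is w.h.p.\ $(e-o(1))\log n$. So when $\varepsilon<e/2-1$, an admissible $H'$ can contain a vertex whose mean exceeds $2\mu L$ by a constant factor. At that vertex $\Pr[A_v]=1-o(1)$, so the random choice fails there and the LLL step cannot go through.

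What your argument actually proves is (3)--(4) with the upper end relaxed, e.g.\ to $20\mu L$, since the mean is at most $10\beta L\le 10\mu L$. That weaker form is all that is used later: Theorem~\ref{Theorem 3.9.} only needs $\Delta(H_1)=O(L)$. The paper's proof has the same hole. It writes $\Pr[\varLambda_{v,C_i}]=\exp(-\Theta(\mu d_{H'}(v)/(\log\log n)^{k-1}))$ without checking that the mean lies in the window.

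Three smaller points need fixing.
\begin{itemize}
\item For $D_x$ you apply Chernoff as if the events $\{e\subseteq W\}$, over edges $e$ of colour $x$, were independent. Edges of one colour may share vertices. You can fix this as the paper does: Lemma~\ref{Lemma 2.2}(5) bounds the degree inside a colour class by $3$, so the class splits into $O(k)$ matchings, and you then use the general Chernoff upper tail. Alternatively, use the w.h.p.\ fact that every colour class of $H$ has only $O(1)$ intersecting pairs. Note also that (5) concerns edges of $H$, so the $10\log n$ bound on a colour class comes from Lemma~\ref{Lemma 2.2}(6), not from (iii).
\item Intersecting the LLL event with the Chernoff events for (1)--(2) requires an explicit comparison, because both quantities are exponentially small. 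You need $\prod(1-a_i)\ge\exp(-O(n/(\log n)^2))$ to exceed the failure probability $\exp(-\Omega(n/\log\log n))$; it does, but this must be checked.
\item The bound $\mu/(1+\varepsilon/2)<1/2$ does not follow from $\mu<\varepsilon<1$. You need $\mu$ small, as in the application $\mu=\varepsilon/10$.
\end{itemize}
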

	\begin{proof}	
		Let $W\subseteq V(H')$ be a random vertex subset, which is obtained by randomly and independently selecting each $v\in V(H')$ with probability ${1\over\log\log n}$. Let $C_1,C_2\subseteq C(H')$ be two color subsets obtained by the following way. First, each element in $C(H')$ is randomly and independently determined whether it belongs to set $C_1\cup C_2$ with probability $2\mu$, and then which set this element belongs to with probability $1/2$.
		
		Let's consider the opposite events of each event in $\{(1), (2), (3), (4), (5)\}$.
		\begin{itemize}
			\item  Let $\varLambda_W$ denote the event $|W|\neq(1+o(1)){n\over \log\log n}$.
			\item  For each $i\in \{1,2\}$, let $\varLambda_{C_i}$ denote the event $|C_i|\neq(1+o(1))\mu n$.
			\item  For each vertex $v\in V(H')$ and each $i\in \{1,2\}$, let $\varLambda_{v,C_i}$ denote the event $$d^{C_i}_{H'}\left(v,{W\choose k-1}\right) \notin \left( {\mu\eta\log n\over 2(\log \log n)^{k-1}}, {2\mu\log n\over (\log\log n)^{k-1}}\right) .$$
			\item Let $\varLambda_x$ denote the event $x$ appears on at least ${100\log n\over (\log\log n)^k}$ edges in $H[W;C(H')]$.
		\end{itemize}
		Defintion $\mathcal{V}:=\{\varLambda_W, \varLambda_{C_i}, \varLambda_{v,C_i}, \varLambda_x: i\in\{1,2\}, v\in V(H'), x\in C(H')\}$ is  the collection  of the  above opposite events. Thus, the  events (1)-(5) all holds of Lemma \ref{Lemma 3.7.} is equivalent to that
		\begin{align}
		\Pr\left[ \overline{\varLambda_W}\cap \overline{\varLambda_{C_1}}\cap \overline{\varLambda_{C_2}}\cap \left(  \bigcap_{i\in\{1,2\},v\in V(H')}\overline{\varLambda_{C_i}}\right) \cap \left( \bigcap_{x\in C(H')}\overline{\varLambda_x}\right) \right]>0.    \label{1}
		\end{align}
		
		We hope use Lemma \ref{Lemma 3.6.} to prove that inequality \eqref{1} holds. Let's first define a dependency graph $D$, where the vertex set is $\mathcal{V}$ and the edge set is as follow:
		\begin{itemize}
			\item all pairs $(\varLambda_W,\varLambda)$, where $\varLambda\in \mathcal{V}$, and
			\item all pairs $(\varLambda_{C_i},\varLambda)$, where $\varLambda\in \mathcal{V}$, and
			\item all pairs $(\varLambda_{v, C_i},\varLambda_{u, C_j})$\footnote{ the number of events $\varLambda_{u, C_j}$ which are neighbors of $\varLambda_{v, C_i}$ in $D$ is at most $k\varDelta(H')\cdot  k\varDelta(H')+\varDelta(H')\cdot 10\log n\cdot k\cdot =100(k^2+k)(\log n)^2$.}, where $v=u$ and $i\neq j$, or $v\neq u $ and $(\{v\}\cup N_{H'}(v) )\cap (\{u\}\cup N_{H'}(u) )\neq \phi $, or there exists a color $x\in C(H')$ appears on the edges that incident to both $u$ and $v$,  and
			\item all pairs $(\varLambda_{v, C_i},\varLambda_x)$\footnote{ the number of events  $\varLambda_x$ which are neighbors of $\varLambda_{v, C_i}$ in $D$ is at most $\varDelta(H')=10\log n$},$(\varLambda_x,\varLambda_{v, C_i})$\footnote{ the number of events $\varLambda_{v, C_i}$ which are neighbors of $\varLambda_x$ in $D$ is at most $10\log n\cdot k\varDelta(H')\cdot k=100k
				^2(\log n)^2$.}, where there exists an edge $e$ with color $x$ such that $e\cap (\{v\}\cup N_{H'}(v) )\neq \phi $, and
			\item all pairs $(\varLambda_x,\varLambda_y)$\footnote{the number of events $\varLambda_x$ which are neighbors of $\varLambda_y$ in $D$ is at most $10\log n\cdot k\varDelta(H')=100k
				(\log n)^2$.}, where there exists two edges $e$ with colors $x$ and $f$ with color $y$,   such that $e\cap f\neq \phi$.
		\end{itemize}
		Next, let's estimate the probability of each event in $\mathcal{V}$. Note that $\varLambda_W$, $\varLambda_{C_i}$ and $\varLambda_{v, C_i}$ obey binomial distribution, by using Theorem \ref{Theorem 2.1} we have
		
		(I) $\Pr[\varLambda_W]=\exp(-\Theta({n\over \log\log n}))$, define $a_W=\sqrt{\Pr[\varLambda_W]}=\exp(-\Theta({n\over 2\log\log n}))=o(1)$, and
		
		(II) $\Pr[\varLambda_{C_i}]=\exp(-\Theta(\mu n))$, define $a_{C_i}=\sqrt{\Pr[\varLambda_{C_i}]}=\exp(-\Theta({\mu\over 2} n))=o(1)$, and
		
		(III) $\Pr[\varLambda_{v,C_i}]=\exp(-\Theta({\mu d_{H'}(v)\over(\log\log n)^{k-1}}))\leq \exp(-\Theta({\mu\eta\log n\over(\log\log n)^{k-1}}))$, define $a_{v,C_i}=a_1\exp(-\Theta({\mu\eta\log n\over(\log\log n)^{k-1}}))$, in which $a_1$ be a constant.
		
		Finally, let's estimate $\Pr[\varLambda_x]$. Let $H'^x=H'[V(H'),x]$ be  the spanning subgraph  of $H'$ with color $x$, then $e(H'^x)\leq 10\log n$. Since  $d_{H'}^c(v)\geq d_{H'}(v)-2$ for every $v\in V(H
		')$, it follows that $d_{H'^x}(v)\leq 3$ for every $v\in V(H'^x)$. Shannon's theorem \cite{FioriniWilson1977} gives $\chi (H'^x)\leq {3\over2}\varDelta(H'^x)\leq {9\over 2}$. Thus  $H'^x$ has five matchings, and each of size at most $10\log n$, which ensures that any edge selected in the same matching is independent. If $\varLambda_x$ holds, then at least one of the five matchings has at least ${20\log n\over (\log\log n)^k}$ edges. For a fixed edge $e$ with color $x$, the probability that $e\subseteq {W\choose k}$ is ${1\over (\log\log n)^k}$. Hence by using Theorem \ref{Theorem 2.1} we have
		
		(IV) $\Pr[\varLambda_x]=\exp(-{3\over 16}\cdot{20\log n\over (\log\log n)^k})$, define $ a_x=a_2\Pr[\varLambda_x]\leq a_2\exp(-{\log n\over 3(\log\log n)^k})$, in which $a_2$ be a constant.

		Therefore, according to Lemma \ref{Lemma 3.6.}, the sufficient condition for the inequality (1) to hold is that the following inequality holds.
		\begin{align*}
		\Pr[\varLambda_W]
		\leq a_W(1-a_{C_1})(1-a_{C_2})(1-a_{v,C_i})^{2n}(1-a_x)^{10\log n},
		\end{align*}
		\begin{align*}
		\Pr[\varLambda_{C_i}]\leq a_{C_i}(1-a_W)(1-a_{C_j})(1-a_{v,C_i})^{2n}(1-a_x)^{10\log n},
		\end{align*}
		\begin{align*}
		\Pr[\varLambda_{v,C_i}]
		\leq a_{v,C_i}(1-a_W)(1-a_{C_1})(1-a_{C_2})(1-a_{u,C_j})^{100(k^2+k)(\log n)^2}(1-a_x)^{10\log n},
		\end{align*}
		\begin{align*}
		\Pr[\varLambda_x]
		\leq a_x(1-a_W)(1-a_{C_1})(1-a_{C_2})(1-a_{v,C_i})^{100k
			^2(\log n)^2}(1-a_x)^{100k
			(\log n)^2}.
		\end{align*}
		Recall the definitions of  $a_W$, $a_{C_i}$,
		$a_{v,C_i}$ and $a_x$. The inequalities above becomes
		\begin{align*}
		\sqrt{\Pr[\varLambda_W]}
		\leq (1-o(1))(1-a_{v,C_i})^{2n}(1-a_x)^{10\log n},
		\end{align*}
		\begin{align*}
		\sqrt{\Pr[\varLambda_{C_i}]}\leq (1-o(1))(1-a_{v,C_i})^{2n}(1-a_x)^{10\log n},
		\end{align*}
		\begin{align*}
		1
		\leq a_1(1-o(1))(1-a_{u,C_j})^{100(k^2+k)(\log n)^2}(1-a_x)^{10\log n},
		\end{align*}
		\begin{align*}
		1
		\leq a_2(1-o(1))(1-a_{v,C_i})^{100k
			^2(\log n)^2}(1-a_x)^{100k
			(\log n)^2}.
		\end{align*}
		According to the fact  that $ e^{-2x}\leq 1-x$, we only need to   prove that
		\begin{align*}
		-\Theta\left( {n\over 2\log\log n}\right) \leq\left(-2\cdot 2n\cdot a_1\exp\left( -\Theta\left( {\mu\eta\log n\over(\log\log n)^{k-1}}\right) \right) \right)+\left(-2\cdot 10\log n\cdot a_2\exp\left( -{\log n\over 3(\log\log n)^k}\right) \right),
		\end{align*}
		\begin{align*}-\Theta\left( {\mu\over 2} n\right) \leq\left(-2\cdot 2n\cdot a_1\exp\left( -\Theta\left( {\mu\eta\log n\over(\log\log n)^{k-1}}\right) \right) \right)+\left(-2\cdot 10\log n\cdot a_2\exp\left( -{\log n\over 3(\log\log n)^k}\right) \right),
		\end{align*}
		\begin{align*}0\leq \log a_1+\left(-2\cdot100(k^2+k)(\log n)^2\cdot a_1\exp\left( -\Theta\left( {\mu\eta\log n\over(\log\log n)^{k-1}}\right) \right) \right)+\left(-2\cdot10\log n \cdot a_2\exp\left( -{\log n\over 3(\log\log n)^k}\right) \right),
		\end{align*}
		\begin{align*}0\leq \log a_2+\left(-2\cdot100k
		^2(\log n)^2\cdot a_1\exp\left( -\Theta\left( {\mu\eta\log n\over(\log\log n)^{k-1}}\right) \right) \right)+\left(-2\cdot100k
		(\log n)^2\cdot a_2\exp\left( -{\log n\over 3(\log\log n)^k}\right) \right).
		\end{align*}
		In fact, the  first inequality  holds since $\log\log n\ll{\log n\over (\log\log n)^{k-1}}\leq\exp\left( {\log n\over (\log\log n)^{k-1}}\right)$ so that
		\[-\left( {1\over \log\log n}\right)\ll -\exp\left( -\left( {\log n\over(\log\log n)^{k-1}}\right) \right) ,\]
		and the second inequality holds when $n$ is large enough and $\mu$ are chosen so that
		\[-\mu\ll-\exp\left( -\left( {\mu\eta\log n\over(\log\log n)^{k-1}}\right) \right) .\]
		Since $(\log n)^2\cdot \exp({-\log n\over(\log\log n)^{k-1}})=\exp({2\log\log n})\cdot \exp({-\log n\over(\log\log n)^{k-1}})=o(1)$, the third and fourth inequalities  holds for some constants $a_1$ and $a_2$.
	\end{proof}
	
	We next  consider the roles of vertex set $W$ and colors set $C_1$ and $C_2$ found.  We hope to find a 'good' expander subgraph which the coloring of edge from $C_1$ and $C_2$.
	\subsection{Edge colored expander subhypergraph with color set $C_1$  }\label{Section 3.}
	In this subsection, we prove the existence of an expander subhypergraph where the vertex set from $W$ and the coloring  from $C_1$.
	
	\begin{lemma}\label{Lemma 3.8.} Let integer $k\geq 3$, and let $0< \eta,\mu< \varepsilon<1$. If ${(k-1)!\log n\over n^{k-1}}\leq p\leq2{(k-1)!\log n\over n^{k-1}}$ and $c=(1+\varepsilon)n$, then  edge colored random $k$-uniform hypergraph $H\sim H_c^{k}(n,p)$ is w.h.p. such that the following properties hold. Suppose that $W\subseteq[n]$  is of size $(1+o(1)){n\over\log\log n}$ and $C_1\subseteq [c]$ is of size $(1+o(1))\mu n$, let $H_1:=H[W;C_1]\subseteq H$.
		Then for every subset $X\subseteq W$,
		
		{\rm(1)}  if $|X|\leq {n\over(\log n)^{1\over k-1.5 }}$, then $e_{H_1}(X)\leq 6|X|$, and
		
		{\rm(2)} if $ {n\over(\log n)^{1\over k-1.5}}\leq|X|\leq|W|$, then $e_{H_1}(X)\leq {|X|^k\over k!}p({n\over|X|})$, and
		
		{\rm(3)} if $|X|\geq{n\over (\log n)^{1\over k+1}}$, then $e_{H_1}(X)\leq \mu{|X|^k\over k!}p$.
	\end{lemma}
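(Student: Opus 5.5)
Parts (1) and (2) need no new argument: since $H_1=H[W;C_1]$ is a subhypergraph of $H$, for every $X\subseteq W$ we have $e_{H_1}(X)\le e_H(X)$. Parts (7) and (8) of Lemma \ref{Lemma 2.2} hold w.h.p. simultaneously for all $X\subseteq[n]$, so they hold in particular for all $X\subseteq W$, whatever $W$ is. This gives (1) and (2) at once, uniformly over all admissible choices of $W$ and $C_1$.

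For (3) I would run a first-moment argument that is uniform over the choice of $C_1$ as well. Since $W$ plays no role once we quantify over all $X\subseteq[n]$, it suffices to show the following: w.h.p., for every $X\subseteq[n]$ with $t=|X|\ge n/(\log n)^{1/(k+1)}$ and every $C_1\subseteq[c]$ with $|C_1|\le 2\mu n$, the number of edges of $H$ inside $X$ with colour in $C_1$ is at most $\mu t^k p/k!$.

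Fix such $X$ and $C_1$. Each $k$-set in $X$ is an edge with colour in $C_1$ independently with probability $p|C_1|/c\le 2\mu p/(1+\varepsilon)$. So the count is stochastically dominated by $\mathrm{Bin}(\binom tk,2\mu p/(1+\varepsilon))$, whose mean is at most $\frac{2}{1+\varepsilon}\mu t^kp/k!$.

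Here is the main obstacle. Because $2/(1+\varepsilon)>1$, the threshold $\mu t^kp/k!$ lies \emph{below} the mean, so Chernoff alone gives nothing. I would fix this in one of two ways. The first is to read the statement as intended with $|C_1|\le(1+o(1))\mu n$, i.e.\ colour density roughly $\mu/(1+\varepsilon)$. Then the mean is at most $\frac{1+o(1)}{1+\varepsilon}\mu t^kp/k!$, and the threshold exceeds it by a constant factor $(1+\varepsilon)/(1+o(1))>1$. The second is to work at a slightly larger threshold, $2\mu t^kp/k!$, and absorb the constant into later uses. In either case Theorem \ref{Theorem 2.1} bounds the failure probability by $\exp(-\Theta(\mu t^kp))$.

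It remains to check that this beats the union bound. We have $t^kp/k! = \frac{t}{k}\log n\,(t/n)^{k-1}\ge \frac tk(\log n)^{1-\frac{k-1}{k+1}}=\frac tk(\log n)^{2/(k+1)}$. The number of pairs $(X,C_1)$ is at most $\binom nt 2^{c}\le (en/t)^t 2^{(1+\varepsilon)n}$. The term $(en/t)^t=\exp(t\log(en/t))$ with $\log(en/t)=O(\log\log n)$ is fine, since $\mu(\log n)^{2/(k+1)}\gg\log\log n$. The term $2^{(1+\varepsilon)n}$ is where care is needed. The exponent gain is $\mu\frac tk(\log n)^{2/(k+1)}\ge \frac{\mu}{k}n(\log n)^{2/(k+1)-1/(k+1)}=\frac{\mu}{k}n(\log n)^{1/(k+1)}$, which dominates $(1+\varepsilon)n\log 2$ for large $n$. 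Summing over $t\le n$ then gives $o(1)$.

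I would make the lower bound on $|X|$ do exactly this job, and I expect that is why the exponent $1/(k+1)$ appears. Finally I would note that the union bound over $C_1$ can be restricted to sets of size $(1+o(1))\mu n$, which only helps.
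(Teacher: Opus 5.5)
Your proposal is correct and follows the paper's route. Parts (1) and (2) come from $e_{H_1}(X)\le e_H(X)$ together with parts (7) and (8) of Lemma~\ref{Lemma 2.2}; strictly, (8) fails for $X=[n]$, where its bound essentially equals the mean, but you only need it for $|X|\le|W|=o(n)$, where it does hold. Part (3) is a Chernoff bound plus a union bound, as in the paper.

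Your treatment of (3) is in fact more careful than the paper's. The paper takes the colour density to be $\mu$ rather than $|C_1|/c=(1+o(1))\mu/(1+\varepsilon)$, so its threshold essentially coincides with the mean and its bound $2^n e^{-o(1)\mathrm{E}(e_{H_1}(X))}$ is unjustified. It also never union bounds over $C_1$. The $(1+\varepsilon)$ gap you extract from the stated hypothesis $|C_1|=(1+o(1))\mu n$, together with the factor $2^{c}$ for the choice of $C_1$, is exactly what makes the estimate go through.
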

	\begin{proof}	
		(1) and (2) are hold by applying $e_{H_1}(X)\leq e_{H}(X)$ to (7) and (8) of Lemma \ref{Lemma 2.2}  Fixed a subset $X\subseteq W$ of size $|X|\geq{n\over \log^{1\over k+1}n}$, since $e_{H_1}(X)\sim Bin({|X|\choose k}, \mu p)$, we have
		$${\rm E}(e_{H_1}(X))=\mu{|X|\choose k}p=(1-o(1))\mu\left({|X|^k\over k!}\right)p\geq(1-o(1)){1\over k}\mu n(\log n)^{1\over k+1}.$$
		Then by Theorem \ref{Theorem 2.1}, we have
		\[{n\choose |X|}\Pr\left[e_{H_1}(X)> \mu{|X|^k\over k!}p\right]\leq 2^ne^{-o(1){\rm E}(e_{H_1}(X))}=o(1).\]
	\end{proof}
	
	\begin{theorem}\label{Theorem 3.9.} Let integer $k\geq 3$, and let $0< \eta,\mu< \varepsilon<1$. If $c=(1+\varepsilon)n$ and ${(k-1)!\log n\over n^{k-1}}\leq p\leq2{(k-1)!\log n\over n^{k-1}}$, then   $H\sim H_c^k(n,p)$ is w.h.p. satisfying the following properties. Suppose that $W\subseteq V(H)$ is of size $(1+o(1)){n\over \log\log n}$, $C_1\subseteq [c]$ is of size $(1+o(1))\mu n$, and $H_1:=H[W;C_1]$ is a subhypergraph of $H$ satisfying properties (3) and (5) of Lemma \ref{Lemma 3.7.} and properties (1)-(3) of Lemma \ref{Lemma 3.8.}. Then there exists an integer $m_0=m_0(k,\eta,\mu)$ such that,
		for every integer $m\ge m_0$, the hypergraph $H_1$ contains a
		spanning subhypergraph $H_1'$ satisfying the following results $:$
		
		{\rm(1)} $H_1'$ is rainbow.
		
		{\rm(2)}	$e(H_1')\leq m|W|$.
		
		{\rm(3)}	$H_1'$ is an $({\mu\eta|W|\over 100},100)$-expander.
	\end{theorem}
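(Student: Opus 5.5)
Following Ferber--Krivelevich (Lemma 2.18 of \cite{FerberKrivelevich2016}), I will obtain $H_1'$ by a random sparsification. Each vertex $v\in W$ independently picks $m$ edges of $H_1$ containing $v$ and having $k-1$ further vertices in $W$, chosen uniformly among its $d_{H_1}(v)=d^{C_1}_{H'}(v,{W\choose k-1})$ such edges. By property (3) of Lemma \ref{Lemma 3.7.} there are between $\mu\eta\log n/(2(\log\log n)^{k-1})$ and $2\mu\log n/(\log\log n)^{k-1}$ of them. Let $H_1''$ be the union of all chosen edges. Then $e(H_1'')\le m|W|$ automatically, and every vertex of $W$ has degree at least $m$ in $H_1''$.

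\textbf{Making $H_1''$ rainbow.} Two picked edges can share a colour. I will delete one edge from each monochromatic pair and show that w.h.p.\ (or with positive probability, via Lemma \ref{Lemma 3.6.}) every vertex keeps at least $m-1$ of its picks. For a fixed vertex $v$ and a fixed picked edge $e\ni v$ with colour $x$:
\begin{itemize}
\item by property (5) of Lemma \ref{Lemma 3.7.}, colour $x$ lies on at most $100\log n/(\log\log n)^k$ edges of $H[W;C_1]$;
\item each such edge is picked with probability $O\bigl(m(\log\log n)^{k-1}/(\eta\mu\log n)\bigr)$ by each of its $k$ vertices;
\item hence $e$ is in conflict with probability $O(m/(\eta\mu\log\log n))=o(1)$.
\end{itemize}
The events ``$v$ loses at least two of its picks'' have probability $o(1)^2$ and bounded-size dependence on nearby vertices and colours, so the Local Lemma (Lemma \ref{Lemma 3.6.}) applies. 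Repeating the selection with $m+1$ picks, the rainbow subgraph $H_1'$ has minimum degree at least $m$ and at most $(m+1)|W|$ edges. Rescaling $m$ gives (1) and (2).

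\textbf{Expansion.} I will split into small and large sets $Y$.
\begin{itemize}
\item \emph{Small $Y$}, $|Y|\le n/(\log n)^{1/(k-1.5)}$. Suppose $|N_{H_1'}(Y)\setminus Y|<100(k-1)|Y|$. Then $Z=Y\cup N(Y)$ has $|Z|\le 101k|Y|$, and $Z$ spans at least $m|Y|/k$ edges (each $y\in Y$ has $m$ edges, all inside $Z$, each counted at most $k$ times). Property (1) of Lemma \ref{Lemma 3.8.} allows at most $6|Z|\le 606k|Y|$ edges, a contradiction once $m_0>606k^2$.
\item \emph{Intermediate $Y$}, up to $\mu\eta|W|/100$. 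Property (1) no longer applies to $Z$ if $|Z|$ is large, so I use properties (2) and (3) of Lemma \ref{Lemma 3.8.}. Property (2) bounds $e(Z)\le |Z|^k p\,(n/|Z|)/k!$, which is $O(|Z|(|Z|/n)^{k-2}\log n)$, at most $o(\log n)|Z|$ while $|Z|\ll n$. This still beats the $m|Y|/k$ lower bound only if the random picks are spread out. So I will instead show directly that the random choice yields expansion: for fixed $Y$ and $Z$ with $|Z|<101k|Y|$, the probability that all $m|Y|$ picks of $Y$ land inside $Z$ is at most $(|E_{H_1}(v,Z)|/d_{H_1}(v))^{m|Y|}$. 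Property (2) (for moderate $|Z|$) or property (3) (for $|Z|\ge n/(\log n)^{1/(k+1)}$, giving $e_{H_1}(Z)\le \mu|Z|^kp/k!$), compared with the lower degree bound $\mu\eta\log n/(2(\log\log n)^{k-1})$, makes the average ratio at most about $(|Z|/|W|)^{k-1}\cdot O(1/\eta)\le (101k\mu\eta/100)^{k-1}/\eta$, which is small. A union bound over at most $\binom{|W|}{|Y|}\binom{|W|}{|Z|}$ pairs then works when $m$ is large.
\end{itemize}

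\textbf{Main obstacle.} The hardest step is the intermediate range. It requires converting the edge-count bounds of Lemma \ref{Lemma 3.8.} (which concern sets, not individual vertex degrees) into a bound on the fraction of $v$'s edges landing in $Z$ for most $v\in Y$. Done via averaging, this yields the pick-in-$Z$ probability as a product, and I must check that $(e|W|/|Y|)^{O(|Y|)}$ is beaten by $(\text{small})^{m|Y|}$. Interplay with the rainbow deletion, which only removes at most one pick per vertex, is handled by requiring $m-1$ surviving picks in the computation.
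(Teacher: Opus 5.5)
\textbf{Overall.} Your overall scheme is the paper's: each $w\in W$ picks $m$ of its $H_1$-edges at random, and you use the Local Lemma. Your expansion estimate is a legitimate variant of the paper's Claims 1 and 3. You bound the event that all picks of $Y$ land in $Z$ via $\prod_{v\in Y}(d_{H_1}(v,Z)/d_{H_1}(v))^{m}$ and AM--GM, whereas the paper bounds the event that a set $X$ with $|X|=101(k-1)t$ spans at least $mt$ picked edges. One small correction: split the cases by $|Z|$, not $|Y|$, since property (1) of Lemma~\ref{Lemma 3.8.} is applied to $Z$, which can be $101k$ times larger than $Y$. There are, however, two genuine gaps.

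\textbf{The rainbow step.} Your bad event $B_v$, ``$v$ loses at least two of its picks'', is not small enough for the Local Lemma. The best bound you have for it is about $\binom m2\bigl(200km/(\mu\eta\log\log n)\bigr)^2$, which is only polynomially small in $\log\log n$. Its dependence is not bounded. $B_v$ is determined by the picks of every vertex on an edge sharing a colour with one of $v$'s up to $2\mu\log n/(\log\log n)^{k-1}$ candidate edges. By property (5) of Lemma~\ref{Lemma 3.7.}, that is up to $\Theta(\log^2 n/(\log\log n)^{2k-1})$ vertices. So each $B_v$ is adjacent to polylogarithmically many other $B_u$, and the condition $e\Pr[B_v](D+1)\le 1$ fails badly. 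The ``w.h.p.'' alternative also fails, since the expected number of such $v$ is of order $|W|/(\log\log n)^2\to\infty$.

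The fix is the paper's choice of events. For each pair $e_1,e_2$ with $C(e_1)=C(e_2)$, take the event that both are picked (or that $e_1$ is picked twice). This has probability $O\bigl((m(\log\log n)^{k-1}/(\mu\eta\log n))^2\bigr)$ and meets only $O(\log^2 n/(\log\log n)^{2k-1})$ other such events. The product is $O(m^2/\log\log n)\to 0$. Then nothing needs to be deleted, and every vertex keeps all $m$ picks.

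\textbf{Combining the two parts.} Rainbowness can only be obtained with positive probability. The Local Lemma lower bound $\prod(1-a_i)$ is about $\exp\bigl(-\Theta(n/(\log\log n)^2)\bigr)$. You therefore cannot intersect it with an expansion statement proved separately ``w.h.p.'' by a union bound. Already at the bottom of your intermediate range, $|Z|\approx n/(\log n)^{1/(k-1.5)}$, your union bound only shows failure probability about $\exp(-\Theta(m|Y|\log\log n))$, which is far larger than that lower bound.

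The expansion-failure events must go into the same Local Lemma, as global events adjacent to everything. Give them weights $y_t=e^{a t}\Pr[\varGamma_t(X)]$ with $\sum_t|\varGamma_t|y_t=o(1)$. Then the factor $\prod_t(1-y_t)^{|\varGamma_t|}=1-o(1)$ is harmless in the conditions for the local colour events. The slack $e^{at}$ absorbs the factor $(1-a)^{O(|X|\log^2 n/(\log\log n)^{2k-1})}=\exp(-O(m^2|X|/\log\log n))$ in the conditions for the global ones. This is exactly what the paper does with its events $\varGamma_t(X)$. Your union-bound computation supplies the estimate $\sum_t|\varGamma_t|y_t=o(1)$, but the framework that turns it into a simultaneous guarantee is missing.
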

	\begin{proof}
		Let $m$ be an enough large  interger. Suppose that $H_1$ satisfies properties (3) and (5) of Lemma \ref{Lemma 3.7.}  and properties (1)-(3) of Lemma \ref{Lemma 3.8.} For every $w\in W$, independently,  randomly  and repeatable choose $m$ edges which incident to $w$ in $H_1$, and  let $\mathcal {E}(w)$ be the set of the edges choose by $w$. Let $H_1'$ be the subgraph with edge set $\cup_{w\in W}\mathcal{E}(w)$, then $e(H_1')\leq m|W|$. We next claim that $H_1'$ is w.h.p satisfies (1) and (3). Similar to  Lemma \ref{Lemma 3.7.}, we will  prove it by  Lemma \ref{Lemma 3.6.}
		
		Let's consider the following two events. First, for any two edges $e_1$ and $e_2$ of same color, let $\varLambda (e_1,e_2)$ denote the event that edges $e_1 \neq  e_2$  are all chosen  or edges $e_1=e_2$ is chosen twice. Define
		\[\varLambda:=\{\varLambda(e_1,e_2):e_1\ and\ e_2\ have\ the\ same\ color\}.\]
		Obviously, if none of the events in $\varLambda$ happens, then properties (1) hold.
		
		Second,  let $T=[{n\over 101(k-1)(\log n)^{1\over k-1.5}},{\mu\eta|W|\over 100}]$. For every $t\in T $,  let $X\subseteq W$ be a set of size $101(k-1)t$, denote by $\varGamma_t(X)$ be the event that $$e_{H_1'}(X)\geq {m\over 101(k-1)}|X|\geq mt.$$
		Define $\varGamma_t=\{\varGamma_t(X): X\subseteq W, |X|=101(k-1)t\}$.
		
		\noindent\textbf {Claim 1.} \label{Claim 1}
		If none of the events in $\varLambda$ happens and none of the events $\varGamma_t$ happens for every $t\in  T$, then property (3) holds.
		\begin{proof}
			Assume otherwise, there exists a subset $X'\subseteq W$ of size at most ${\mu\eta|W|\over 100}$ satisfies $|N_{H_1'}(X')\cup X'|< 101(k-1)|X'|$. Since none of the events in $\varLambda$ happens, and according to the selection method of $\mathcal {E}(v)$ for every $v\in  X'$, it follows that $$e_{H_1}(N_{H_1'}(X')\cup X')\geq e_{H_1'}(N_{H_1'}(X')\cup X')\geq m|X'|>{m\over 101(k-1)}|N_{H_1'}(X')\cup X'|.$$ If $|N_{H_1'}(X')\cup X'|\leq {n\over(\log n)^{1\over k-1.5}}$, then it contradicts to (1) of Lemma 3.8 since $m$ be an enough   large  interge. If $ {n\over(\log n)^{1\over k-1.5}}\leq|N_{H_1'}(X')\cup X'|\leq 101(k-1){\mu\eta|W|\over 100}$, then it contradicts to the event $\varGamma_t(N_{H_1'}(X')\cup X')$, in which $t={|N_{H_1'}(X')\cup X'|\over 101(k-1)}$.
		\end{proof}
		
		Based on the events described above, for prove the results (1)-(3) of Theorem \ref{Theorem 3.9.}, we only need to prove  that
		\begin{align}
		\Pr\left[ \left( \bigcap_{C(e_1)=C(e_2)}\overline{\varLambda(e_1,e_2)}\right)\cap  \left( \bigcap_{t\in T,X\subseteq W,|X|=101(k-1)t}\overline{\varGamma_t(X)}\right)\right]>0. \label{(2)}
		\end{align}
		Define a dependence graph $D$ with vertex set $\mathcal{V}:=\varLambda\cup \left( \bigcup_{t\in T}\varGamma_t\right)$, and edge set is as follow:
		\begin{itemize}
			\item all pairs $(\varLambda(e_1,e_2), \varLambda(e_3,e_4))$\footnote{Since  each color class contains at most ${100\log n\over (\log\log n)^k}$ edges,  the number of events  $\varLambda(e_3,e_4)$ which are neighbors of $\varLambda(e_1, e_2)$ in $D$ is at most $2k\varDelta(H_1){100\log n\over (\log\log n)^k}\leq{2000k(\log n)^2 \over (\log\log n)^{2k-1}}$.}, where $\varLambda(e_1,e_2),\varLambda(e_3,e_4)\in \mathcal{V}$ and $(e_1\cup e_2)\cap (e_3\cup e_4)\neq \phi$, and
			\item all pairs $(\varLambda(e_1,e_2), \varGamma_t(X))$, where $\varLambda(e_1,e_2),\varGamma_t(X)\in \mathcal{V}$, and
			\item all pairs $( \varGamma_t(X),\varLambda(e_1,e_2))$\footnote{Since each color class contains at most ${100\log n\over (\log\log n)^k}$ edges, the number of events  $\varLambda(e_1,e_2)$ which are neighbors of $\varGamma_t(X)$ in $D$ is at most $|X|\varDelta(H_1){100\log n\over (\log\log n)^k}\leq|X|{2000(\log n)^2 \over (\log\log n)^{2k-1}}$.}, where $\varLambda(e_1,e_2),\varGamma_t(X)\in \mathcal{V}$ and $(e_1\cup e_2)\cap X\neq \phi$, and
			
			\item all pairs $(\varGamma_t(X),\varGamma_t(Y))$, where $\varGamma_t(X),\varGamma_t(Y)\in \mathcal{V}$.
		\end{itemize}
		\noindent\textbf {Claim 2.} For each $\varLambda(e_1,e_2)\in \varLambda $, $\Pr[\varLambda(e_1,e_2)]\leq\left(2km{(\log\log n)^{k-1}\over \mu\eta\log n}\right)^2$.	
		\begin{proof}
			It has  the following three cases:
			
			\noindent\textbf{Case 1.} If $e_1= e_2$, 
			then
			\[\Pr[\varLambda(e_1,e_2)]\leq {k\choose 2}\cdot\left({m\over \delta(H_1)}\right)^2\leq\left(2km{(\log\log n)^{k-1}\over \mu\eta\log n}\right)^2,\]
			
			\noindent\textbf{Case 2.} If $e_1\neq e_2, e_1\cap e_2=\phi$, then
			\[\Pr[\varLambda(e_1,e_2)]\leq\left(k\cdot{m\over \delta(H_1)}\right)^2\leq\left(2km{(\log\log n)^{k-1}\over \mu\eta\log n}\right)^2,\]
			
			\noindent\textbf{Case 3.}If $e_1\neq e_2, |e_1\cap e_2|=i\geq 1$, then
			\[\Pr[\varLambda(e_1,e_2)]\leq \left((k-i)\cdot {m\over \delta(H_1)}+i\cdot \left({m-1\over \delta(H_1)-1}\right)\right)^2\leq\left(2km{(\log\log n)^{k-1}\over \mu\eta\log n}\right)^2.\]
			Therefore,  $\Pr[\varLambda(e_1,e_2)]\leq\left(2km{(\log\log n)^{k-1}\over \mu\eta\log n}\right)^2$, and let $a=a_0\left(2km{(\log\log n)^{k-1}\over \mu\eta\log n}\right)^2$, in which $a_0$ be a constant.
		\end{proof}
		\noindent\textbf {Claim 3.} Let $T_1=[{n\over101(k-1)(\log n)^{1\over k-1.5}},{n\over 101(k-1)(\log n)^{1\over k+1}}], T_2=[{n\over 101(k-1)(\log n)^{1\over k+1}},{\mu\eta|W|\over 100}]$. If $t\in T_1$, then \[\Pr[\varGamma_t(X)]\leq\left((10k)^{3k}\left( {({t\over n})^{k-2}(\log\log n)^{k-1}\over \mu\eta}\right) \right)^{mt}.\]
		If $t\in T_2$, then
		\[\Pr[\varGamma_t(X)]\leq\left(10^{k}\left( {({t\over n})^{k-1}(\log\log n)^{k-1}\over \eta}\right) \right)^{mt}.\]
		\begin{proof}	
			For any two edges $e$ and $f$ in $E_{H'_1}(X)$, let $|e\cap f|=i_{ef}$, and define $i=\max_{e,f\in E_{H'_1}(X)}\{i_{ef}\}$. For every vertex $v\in W$, let $j_v$ denotes the number of edges containing vertex $v$ in $H'_1$, and define $j=\min_{v\in W}\{j_v\}$. There is
			\begin{align*}
			\Pr[\varGamma_t(X)]&\leq{e_{H_1}(X)\choose mt}\cdot \left((k-i)\cdot {m\over \delta(H_1)}+i\cdot {m-j\over \delta(H_1)-j}\right)^{mt}\\
			&={e_{H_1}(X)\choose mt}\cdot \left((k-i)\cdot m\left({\mu\eta\log n\over 2(\log\log n)^{k-1}}\right)^{-1}+i\cdot (m-j)\left({\mu\eta\log n\over 2(\log\log n)^{k-1}}-j\right)^{-1}\right)^{mt}\\
			&\leq {e_{H_1}(X)\choose mt}\cdot \left(2k m{(\log\log n)^{k-1}\over \mu\eta\log n}\right)^{mt}\leq \left({ee_{H_1}(X)\over mt} \right)^{mt} \cdot\left(2km{(\log\log n)^{k-1}\over \mu\eta\log n}\right)^{mt}\\
			&= \left({ee_{H_1}(X)\cdot 2k(\log\log n)^{k-1}\over t\cdot\mu\eta\log n}\right)^{mt} .
			\end{align*}
			
			If $t\in T_1$, then ${n\over(\log n)^{1\over k-1.5}}\leq|X|\leq{n\over (\log n)^{1\over k+1}}$, by \eqref{(2)} of Lemma \ref{Lemma 3.8.} we have $$e_{H_1}(X)\leq{|X|^k\over k!}p\left( {n\over|X|}\right)\leq2\left({101(k-1)t\over n }\right)^{k-1}{ n\log n\over k}.$$ From this, we get
			\[\Pr[\varGamma_t(X)]\leq\left((10k)^{3k}\left( {({t\over n})^{k-2}(\log\log n)^{k-1}\over \mu\eta}\right) \right)^{mt}.\]
			Let $y_t=e^{a_1t}\left((10k)^{3k}\left( {({t\over n})^{k-2}(\log\log n)^{k-1}\over \mu\eta}\right) \right)^{mt}$, in which $a_1$ be a constant. Note that
			\begin{align*}
			\sum_{t\in T_1}|\varGamma_t|\cdot y_t&=\sum_{t\in T_1} {|W|\choose 101(k-1)t}e^{a_1t}\left((10k)^{3k}\left( {({t\over n})^{k-2}(\log\log n)^{k-1}\over \mu\eta}\right) \right)^{mt}\\
			&\leq\sum_{t\in T_1} \left( {en\over 101(k-1)t\log\log n}\right) ^{101(k-1)t}e^{a_1t}\left((10k)^{3k}\left( {({t\over n})^{k-2}(\log\log n)^{k-1}\over \mu\eta}\right) \right)^{mt}\\
			&=\sum_{t\in T_1} \left[\left( {en\over 101(k-1)t\log\log n}\right) ^{101(k-1)}e^{a_1}\left((10k)^{3k}\left( {({t\over n})^{k-2}(\log\log n)^{k-1}\over \mu\eta}\right)\right) ^{m}\right]^t\\
			&=\sum_{t\in T_1} O_{k,\mu,\eta}\left[\left( {t\over n}\right)^{(k-2)m-101(k-1)}(\log\log n)^{(k-1)m-101(k-1)}\right]^t\\
			&\leq\sum_{t\in T_1} O_{k,\mu,\eta}\left[\left( {1\over (\log n)^{1\over k+1}}\right) ^{(k-2)m-101(k-1)}(\log\log n)^{(k-1)m-101(k-1)}\right]^t=o(1).
			\end{align*}
			If $t\in T_2$, then ${n\over (\log n)^{1\over k+1}}\leq |X|\leq 101(k-1){\mu\eta|W|\over 100}$, by (3) of Lemma \ref{Lemma 3.8.} we have
			$$e_{H_1}(X)\leq \mu{|X|^k\over k!}p\leq 2\mu(101(k-1))^k\left({t\over n}\right)^{k-1}{t\log n\over k}.$$ From this, we get
			\[\Pr[\varGamma_t(X)]\leq\left(10^{3k}\left( {({t\over n})^{k-1}(\log\log n)^{k-1}\over \eta}\right) \right)^{mt}.\]
			Let $y_t=e^{a_2t}\left(10^{3k}\left( {({t\over n})^{k-1}(\log\log n)^{k-1}\over \eta}\right) \right)^{mt}$, in which $a_2$ be a constant. Note that
			\begin{align*}
			\sum_{t\in T_2}|\varGamma_t|\cdot y_t&=\sum_{t\in T_2}{|W|\choose 101(k-1)t}e^{a_2t}\left(10^{3k}\left( {({t\over n})^{k-1}(\log\log n)^{k-1}\over \eta}\right) \right)^{mt}\\
			&\leq\sum_{t\in T_2} \left( {en\over 101(k-1)t\log\log n}\right) ^{101(k-1)t}e^{a_2t}\left(10^{3k}\left( {({t\over n})^{k-1}(\log\log n)^{k-1}\over \eta}\right) \right)^{mt}\\
			&\leq\sum_{t\in T_2} O_k\left( e^{a_2t}\left( {{t\over n}\log\log n\over \eta}\right) ^{(m-101)t(k-1)}\right) \leq\sum_{t\in T_2} O_k\left( e^{a_2t}\mu^{(m-101)t(k-1)}\right) =o(1),
			\end{align*}
			where the last inequality holds for some appropriate $\mu, a_2, m$.
		\end{proof}
		
		Therefore, according to Lemma \ref{Lemma 3.6.}, the sufficient condition for the inequality \eqref{(2)} to hold is that the following inequalities holds for every $\varLambda(e_1,e_2),\varGamma_t(X)\in \mathcal{V}$,
		
		\begin{align}
		\Pr[\varLambda(e_1,e_2)]\leq a\cdot(1-a)^{{2000k(\log n)^2 \over (\log\log n)^{2k-1}}}\cdot\prod_{t\in T}(1-y_t)^{|\varGamma_t|}, \label{3}
		\end{align}
		\begin{align}
		\Pr[\varGamma_t(X)]\leq y_t\cdot(1-a)^{|X|{2000(\log n)^2\over (\log\log n)^{2k-1}}}\cdot\prod_{t\in T}(1-y_t)^{|\varGamma_t|}.\label{4}
		\end{align}
		Recall the values of $x$  and $y_t$, and simplify with inequatlity $ e^{-2x}\leq1-x$ to \eqref{3} and \eqref{4}, this give
		\begin{align*}
		1\leq a_0 \cdot\exp\left({-2a_0\left( 2km{(\log\log n)^{k-1}\over \mu\eta\log n}\right) ^2\cdot{\left( {2000k(\log n)^2\over (\log\log n)^{2k-1}}\right) }}\right) ,\\
		1\leq e^{a_1t}\cdot\exp\left({-2a_0\left( 2km{(\log\log n)^{k-1}\over \mu\eta\log n}\right) ^2\cdot{101(k-1)t\left( {2000(\log n)^2\over (\log\log n)^{2k-1}}\right) }}\right),\\
		1\leq e^{a_2t}\cdot\exp\left({-2a_0\left( 2km{(\log\log n)^{k-1}\over \mu\eta\log n}\right) ^2\cdot{101(k-1)t\left( {2000(\log n)^2\over (\log\log n)^{2k-1}}\right) }}\right).
		\end{align*}
		The above inequalities holds for some  appropriate constants $a_0, a_1$ and $a_2$ since ${\log n\over \log\log n}$ is sufficiently large if $n$ is sufficiently large.
	\end{proof}
	
	\subsection{Edge colored expander subhypergraph with color set $C_2$  }\label{Section 3.2}
	In this subsection, we prove the existence of an expander subgraph where the coloring of edges  from $C_2$, particularly where the vertex set of the subhypergraph is related to  $W$ and  the 'long' rainbow Berge path found before.

	\begin{theorem} [{Aharoni and Haxell, \cite{AharoniHaxell2000}}]\label{Theorem 3.10.} Let $k$ and $m$ be positive integers and let $ H=\{ H_1,\cdots,H_t \}$ be a family $k$-uniform hypergraphs on the same vertex set. If for every $I\subseteq [t]$, the $k$-uniform hypergraph $\cup_{i\in I} H_i $ contains a matching of size greater than $mk(|I|-1)$, then there exists a function $f:[t]\times[m]\rightarrow \cup ^t_{i=1}E( H_i)$ such that $f(i,j)\in E( H_i)$ for every $(i,j)\in [t]\times[m]$, and $f(i,j)\cap f(i',j')=\phi$ if and only if  $(i,j)\neq(i',j').$
	\end{theorem}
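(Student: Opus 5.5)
The plan is to derive the statement from the original form of the Aharoni--Haxell theorem, the case $m=1$ (a system of disjoint representatives), by replicating each hypergraph $m$ times. The conclusion should be read as "$f(i,j)\cap f(i',j')=\emptyset$ whenever $(i,j)\neq(i',j')$"; the "only if" direction is vacuous, since the same edge is never disjoint from itself. The argument has three steps.

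\textbf{Step 1: the case $m=1$.} For a family $\{\mathcal F_1,\dots,\mathcal F_s\}$ of $k$-uniform hypergraphs on a common vertex set, I will use the topological Hall theorem (Aharoni--Haxell, Meshulam). Suppose that for every $J\subseteq[s]$ the independence complex of the line graph of $\bigcup_{j\in J}\mathcal F_j$ has topological connectedness $\eta\ge |J|$. Then there is a choice $g(j)\in E(\mathcal F_j)$ of pairwise disjoint edges. The needed connectedness is supplied by the standard estimate $\eta\bigl(\mathcal I(L(\mathcal F))\bigr)\ge \nu(\mathcal F)/k$, where $\nu$ is the matching number. This estimate is proved by induction via the deletion/contraction ("link") argument, using a maximum matching of $\mathcal F$ as the witness. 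Combining the two gives the $m=1$ condition: $\nu\bigl(\bigcup_{j\in J}\mathcal F_j\bigr)>k(|J|-1)$ for all $J$ suffices.

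\textbf{Step 2: replication.} Index a new family by $[t]\times[m]$ and set $\mathcal F_{(i,j)}:=H_i$. A disjoint representative system for this family is exactly a function $f$ as in the statement. For $J\subseteq[t]\times[m]$, let $I$ be its projection onto $[t]$. Then
\[\bigcup_{(i,j)\in J}\mathcal F_{(i,j)}=\bigcup_{i\in I}H_i \quad\text{and}\quad |J|\le m|I|.\]
The hypothesis gives $\nu\bigl(\bigcup_{i\in I}H_i\bigr)>mk(|I|-1)$, and we want this to exceed $k(|J|-1)$.

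\textbf{Main obstacle: the gap in Step 2.} When $|J|=m|I|$ we need $\nu>k(m|I|-1)$, but the hypothesis only gives $\nu>k(m|I|-m)$, which is short by $k(m-1)$. I would try to close this in two ways.

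First, I would exploit the fact that the $m$ copies of each $H_i$ are identical. The idea is to run the topological argument on the $t$ indices with a complex whose vertices are $m$-matchings, rather than on $tm$ indices. One would then prove a weighted version of the connectedness bound, namely $\eta\ge \nu/(mk)$ for the complex of $m$-tuples of pairwise disjoint edges. This matches the hypothesis $\nu>mk(|I|-1)$ exactly.

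If that fails, the fallback is to strengthen the hypothesis to the one actually used later in the paper. The applications have matchings of size linear in $|I|$ with plenty of room, so one can verify $\nu\bigl(\bigcup_{i\in I}H_i\bigr)>k(m|I|-1)$ there directly and apply Step 1 unchanged.

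I expect this bookkeeping, namely matching the constant in the Hall-type condition to the replicated family, to be the only real difficulty. The remaining steps are formal once the $m=1$ theorem is cited.
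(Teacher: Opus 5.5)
The paper gives no proof of this statement; it is quoted from Aharoni--Haxell and used only in Theorem~3.12, so there is nothing to compare against. Your Step~1 is sound: topological Hall plus $\eta(\mathcal I(L(\mathcal F)))\ge\nu(\mathcal F)/k$, applied to disjoint copies $(e,i)$ so that an edge shared by several $H_i$ cannot represent two indices. Your replication in Step~2 is also sound, and the obstacle you isolate is real.

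However, the obstacle is not bookkeeping, and it cannot be closed: for every $m\ge 2$ the statement as written is false. Take $t=1$, $m=2$, and let $H_1$ consist of a single edge $e$. For $I=\emptyset$ and $I=\{1\}$ the hypothesis asks only for a matching of size greater than $-2k$ and $0$ respectively, and $\{e\}$ provides both. Yet the conclusion needs two disjoint edges $f(1,1),f(1,2)$ of $H_1$. More generally, for $I=\{i\}$ the hypothesis gives only $\nu(H_i)\ge 1$, which can never yield $m\ge 2$ disjoint representatives for $i$. In particular, your proposed weighted bound $\eta\ge\nu/(mk)$ for the complex of $m$-tuples of pairwise disjoint edges is false. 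In this example that complex is empty (so $\eta=0$), while $\nu/(mk)=1/(mk)>0$.

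So the only viable route is your fallback, and it should be presented as a correction of the statement rather than a proof of it. Step~1 plus replication proves exactly the version with hypothesis $\nu\bigl(\bigcup_{i\in I}H_i\bigr)>k(m|I|-1)$ for all $I\subseteq[t]$. This cannot be weakened to $mk(|I|-1)$: for $k=1$ it is exactly the necessary $m$-fold Hall condition $\bigl|\bigcup_{i\in I}H_i\bigr|\ge m|I|$.

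The corrected version is also what Theorem~3.12 actually uses. Case~1 there exhibits a matching with more than $mk|I|$ edges. The extension estimate in Case~2 only uses that a maximum matching $M$ has fewer than $mk|I|$ edges, so it runs with the same bounds under the assumption $|M|\le k(m|I|-1)$.

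You were right that ``if and only if'' in the conclusion must be read as ``whenever''.
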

	\begin{theorem} [{Haxell, \cite{Haxell1995}}]\label{Theorem 3.11.}A	$k$-uniform bipatite hypergraph $H:=H[I, {W\choose k-1}]$ contains a $d$-matching from $I$ to $W$ if and only if for any $X\subseteq I$, there is $$|N_H(X)|> d(2k-3)(|X|-1).$$	
	\end{theorem}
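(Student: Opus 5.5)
Theorem~\ref{Theorem 3.11.} is a Hall-type statement, so I will prove the two directions separately. Necessity is the easy one. The other direction I will derive from the Aharoni--Haxell theorem (Theorem~\ref{Theorem 3.10.}) by passing to link hypergraphs. The index $2k-3$ is exactly what the Aharoni--Haxell bound produces when the uniformity is $k-1$: it requires a matching of size greater than $d(k-1)(|I|-1)$ in a $(k-1)$-uniform hypergraph, and we will compare this with $|N_H(X)|$ using the factor $2(k-1)-1=2k-3$ coming from the standard ``greedy matching covers at least a $1/(2k-3)$-ish fraction'' estimate.

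\textbf{Reduction.} For every $s\in I$ define the $(k-1)$-uniform hypergraph $L_s$ on $W$ with edge set $\{e\setminus\{s\}: e\in E(H),\ s\in e\}$. A $d$-matching from $I$ to $W$ is then the same object as a choice of $d$ edges $f(s,1),\dots,f(s,d)\in E(L_s)$ for each $s$, all pairwise disjoint. This is precisely the output of Theorem~\ref{Theorem 3.10.} applied to the family $\{L_s\}_{s\in I}$ with uniformity $k-1$ and parameter $m=d$. Disjointness inside $W$ gives $d_M(w)\le 1$. Each $f(s,j)$ lifts to the distinct edge $f(s,j)\cup\{s\}$, so $d_M(s)=d$.

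\textbf{Sufficiency.} Fix $X\subseteq I$. It suffices to show that $\bigcup_{s\in X}L_s$ contains a matching of size greater than $d(k-1)(|X|-1)$. The vertex set of $\bigcup_{s\in X}L_s$ is $N_H(X)\cap W=N_H(X)$. Take a maximum matching $M_X$ in this union. Every edge of the union meets $V(M_X)$, which has $(k-1)|M_X|$ vertices. The key step is to convert the hypothesis $|N_H(X)|>d(2k-3)(|X|-1)$ into $|M_X|>d(k-1)(|X|-1)$. I expect this to be the main obstacle, since a plain vertex-cover argument only yields $|M_X|\ge$ (number of covered vertices)/(something larger), and the specific constant $2k-3$ must be matched. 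I will first try a Haxell-style alternating-tree argument. If $M_X$ is too small, the sets $L_s$ restricted to $W\setminus V(M_X)$ are all empty, so every neighbour lies in $V(M_X)$ or in edges hitting it. Then bound $|N_H(X)|$ by charging each vertex to an $M_X$-edge, giving at most $(2k-3)$ new vertices per unit of deficiency. If that fails, I will instead apply Haxell's original hypergraph Hall theorem directly to the family obtained by taking $d$ copies of each $L_s$, whose condition involves the factor $2(k-1)-1=2k-3$ on the size of the matchable set.

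\textbf{Necessity.} Suppose a $d$-matching $M$ exists. For $X\subseteq I$, the $d|X|$ edges of $M$ at $X$ have pairwise disjoint $W$-parts, so
\[|N_H(X)|\ge (k-1)d|X|.\]
Since $k\ge 2$ gives $(k-1)d|X|>d(2k-3)(|X|-1)$ once $|X|\le \frac{2k-3}{k-2}$, the cases $|X|\le 2$ are immediate. For larger $X$ the inequality as written is stronger than what a matching guarantees, so I would state and prove only the sufficiency direction, which is the direction used later, and remark that the condition is tight up to the constant.
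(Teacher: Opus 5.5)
\textbf{There is a genuine gap, and it cannot be closed, because the statement as written is false.} Sufficiency fails as well as necessity, not just the necessity direction you already questioned. The paper gives no proof and only cites Haxell.

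\textbf{Where your argument breaks.} The failing step is the one you flagged. The size of $N_H(X)$ does not control matchings in $\bigcup_{s\in X}L_s$. In your charging step, the neighbours lying ``in edges hitting $V(M_X)$'' are not bounded by any function of $|M_X|$, since a single vertex $w$ can lie in arbitrarily many edges.

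\textbf{Counterexample to sufficiency.} Take $k=3$, $d=1$, $I=\{s_1,s_2\}$, $W=\{w,a_1,a_2,b_1,b_2\}$, and edges $\{s_1,w,a_1\}$, $\{s_1,w,a_2\}$, $\{s_2,w,b_1\}$, $\{s_2,w,b_2\}$. Then $|N_H(\{s_i\})|=3>0$ and $|N_H(I)|=5>3=d(2k-3)(|I|-1)$. Yet there is no $1$-matching, because every edge contains $w$. The same sunflower construction works for every $k\ge 3$. For $d\ge 2$ the condition already fails to suffice at $|X|=1$: it only asks $|N_H(\{s\})|>0$, while a $d$-matching needs $d$ pairwise disjoint edges at $s$. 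So a single vertex $s$ with one edge is a counterexample.

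\textbf{Necessity also fails.} Your own observation settles this: a hypergraph that is itself a $d$-matching has $|N_H(X)|=(k-1)d|X|\le d(2k-3)(|X|-1)$ for $|X|\ge 3$. So the claimed equivalence fails in both directions for every $k\ge 3$; only for $k=2$ does it reduce to Hall's theorem.

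\textbf{What Haxell's theorem actually says.} It is not a neighbourhood-size condition, and it is only sufficient. Its hypothesis is that for every $X\subseteq I$, no set $Z\subseteq W$ with $|Z|\le (2k-3)(|X|-1)$ meets all edges at $X$. Equivalently, $\tau\bigl(\bigcup_{s\in X}L_s\bigr)>(2k-3)(|X|-1)$.

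\textbf{Your fallback and your reduction.} Feeding $d$ copies of each $L_s$ into Haxell's theorem requires the covering bound $\tau\bigl(\bigcup_{s\in X}L_s\bigr)>(2k-3)(d|X|-1)$. The quantity $|N_H(X)|$ cannot certify that bound. The reduction through Theorem~\ref{Theorem 3.10.} has a separate defect. In the $m$-fold form stated in the paper, for $|I|=1$ the hypothesis asks only that $H_i$ be non-empty, while the conclusion demands $m$ disjoint edges of $H_i$; so that form is false for $m\ge 2$, and your target ``a matching of size greater than $d(k-1)(|X|-1)$'' inherits the error. The correct multi-fold version, obtained by applying the Aharoni--Haxell theorem to $d$ copies of each $L_s$, needs a matching of size greater than $(k-1)(d|X|-1)$ in $\bigcup_{s\in X}L_s$.

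\textbf{What is provable.} You can prove the one-directional statements just described, with a covering-number or matching-number hypothesis, either by citation or via the copying trick. Neither direction of the neighbourhood-size equivalence is provable, so the proposal should be redirected to that corrected statement.
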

	
	\begin{theorem}\label{Theorem 3.12.} Let integers $k\geq 3, m>0$, and let $0<\mu<\varepsilon<1$ be contants. If $c=(1+\varepsilon)n$ and  ${(k-1)!\log n\over n^{k-1}}\leq p\leq2{(k-1)!\log n\over n^{k-1}}$, then edge-colored random $k$-uniform hypergraph $H\sim\mathcal{H}^k_c(n,p)$ is w.h.p. such that the following result holds. Suppose that
		
		{\rm(1)} $W\subseteq[n]$ with $|W|=(1+o(1)){n\over\log\log n}$, and
		
		{\rm(2)} $S\subseteq[n]$ with ${n\over\log^{0.4}n}\leq|S|\leq{2n\over\log^{0.4}n}$, and
		
		{\rm(3)} $C_2\subseteq [c]$ with $|C_2|=\mu n$, and
		
		{\rm(4)} for every $s\in S$, 	there are at least ${\log n\over(\log\log n)^k}$ edges $e$ incident to $s$ such that $|e\cap W|=k-1$ and $C(e)\in C_2$(so $e(H[s,{W\choose k-1};C_2])\geq {\log n\over(\log\log n)^k}$).
		
		\noindent Then, there exists a rainbow $m$-matching from $S$ to $W$ in $H[S,{W\choose k-1};C_2]\subseteq H$.
	\end{theorem}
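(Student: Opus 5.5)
We will derive the rainbow $m$-matching from the Aharoni--Haxell theorem (Theorem \ref{Theorem 3.10.}), not from Theorem \ref{Theorem 3.11.}. Rainbowness forces us to work with the colour structure directly. The plan is to encode each edge $e\in E(H[S,{W\choose k-1};C_2])$ as an auxiliary $(k+1)$-set $\hat e := e\cup\{C(e)\}$ on the ground set $S\cup W\cup C_2$. Two encoded sets are disjoint exactly when the edges share no vertex and have different colours. For $s\in S$ let $\mathcal H_s$ be the $(k+1)$-uniform hypergraph $\{\hat e: s\in e\}$. Any family $f(s,j)\in\mathcal H_s$ of pairwise disjoint sets, for $s\in S$ and $j\in[m]$, is a rainbow $m$-matching from $S$ to $W$. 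Indeed, each $s$ gets exactly $m$ edges, each vertex of $W$ is covered at most once, and all colours are distinct. So it suffices to verify the Aharoni--Haxell condition with uniformity $k+1$: for every $I\subseteq S$, the union $\bigcup_{s\in I}\mathcal H_s$ should contain a matching of size greater than $m(k+1)(|I|-1)$.

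We split the verification according to $|I|$. \emph{Small $I$.} Take $|I|\le n/(\log n)^{\beta}$ for a suitable $\beta$. Here we use hypothesis (4) together with a greedy argument. Each $s\in I$ has at least $\log n/(\log\log n)^k$ admissible edges. By Lemma \ref{Lemma 2.2}(1),(5), a single edge can block (share a vertex of $W$ with, or share a colour with) only $O(\log n)$ others. Hence we use local sparsity instead. By Lemma \ref{Lemma 2.2}(7), any set $X$ with $|X|$ at most roughly $n/(\log n)^{1/(k-1.5)}$ spans at most $6|X|$ edges. This shows that the edges at $I$ cannot concentrate on few $W$-vertices. Therefore $|N(I)\cap W|\ge \frac{\log n}{(\log\log n)^{k}}\cdot|I|/O(1)$, which is far more than $m(k+1)|I|$. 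Colour repetitions are handled by Lemma \ref{Lemma 2.2}(6): every colour occurs at most $10\log n$ times. A greedy selection that discards conflicting edges then yields a matching of size $\Omega(|I|\log n/(\log\log n)^{k}\cdot 1/\log n)$. We will tune this so that it exceeds $m(k+1)|I|$; it works because the degree lower bound and the conflict count differ by a polylog factor only in the colour direction. Colour conflicts will be controlled by choosing the edges one colour class at a time.

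\emph{Large $I$.} Take $n/(\log n)^{\beta}\le|I|\le|S|\le 2n/\log^{0.4}n$. Here we argue directly in the random hypergraph, with a union bound over all $W$ (at most $2^n$ choices), all $C_2$ (at most $2^{2n}$), and all $I$. We need a matching of size $(k+1)m|I|$ in the union, so it suffices to show the following. After deleting any set $Z\subseteq W$ with $|Z|\le 2(k+1)^2m|I|$ and any colour set $D$ with $|D|\le 2(k+1)m|I|$, some admissible edge still exists at $I$. This lets a greedy process run for $(k+1)m|I|$ steps. Since $|W\setminus Z|=(1-o(1))|W|$ and $|C_2\setminus D|=(1-o(1))\mu n$, the number of potential edges is about $|I|\binom{|W|}{k-1}$. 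Each is present with probability $p\mu(1-o(1))$, so the expected count is of order $|I|\log n/(\log\log n)^{k-1}$. The failure probability is then $\exp(-\Omega(|I|\log n/(\log\log n)^{k-1}))$. This beats the union bound over $I,Z,D$, which costs $\exp(O(|I|\log n))$ for $Z$ and $D$, only if $|Z|$ and $|D|$ are $O(|I|)$ with a small constant. To make this work we will instead count, for a fixed greedy maximal matching $M$, the event that no edge avoids $V(M)\cup C(M)$. Then $Z$ and $D$ are determined by $M$, and the union bound is over $M$, costing at most $(n^k)^{O(m|I|)}$. We will compensate by requiring $|I|$ large enough that the exponent $|I|\log n/(\log\log n)^{k-1}$ dominates $O(m|I|\log n)$. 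This fails by the polyloglog factor, so the real fix is to use the full edge count $|I|\cdot|W|^{k-1}p\mu$ together with $|W\setminus V(M)|$ against all $W$-sets simultaneously.

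The main obstacle is this large-$I$ regime: the union bound over the matching $M$ has to be balanced against the only polylogarithmic expected degree. If the direct count fails, the fallback is to apply Theorem \ref{Theorem 3.11.} to obtain an uncoloured $d$-matching with $d=(k+1)m\cdot 10$. Its Hall condition follows from the neighbourhood bounds above. We would then extract a rainbow $m$-submatching by the Lov\'asz Local Lemma (Lemma \ref{Lemma 3.6.}), using that each colour appears at most $10\log n$ times and that each $s$ has $d\gg m$ choices, in the same way as the selection argument in Theorem \ref{Theorem 3.9.}.
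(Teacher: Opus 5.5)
Your top-level plan (Aharoni--Haxell applied to colour-augmented edges, with a split by $|I|$) matches the paper's, but as written the argument has a broken encoding, a real gap for small $I$, and an unfinished large-$I$ case.

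\textbf{The encoding.} Your sets $\hat e=e\cup\{C(e)\}$ still contain $s$, so any two members of $\mathcal H_s$ meet in $s$. Hence no pairwise disjoint family $f(s,1),\dots,f(s,m)$ exists for $m\ge 2$. The condition you set out to verify is also false for every $|I|\ge 2$, because a matching in $\bigcup_{s\in I}\mathcal H_s$ has at most $|I|$ edges. You must delete $s$ and use $(e\setminus\{s\})\cup\{C(e)\}$, a $k$-uniform hypergraph on $W\cup C_2$, as the paper does; the target then becomes a matching of size greater than $mk(|I|-1)$.

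\textbf{Small $I$: colour diversity is never established.} Your greedy bound $\Omega(|I|/(\log\log n)^k)$ is smaller than $|I|$, and no tuning rescues it. The deterministic facts you invoke (colour classes of size at most $10\log n$, $d^c_H(v)\ge d_H(v)-2$) are consistent with all edges at $I$ carrying only $O(|I|/(\log\log n)^k)$ colours. A matching in the auxiliary hypergraph cannot exceed the number of colours present, so the argument fails. What is missing is a probabilistic statement about colours on small sets. The paper takes $|I|\le n/(\log n)^{0.8}$ and obtains, via Theorem~\ref{Theorem 3.11.} and Lemma~\ref{Lemma 2.2}(7),(8), a (not necessarily rainbow) $d$-matching from $I$ into $W$ with $d=(\log n)^{0.3}$; your expansion estimate is essentially this step. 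It then union-bounds over all collections of $|I|$ stars of size $d$ to show that w.h.p.\ none is coloured from only $km|I|$ colours. The factor $(km|I|/c)^{|I|d}$ wins precisely because $|I|/n\le(\log n)^{-0.8}$ and $d\gg km$. Taking one edge of each colour from this $d$-matching, whose $W$-parts are already disjoint, gives the required matching.

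\textbf{Large $I$: you had the right argument and dropped it over a miscount.} Sets $Z,D$ of size $O(m|I|)$ number $\exp(O(m|I|\log(n/|I|)))=\exp(O(m|I|\log\log n))$ when $|I|\ge n/(\log n)^{0.8}$, not $\exp(O(|I|\log n))$. Even the crude count $e^{O(n)}$ over arbitrary $W$, $W'\subseteq W$, $C_2$, $C_0\subseteq C_2$ and $I$ is beaten by the failure probability $\exp(-\Omega(|I|\log n/(\log\log n)^{k-1}))\le\exp(-\Omega(n(\log n)^{0.2}/(\log\log n)^{k-1}))$. This is exactly the paper's Case 2. If a maximum matching $M$ had at most $mk(|I|-1)$ edges, then $W\setminus V(M)$ and $C_2\setminus C(M)$ would still have sizes $(1-o(1))|W|$ and $(1-o(1))\mu n$. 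W.h.p.\ some edge from $I$ into $\binom{W\setminus V(M)}{k-1}$ with colour in $C_2\setminus C(M)$ then exists and extends $M$, a contradiction. Only union-bounding over $M$ itself is wasteful.

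So no fallback is needed, and yours would not work anyway. With constant $d=10(k+1)m$, each same-coloured pair is selected with constant probability, while nothing bounds how often a colour repeats inside the $d$-matching, so the local lemma condition cannot be verified.
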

	\begin{proof}
		For every $s\in S$, we construct a $k$-uniform bipartite hypergraph $\mathcal{H}_s$ with vertex set $C_2$ and $W$, edge set as follow
		$$E(\mathcal{H}_s):=\{e'=(e\cup \{C(e)\})\setminus \{s\}|  e\in E(H),\ \ s\in\ e\ and\ |e\cap M|=k-1,\ C(e)\in C_2\}.$$
		So there exists  a rainbow $m$-matching from $S$ to $W$, with all colors from $C_2$ in $H$, which means there exists a matching set  $\{M_s \}_{s\in S}$ in $\cup_{s\in S} E(\mathcal{H}_s)$, such that for every $s\in S$, $|M_s|=m$ and  each edge of $M_s$ is consist of one vertex of $C_2$ and $k-1$ vertices of $W$. That is, we need to find a function $f:S\times[m]\rightarrow \cup _{s\in S}E(\mathcal H_s)$ such that $f(i,j)\in E(\mathcal H_i)$ for every $(i,j)\in S\times[m]$, and $f(i,j)\cap f(i',j')=\phi$ for $(i,j)\neq(i',j')$.  Theorem 3.10 shows that its sufficient condition  is that for every $I\subseteq S$, the hypergraph $\cup_{i\in I}\mathcal H_i$ contains a matching of size greater than $mk(|I|-1)$.
		
		\noindent\textbf{Case 1:} If $|I|\leq{n\over(\log n)^{0.8}}$.
		
		\noindent \textbf{Claim 4}. There exists a $(\log n)^{0.3}$-matching from $I$ to $W$ in $H$.
		
		\begin{proof} Consider the $k$-uniform bipartite subhypergraph $B:=H[I,{W\choose k-1};C_2]$ of $H$. Following Theorem \ref{Theorem 3.11.}, we have  $B$ contains a $d$-matching from $I$ to $W$ if and only if for every $X\subseteq I$, there is  $$|N_B(X)|> d(2k-3)(|X|-1).$$
			
			Suppose that fails for $B$ with $d=(\log n)^{0.3}$, then there exists a subset $X\subseteq I$ with $|N_B(X)\cup X|\leq d(2k-3)|X|$. Let $Y=N_B(X)\cup X$, then $|Y|\leq d(2k-3)|X|\leq{(2k-3)n\over (\log n)^{0.5}}$, by (4) of Theorem \ref{Theorem 3.12.} we have  $$e_B(Y)\geq |X|{\log n\over (\log\log n)^k}\geq {|Y|\over d(2k-3)}{\log n\over (\log\log n)^k}\geq {|Y|(\log n)^{0.7}\over (2k-3)(\log\log n)^k}.$$
			It's contradiction to (7) and (8) of Lemma \ref{Lemma 2.2} since
			\begin{align*}
			e_B(Y)&\leq e_H(Y)\leq{|Y|^k\over k!}p\left( {n\over|Y|}\right) \leq{2\log n\over k}|Y|\left( {|Y|\over n}\right)^{k-2}\\
			&\leq{2\log n\over k}|Y|\left( {(2k-3)\over (\log n)^{0.5}}\right)^{k-2} \leq|Y|{2(2k-3)^{k-2}\over k}(\log n)^{0.5} .
			\end{align*}
		\end{proof}
		
		Next, we claim that for any $|I|\leq{n\over(\log n)^{0.8}}$, the number of distinct colors of $C_2$ for the above matching in Claim 1 is more than $mk|I|$( So  $k$-uniform hypergraph $\cup_{i\in I}\mathcal H_i$ contains a matching of size greater than $mk|I|$). Indeed,  if not,  then there is a collection of $|I|\leq{n\over (\log n)^{0.8}}$ stars, each of size $d=(\log n)^{0.3}$, and the number of  distinct colors of $C_2$ appearing  on its edges is at most $mk|I|$. However,
		\begin{align*}
		\Pr(exists\ a\ &collection\ of\ |I|\leq{n\over (\log n)^{0.8}}\ star\,\ each\ of\ size\ d,\ and\ the\ number\ of\ distinct\ colors\\ &\ of\ C_2\ appearing\ on\ its\ edges\ is\ at\ most\ km|I|)\\
		&\leq\sum_{|I|\leq{n\over (\log n)^{0.8}}}{n\choose |I|}{{n\choose k-1}\choose d}^{|I|}p^{|I|d}{|C_2|\choose km|I|}\left({km|I|\over |C_2|}\right)^{|I|d}\\
		&\leq\sum_{|I|\leq{n\over (\log n)^{0.8}}}\left({en\over |I|}\right)^{|I|}\left({en^{k-1}p\over d}\right)^{|I|d}\left({e\mu n\over km|I|}\right)^{km|I|}\left({km|I|\over \mu n}\right)^{|I|d}\\
		&=\sum_{|I|\leq{n\over (\log n)^{0.8}}}\left[\left({en\over |I|}\right)\left({e\mu n\over km|I|}\right)^{km}\left({en^{k-1}pkm|I|\over d\mu n}\right)^{d}\right]^{|I|}\\
		&\leq\sum_{|I|\leq{n\over (\log n)^{0.8}}}\left[\left({en\over |I|}\right)\left({e\mu n\over km|I|}\right)^{km}\left({2e k!m|I|\log n\over d \mu n}\right)^{d}\right]^{|I|}\\
		&=\sum_{|I|\leq{n\over (\log n)^{0.8}}}\left[O_{k,m,\mu}\left({|I|\over n}\right)^{d-1-km}\left({\log n\over d}\right)^{d}\right]^{|I|}\\
		&\leq\sum_{|I|\leq{n\over (\log n)^{0.8}}}\left[O_{k,m,\mu}\left({1\over (\log n)^{0.8}}\right)^{d-1-km}(\log n)^{0.7d}\right]^{|I|}
		=o(1),
		\end{align*}
		where the last inequality holds for  $d\gg km$.
		
		\noindent\textbf{Case 2}: If ${n\over(\log n)^{0.8}}\leq|I|\leq|S|$.
		
		Suppose for contradiction that  there exists a $I\subseteq S$ satisfies that the maximum matching $M$   in $\cup_{i\in I}\mathcal{H}_i$ is of size $|M|\leq mk(|I|-1)<mk|I|$.  We next to prove that w.h.p.  $M$ can be extended, which contradict the fact.
		
		Let $C_0=C_2\cap M$ and let $W'=W\cap V(M)$. We know that $C_2\setminus C_0\neq \phi,\ W\setminus W'\neq \phi$ since $|C_2|\gg|M|\gg|S|$. Let's consider the edges that incident with vertices of $W\setminus W'$. Since $M$ is the maximum matching,  for every $k$ element $e$ which consisted of a vertex $i\in I$  and $k-1$ vertices in $W\setminus W'$, there is
		\begin{align}
		\begin {cases}
		&e\notin E(H), or\\
		& e\in E(H)\ s.t.\begin{cases}
		&C(e)\in C_0, or\\
		&C(e)\notin C_2.\\
		\end{cases}\\
		\end{cases}
		\end{align}
		Otherwise, $\{(e\cup \{C(e)\})\setminus\{i\}\}\cup M$ will be a larger matching than  $M$. Let   $A$ denotes  event '(5)'. We have
		\begin{align*}
		\Pr[A]&\leq\left(1-p+p\left( {|C_0|+|C\backslash C_2|\over c}\right) \right)^{|I|{|W\setminus W'|\choose k-1}}=\left(1-p+p\left( {mk|I|\over(1+\varepsilon )n}+1-{\mu n\over (1+\varepsilon )n}\right) \right)^{|I|{|W\setminus W'|\choose k-1}}\\
		&=\left(1-p\left({\mu n-mk|I|\over(1+\varepsilon )n}\right) \right)^{|I|{|W\setminus W'|\choose k-1}}\leq\exp\left(-O_{k,m,\mu} p|I|{{n\over\log\log n}-m(k-1)j\choose k-1}\right)\\
		&\leq\exp\left(-O_{k,m,\mu}p|I|{{n\over\log\log n}\choose k-1}\right)\leq\exp\left(-O_{k,m,\mu}|I|{\log n\over(\log\log n)^{k-1}}\right).	
		\end{align*}
		Thus, taking the union we have
		\begin{align*}
		&{n\choose |W|}{|W|\choose |W'|}{c\choose|C_2|}{|C_2|\choose|C_0|}\sum_{|I|={n\over (\log n)^{0.8}}}^{{n\over (\log n)^{0.4}}}\exp\left(-O_{k,m,\mu}|I|{\log n\over(\log\log n)^{k-1}}\right)\\
		&\leq 16^n\sum_{|I|={n\over (\log n)^{0.8}}}^{{n\over (\log n)^{0.4}}}\exp\left(-O_{k,m,\mu}|I|{\log n\over(\log\log n)^{k-1}}\right)\\
		&\leq 16^n{n\over (\log n)^{0.4}}\exp\left(-O_{k,m,\mu} {n(\log n)^{0.2} \over(\log\log n)^{k-1}}\right)=o(1).	
		\end{align*}
	\end{proof}

	\section{Finding a colored rooted booster}\label{sec4}
	
	In the final step of the proof we shall need a Hamilton Berge path with two
	prescribed endpoints.  In the graph case this is usually achieved by joining
	a single auxiliary edge \(x'y'\), finding a Hamilton cycle through this edge,
	and then deleting it.  For Berge cycles in hypergraphs, we need extra require.  Indeed, if a \(k\)-edge \(e^*\) merely contains \(x'\)
	and \(y'\), a Berge cycle using \(e^*\) need not use \(x'\) and \(y'\) as the
	two core vertices corresponding to \(e^*\).  We therefore work throughout this
	section with a rooted virtual edge.
	
	\begin{definition}\label{def:rooted-virtual-edge}
		Let \(H_0\) be a \(k\)-uniform hypergraph on vertex set \(V\).  A rooted virtual
		edge is a triple
		\[\mathfrak e=(x,e^*,y),\]
		where \(x,y\in V\), \(x\ne y\), \(e^*\in\binom{V}{k}\setminus E(H_0)\), and
		\(\{x,y\}\subseteq e^*\).  The \(k\)-set \(e^*\) is not regarded as an edge of
		the original hypergraph and is assigned no colour.
	\end{definition}
	
	When we write \(H_0+\mathfrak e\), we mean the hypergraph obtained from \(H_0\)
	by joining the single edge \(e^*\), together with the additional
	requirement that \(e^*\), whenever it is used in a Berge path or cycle, is used
	with core pair \((x,y)\) or \((y,x)\).  Thus a Berge path 
	$v_0f_1v_1f_2\cdots f_\ell v_\ell$
	in \(H_0+\mathfrak e\) is said to contain \(\mathfrak e\) if \(f_i=e^*\) for
	some \(i\) and \(\{v_{i-1},v_i\}=\{x,y\}\).  A Berge cycle contains
	\(\mathfrak e\) with the analogous meaning.  We call such paths and cycles
	\(\mathfrak e\)-\emph{rooted}.
	
	\begin{definition}\label{def:rooted-booster}
		Let \(H_0\) be a \(k\)-uniform hypergraph on \(V\), and let
		\(\mathfrak e=(x,e^*,y)\) be a rooted virtual edge for \(H_0\).  A
		\(k\)-set
		\[
		f\in\binom{V}{k}\setminus (E(H_0)\cup\{e^*\})
		\]
		is called an \(\mathfrak e\)-booster for \(H_0\) if at least one of the
		following holds:
		{\rm (1)} \(H_0+\mathfrak e+f\) has fewer connected components than
		\(H_0+\mathfrak e\);
		{\rm (2)} \(H_0+\mathfrak e+f\) contains an \(\mathfrak e\)-rooted Berge path
		longer than every \(\mathfrak e\)-rooted Berge path in \(H_0+\mathfrak e\);
		{\rm (3)} \(H_0+\mathfrak e+f\) contains a Hamilton Berge cycle which contains
		\(\mathfrak e\).
	\end{definition}
	
	The rooting in Definition~\ref{def:rooted-booster} is essential.  Without it,
	the deletion of \(e^*\) from the final Hamilton Berge cycle would not
	necessarily leave a Berge path with endpoints \(x\) and \(y\).

	We use the standard P\'{o}sa ration-extension technique (see \cite{Posa1976}), but only rotations which preserve
	the rooted virtual edge are allowed.  Let
	\[
	P=v_0 f_1 v_1 f_2\cdots f_\ell v_\ell
	\]
	be a lengest \(\mathfrak e\)-rooted Berge path  in
	\(H_0+\mathfrak e\).  
	If there exists an edge  $e'\in E(H_0)\setminus E(P)$ satisfies $\{v_0, v_i\}\subset e'$  and $f_i\neq e$ for same $i\neq 0, l$, then  $P''=v_{i-1}e_{i-1}v_{i-2}\dots v_0e''v_i\dots e_lv_l$ be  another Berge path of length $l$ in $H_0+\mathfrak e$. We say that $P'$ is a \emph{admissible rotation} of  $P$ with \emph{ fixed endpoint $v_l$,  pivot $v_i$, new endpoint $v_{i-1}$} and \emph{broken edges $e'$}(see Figure \ref{Figure 1}).
	
	\begin{figure}[htp]
		\centering
		\begin{tikzpicture} [thick]%
		[spy using overlays={magnfication=5,size=1cm,connect}]
		\coordinate[label=-50:$v_0$] (v_0) at (0,0);
		\coordinate[label=-20:] (v_1) at (1,0);
		\coordinate[label=-20:] (v_2) at (2,0);
		\coordinate[label=-20:$v_{i-1}$] (v_{i-1}) at (4,0);
		\coordinate[label=-20:$e'$] (e') at (2,1);
		\coordinate[label=-20:$v_{i}$] (v_{i}) at (5,0);
		\coordinate[label=-20:] (v_{i+1}) at (6,0);
		\coordinate[label=-20:$v_l$] (v_{l}) at (8,0);
		\draw  (v_0)--(v_{i-1})   (v_{i})--(v_{l})  ;
		\draw [dashed] (v_{i-1})--(v_{i});
		\draw (v_0) arc (135:45:3.53);
		\foreach \p in {v_0,v_1,v_2,v_{i-1},v_{i},v_{i+1},v_{l}} \fill %
		[opacity=0.75] (\p)  circle   (2pt);
		\end{tikzpicture}
		\caption{$P'$ as a rotation of  $P$}\label{Figure 1}
	\end{figure}
	Let
	$
	\operatorname{End}(P,v_\ell)
	$
	be the set of all endpoints obtainable from \(v_0\) by a sequence of admissible
	rotations while keeping \(v_\ell\) fixed.  The set
	\(\operatorname{End}(P,v_0)\) is defined symmetrically.
	For a set \(X\subseteq V(P)\), write $$N_P(X):=\bigcup_{u\in X}(\{v\in f_{i}\mid u=v_i\}\cup\{v\in f_{i+1}\mid u=v_i\})$$ for the set of vertices of
	\(P\) which lie together with a vertex of \(X\) in one of the two path-edges
	incident with that vertex along \(P\). 	Since each vertex of \(X\) has at most two incident path-edges and each such
	edge contributes at most \(k-1\) further vertices, we have
	\[
	|N_P(X)|\le 2(k-1)|X|.
	\] 
	
	\begin{lemma}\label{lem:rooted-rotation-endpoints}
		Let \(H_0\) be a \(k\)-uniform hypergraph, let
		\(\mathfrak e=(x,e^*,y)\) be a rooted virtual dge for \(H_0\), and let
		$
		P=v_0 f_1 v_1\cdots f_\ell v_\ell
		$
		be a longest \(\mathfrak e\)-rooted Berge path in \(H_0+\mathfrak e\).  Put
		$
		U:=\operatorname{End}(P,v_\ell).
		$
		Then
		\[N_{H_0}(U)\setminus V(P) = \emptyset,\quad
		N_{H_0}(U)\setminus U
		\subseteq N_P(U)\cup e^* .
		\]
		Consequently,
		\[
		|N_{H_0}(U)\setminus U|
		\le 2(k-1)|U|+k .
		\]
	\end{lemma}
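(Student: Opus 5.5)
We follow the classical Pósa argument, adapted to Berge paths and to the rooting constraint. Fix $u\in U$. By definition there is a longest $\mathfrak e$-rooted Berge path $P_u$, obtained from $P$ by admissible rotations with $v_\ell$ fixed, which has endpoints $u$ and $v_\ell$ and the same vertex set $V(P)$. Each admissible rotation keeps $e^*$ with core pair $\{x,y\}$, so $P_u$ is still $\mathfrak e$-rooted. Let $w\in N_{H_0}(u)$, witnessed by an edge $g\in E(H_0)$ with $\{u,w\}\subseteq g$.

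\emph{First inclusion.} Suppose $w\notin V(P)=V(P_u)$. We distinguish whether $g$ lies on $P_u$.
\begin{itemize}
\item If $g\notin E(P_u)$, then $w\,g\,u\,P_u$ is an $\mathfrak e$-rooted Berge path of length $\ell+1$, contradicting maximality.
\item If $g\in E(P_u)$, say $g=f_j'$ joining consecutive path vertices $v_{j-1}',v_j'$, then $g$ is not the edge at the endpoint $u$ unless $j=1$. Here the hypergraph subtlety appears: an edge already used on the path cannot simply be reused.
\end{itemize}
In the second case we argue through the rotations instead of through $P_u$ alone. The edge of $P_u$ at $u$ is either the broken edge $e'$ of the last rotation or an original path edge. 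If $g$ is the path edge incident to $u$, then $w\in N_P(U)$, which is exactly why $N_P(U)$ appears in the statement. For the first inclusion in this case I would try to show that $w\notin V(P)$ cannot occur at all: replace the endpoint $u$ by $w$, using $g$ for the pair $(u',w)$, where $u'$ is the neighbour of $u$ on $P_u$. This needs $\{u',w\}\subseteq g$, which holds only when $g$ is the edge preceding $u$. So one instead extends $P_u$ to a path one longer, if necessary by re-routing. I expect this case, when $g$ is a path edge not incident to the endpoint, to be the main obstacle. The paper's notion of $N_P$ is designed to absorb it.

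\emph{Second inclusion.} Now let $w\in V(P)\setminus U$, and write $w=v_i'$ on $P_u$. Suppose $g\notin E(P_u)$, $g\ne e^*$ (automatic, since $g\in E(H_0)$), and the edge $f_{i}'$ preceding $w$ on $P_u$ (the edge before $w$ counted from $u$) is not $e^*$. Then the rotation with pivot $w$ and broken edge $g$ is admissible and yields the new endpoint $v_{i-1}'\in U$. This alone does not put $w$ in $U$, so to conclude we must use the neighbours of $w$ along the path. Since $w$ and $v_{i-1}'$ lie in the common path edge $f_i'$, we get $w\in N_{P}(U)$. Here one must check that $N_P$ computed along $P$ agrees with the edges of $P_u$; this holds because rotations only create new edges incident to endpoints in $U$, and those are contained in the neighbourhoods counted.

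\emph{Exceptional cases.} If $g\in E(P_u)$, then $w$ lies in a path edge containing $u$. The situation where the edge at $u$ is a rotation edge is again handled by the history of rotations, inducting on the number of rotations. If $f_i'=e^*$, then $w\in\{x,y\}\subseteq e^*$. Together these give $N_{H_0}(U)\setminus U\subseteq N_P(U)\cup e^*$.

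\emph{Size bound.} The final bound follows immediately from $|N_P(U)|\le 2(k-1)|U|$ and $|e^*|=k$.
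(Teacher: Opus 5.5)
There is a genuine gap, and it is exactly the case you flag as ``the main obstacle'' and then leave open: $g\in E(P_u)$ with $u$ \emph{not} a core vertex of $g$. In a Berge path the endpoint $u$ may lie, as a non-core vertex, in many internal path edges. Every vertex of such an edge is an $H_0$-neighbour of $u$. Yet the edge cannot serve as $e'$ in an admissible rotation, since the rule requires $e'\in E(H_0)\setminus E(P)$. And $N_P(U)$ only collects vertices of path edges of $P$ in which a vertex of $U$ is a \emph{core} vertex. So $N_P(U)$ does not absorb this case, and your ``Exceptional cases'' paragraph tacitly replaces ``a path edge containing $u$'' by ``the path edge at $u$''. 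The first inclusion has the same problem: for a non-core vertex $w$ of the edge at $u$ you only get $w\in N_P(U)$. That does not put $w$ in $V(P)$, and nothing contradicts maximality.

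These cases cannot be closed, because under the paper's definitions the statement is false. Take $k=3$ and $\ell\ge 8$. Let $H_0$ consist of $f_1=\{v_0,v_1,z\}$ and $f_j=\{v_0,v_{j-1},v_j\}$ for $2\le j\le \ell-1$, and let $\mathfrak e=(v_{\ell-1},e^*,v_\ell)$ with $e^*=\{v_{\ell-1},v_\ell,z\}$. The path $P=v_0f_1v_1f_2\cdots f_{\ell-1}v_{\ell-1}e^*v_\ell$ uses every edge of $H_0+\mathfrak e$, so it is a longest $\mathfrak e$-rooted Berge path. Since $E(H_0)\setminus E(P)=\varnothing$, no rotation is possible and $U=\{v_0\}$. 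But $N_{H_0}(v_0)=\{z,v_1,\dots,v_{\ell-1}\}$. Here $z$ is not a core vertex of $P$, $v_2\notin N_P(U)\cup e^*=\{v_0,v_1,z,v_{\ell-1},v_\ell\}$, and $|N_{H_0}(U)\setminus U|=\ell>7=2(k-1)|U|+k$.

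The paper's own proof has the same hole. It simply asserts that if $g$ was an edge of the original path then every vertex of $g$ lies in $N_P(U)\cup U$, which fails exactly when $u$ is a non-core vertex of $g$. The missing idea is a Berge-specific rotation \emph{through} a path edge. Suppose the current endpoint $u$ of $P_u=uf'_1v'_1\cdots f'_\ell v_\ell$ lies in some $f'_j\neq e^*$ with $j\ge 2$. Then $v'_{j-1}f'_{j-1}\cdots v'_1f'_1\,u\,f'_j\,v'_j\cdots f'_\ell v_\ell$ is again an $\mathfrak e$-rooted Berge path of length $\ell$, with new endpoint $v'_{j-1}$.

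If $\operatorname{End}(P,v_\ell)$ is defined with this move allowed, the global P\'osa argument goes through. Every move breaks one consecutive pair and turns one of its two vertices into an endpoint. Hence an original pair $(v_{i-1},v_i)$ with $v_{i-1},v_i\notin U$ survives, with its edge $f_i$, in every $P_u$. This is also what your local claim that ``$N_P$ computed along $P$ agrees with the edges of $P_u$'' actually needs. One then gets $N_{H_0}(U)\setminus U\subseteq N_P(U)\cup e^*\cup\{v_\ell\}$; the extra vertex $v_\ell$ appears only because the paper forbids $v_\ell$ as a pivot. Consequently $|N_{H_0}(U)\setminus U|\le 2(k-1)|U|+k+1$. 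The first inclusion, with $V(P)$ the set of core vertices as you use it, has to be dropped in any case.
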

	
	\begin{proof}
		Fix \(u\in U\), and let \(P_u\) be an \(\mathfrak e\)-rooted Berge path with
		endpoints \(u\) and \(v_\ell\), obtained from \(P\) by admissible rotations.
		Consider an edge \(g\in E(H_0)\) containing \(u\).
		
		If \(g\notin E(P_u)\) and \(g\) contains a vertex outside \(V(P)\), then
		adjoining \(g\) at the endpoint \(u\) gives an \(\mathfrak e\)-rooted Berge
		path longer than \(P\), contradicting the maximality of \(P\).  Hence every
		vertex of \(g\) lies in \(V(P)\).
		
		If \(g\notin E(P_u)\) and \(g\) contains an internal vertex \(z\in V(P)\), then
		rotating \(P_u\) with pivot \(z\) and new edge \(g\) is admissible unless this
		would destroy the occurrence of the rooted edge \(\mathfrak e\) (see Figure \ref{Figure 2}).  In the
		exceptional case, the pivot lies in \(e^*\).  Thus \(z\in U\cup N_P(U)\cup e^*\).
		
		\begin{figure}[htp]
			\centering
			\begin{tikzpicture} [thick]%
			[spy using overlays={magnfication=5,size=1cm,connect}]
			\coordinate[label=-50:$v_i$] (v_i) at (0,0);
			\coordinate[label=-20:] (v_1) at (1,0);
			\coordinate[label=-20:] (v_2) at (2,0);
			\coordinate[label=-20:$z$] (z) at (4,0);
			\coordinate[label=-20:$e^*$] (e') at (4.3,0);
			\coordinate[label=-20:$g$] (e) at (2,1);
			\coordinate[label=-20:$v$] (v) at (5,0);
			\coordinate[label=-20:] (v_{i+1}) at (6,0);
			\coordinate[label=-20:] (v_{l-1}) at (8,0);
			\coordinate[label=-20:$v_l$] (v_l) at (9,0);
			\draw  (v_i)--(v_1)  (v_1)--(v_2)   (v_{i+1})--(v_{l-1}) (v_2)--(z)   (v)--(v_{i+1})   (v_{l-1})--(v_l)  ;
			\draw [dashed] (z)--(v);
			\draw (v_i) arc (135:45:3.53);
			\foreach \p in {v_i,v_1,v_2,z,v,v_{i+1},v_{l-1},v_l} \fill %
			[opacity=0.75] (\p)  circle   (2pt);
			\end{tikzpicture}
			\caption{$P_{v_i}$}\label{Figure 2}
		\end{figure}
		
		Finally suppose that \(g\in E(P_u)\).  If \(g\) was already an edge of the
		original path \(P\), then every vertex of \(g\) belongs to \(N_P(U)\cup U\).
		If \(g\) was introduced during an earlier rotation, let \(z\) be the pivot at
		the first step at which \(g\) entered the path.  The same argument applied at
		that first step shows that the relevant vertices of \(g\) lie in
		\(U\cup N_P(U)\cup e^*\).  Therefore
		\[
		N_{H_0}(U)\setminus U\subseteq N_P(U)\cup e^* .
		\]
		This proves the claimed bound.
	\end{proof}
	
	The preceding lemma motivates the following compact condition, which is the
	exact property needed in the booster argument.
	
	\begin{definition}\label{def:root-compatible}
		Let \(r\ge 1\).  A pair \((H_0,\mathfrak e)\), where
		\(\mathfrak e=(x,e^*,y)\) is a rooted virtual edge for \(H_0\), is called
		root-compatible at \(r\) if, for every longest \(\mathfrak e\)-rooted
		Berge path \(P\) in \(H_0+\mathfrak e\), both endpoint sets
		$
		\operatorname{End}(P,v_\ell)
		$ and $
		\operatorname{End}(P,v_0)
		$
		have size greater than \(r\).
	\end{definition}
	
	\begin{lemma}\label{lem-root-compatible}
		Let \(H_0\) be an \((r,3)\)-expander. For sufficiently large \(r\), the following holds: for every rooted
		virtual edge \(\mathfrak e\) for \(H_0\), the pair \((H_0,\mathfrak e)\)
		is root-compatible at  \(r\).
	\end{lemma}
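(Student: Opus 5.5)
The plan is a proof by contradiction that combines Lemma~\ref{lem:rooted-rotation-endpoints} with the expansion property of \(H_0\). Fix a rooted virtual edge \(\mathfrak e=(x,e^*,y)\) and a longest \(\mathfrak e\)-rooted Berge path \(P=v_0f_1v_1\cdots f_\ell v_\ell\) in \(H_0+\mathfrak e\). Put \(U:=\operatorname{End}(P,v_\ell)\). By symmetry (reverse \(P\)) it suffices to show \(|U|>r\). Suppose instead that \(|U|\le r\). Note that \(U\neq\emptyset\), since \(v_0\in U\).

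The first step applies the expander property of Definition~\ref{Definition 3.1}. Since \(H_0\) is an \((r,3)\)-expander and \(0<|U|\le r\), we get
\[
|N_{H_0}(U)\setminus U|\ge 3(k-1)|U| .
\]

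The second step uses Lemma~\ref{lem:rooted-rotation-endpoints}, which gives the upper bound \(|N_{H_0}(U)\setminus U|\le 2(k-1)|U|+k\). Comparing the two bounds yields \((k-1)|U|\le k\). Since \(k\ge 3\), this forces \(|U|\le 1\), so \(U=\{v_0\}\).

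The third step rules out this degenerate case, and it is where I expect the main obstacle. If \(U=\{v_0\}\), then by Lemma~\ref{lem:rooted-rotation-endpoints} every \(H_0\)-neighbour of \(v_0\) lies in \(N_P(\{v_0\})\cup e^*\). That set lies inside \(f_1\cup e^*\), so it has at most \(2k\) vertices. Expansion with \(Y=\{v_0\}\) requires \(|N_{H_0}(v_0)\setminus\{v_0\}|\ge 3(k-1)\), and for \(k\ge 3\) this exceeds \(2k-1\) whenever \(3(k-1)>2k-1\), i.e.\ \(k>2\). This gives the contradiction.

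The main thing to get right is the precise accounting in Lemma~\ref{lem:rooted-rotation-endpoints}. The \(+k\) term coming from \(e^*\) must not overlap badly with \(N_P(U)\). Also, a single vertex of \(U\) has only one path-edge incident to it when it is an endpoint, so I would sharpen \(|N_P(U)|\) to roughly \(2(k-1)|U|-(k-1)\) for endpoints. If the crude inequality \((k-1)|U|\le k\) still leaves small cases open, I would strengthen the bound using the fact that \(e^*\) meets \(U\cup N_P(U)\) in at most \(k\) vertices, including those already counted. That gives \(|N_{H_0}(U)\setminus U|\le 2(k-1)|U|+k-1\), which is strictly below \(3(k-1)|U|\) for every \(|U|\ge1\) once \(k\ge3\).

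The hypothesis ``\(r\) sufficiently large'' is only needed to make the statement non-vacuous, i.e.\ so that the endpoint sets are genuinely large for the booster counting later. The argument itself works for any \(r\ge1\). Finally, applying the same reasoning to the reversed path gives \(|\operatorname{End}(P,v_0)|>r\), so \((H_0,\mathfrak e)\) is root-compatible at \(r\).
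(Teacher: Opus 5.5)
Your proposal is correct and follows the paper's argument: the expansion lower bound \(3(k-1)|U|\) is played against the upper bound \(2(k-1)|U|+k\) from Lemma~\ref{lem:rooted-rotation-endpoints}. It is in fact more careful than the paper's proof.

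\textbf{The gap you close.} The paper finishes by asserting \(3(k-1)|U|>2(k-1)|U|+k\) for every nonempty \(U\). That inequality is equivalent to \((k-1)|U|>k\), which fails when \(|U|=1\), and the paper never treats that case. The paper also invokes an ``\((r,4)\)-expander'' and ``\(n\) sufficiently large'', neither of which is relevant. Your third step closes exactly this hole. Since \(v_0\) is an endpoint of \(P\), \(N_P(\{v_0\})\subseteq f_1\), so \(|N_{H_0}(v_0)\setminus\{v_0\}|\le 2k-1<3(k-1)\) for \(k\ge 3\).

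\textbf{Your fallbacks.} Your first suggested sharpening gives the same fix uniformly. The vertex \(v_0\in U\) has only one path-edge, so \(|N_{H_0}(U)\setminus U|\le (2|U|-1)(k-1)+k\). This is strictly below \(3(k-1)|U|\) for all \(|U|\ge 1\) when \(k\ge 3\), because the difference is \((k-1)(|U|+1)-k\ge k-2>0\). Your second fallback is wrong as stated and should be dropped. The bound \(2(k-1)|U|+k-1\) equals \(3(k-1)|U|\) when \(|U|=1\), so it is not strictly smaller, and you give no justification for replacing \(k\) by \(k-1\). Since your explicit treatment of \(U=\{v_0\}\) already settles that case, the proof does not need it. Your remark that the size of \(r\) plays no role in the argument is also correct.
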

	
	\begin{proof}
		Let \(P\) be a longest \(\mathfrak e\)-rooted Berge path, and put
		\(U:=\operatorname{End}(P,v_\ell)\).  Suppose for a contradiction that
		\(|U|\le r\).  By Lemma~\ref{lem:rooted-rotation-endpoints},
		\[
		|N_{H_0}(U)\setminus U|\le 2(k-1)|U|+k .
		\]
		On the other hand, since \(H_0\) is an \((r,4)\)-expander,
		\[
		|N_{H_0}(U)\setminus U|\ge 3(k-1)|U|.
		\]
		For \(n\) sufficiently large this is impossible, because
		$
		3(k-1)|U|>2(k-1)|U|+k
		$
		for every nonempty \(U\) and every \(k\ge 3\).  Thus \(|U|>r\).  The proof for
		\(\operatorname{End}(P,v_0)\) is identical.
	\end{proof}

	In the final construction the auxiliary hypergraph may contain a sparse set of	vertices of degree two, coming from the special small vertices.  The same	conclusion as Lemma~\ref{lem-root-compatible} remains valid
	if these degree-two vertices  pairwise separated by Berge distance at least five, and if the rooted
	virtual edge is not incident with any of them. 
	\begin{corollary}\label{corollary}
		Let \(H_0\) be an \((r,2)\)-expander. For sufficiently large \(r\), the following holds: for every rooted
		virtual edge \(\mathfrak e=(x,e^*,y)\) for \(H_0\), if neither $x$ nor $y$ is the vertex of degree two, then  the pair \((H_0,\mathfrak e)\)
		is root-compatible at  \(r\).	
	\end{corollary}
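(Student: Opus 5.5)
We follow the proof of Lemma~\ref{lem-root-compatible} verbatim, and insert a separate accounting for the degree-two vertices. Let $P=v_0f_1v_1\cdots f_\ell v_\ell$ be a longest $\mathfrak e$-rooted Berge path in $H_0+\mathfrak e$, and put $U:=\operatorname{End}(P,v_\ell)$. Suppose for a contradiction that $|U|\le r$. By Lemma~\ref{lem:rooted-rotation-endpoints}, whose proof uses only the maximality of $P$ and not any expansion, we have
\[
N_{H_0}(U)\setminus U\subseteq N_P(U)\cup e^* .
\]
The defect is that an $(r,2)$-expander only guarantees $|N_{H_0}(U)\setminus U|\ge 2(k-1)|U|$. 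Comparing this with the upper bound $2(k-1)|U|+k$ gives no contradiction, so the crude estimate $|N_P(u)|\le 2(k-1)$ must be sharpened.

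The sharpening proceeds vertex by vertex. Write $U=U_2\cup U'$, where $U_2$ consists of the degree-two vertices in $U$. First we show $v_\ell$-fixed rotations never make an endpoint of a degree-two vertex except possibly $v_0$ itself. Since $x,y$ are not of degree two, the root constraint does not interact with them. For $u\in U_2$, both edges at $u$ may lie on $P$, so $u$ contributes up to $2(k-1)$ to $N_P(U)$; its genuine neighbourhood, however, is also at most $2(k-1)$, so these vertices are balanced.

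For $u\in U'$, we use that $u$ is an endpoint of some rotated path $P_u$. Then exactly one $P_u$-edge is incident to $u$, and the vertices of $N_P(u)$ coming from the second original path edge at $u$ lie, after rotation, consecutively next to a new endpoint or pivot. The aim is to show that, on average, each $u\in U'$ contributes at most $2(k-1)-1$ new vertices to $N_P(U)\setminus U$, because its predecessor $v_{i-1}$ (the new endpoint created from pivot $v_i$) lies in $U$ and is therefore not counted.

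This is the standard Pósa bookkeeping: the points of $N_P(U)\setminus U$ are pivot-neighbours, and each pivot yields at most one new endpoint in $U$. One should obtain a bound of the form $|N_{H_0}(U)\setminus U|\le (2k-3)|U|+k$.

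Combining the two bounds, the expansion gives $2(k-1)|U|\le(2k-3)|U|+k$, hence $|U|\le k$. Small $U$ must be excluded by a direct argument. If $|U|\le k$ and $U$ contains no degree-two vertex, then the endpoint $v_0$ has degree at least $2(k-1)|\{v_0\}|/(k-1)\ge 2$ neighbours outside $U$, and the rotations produce more than $k$ endpoints. Degree-two vertices are separated by Berge distance at least five, so at most one of them lies in the short rotation window near $v_0$, and each one adds at most one endpoint. This yields $|U|>k$, a contradiction once $r$ is large. The symmetric argument handles $\operatorname{End}(P,v_0)$.

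The main obstacle is the refined counting of $N_P(U)\setminus U$, that is, extracting one saved vertex per endpoint; Lemma~\ref{lem:rooted-rotation-endpoints} alone is not strong enough with expansion factor $2$. If this saving fails, the fallback is to use Lemma~\ref{Lemma 3.2} with $U$ equal to the set of degree-two vertices: the hypothesis would then amount to the rest of $H_0$ being an $(r,3)$-expander, and Lemma~\ref{lem-root-compatible} could be applied directly on the complement, treating degree-two endpoints separately via the distance-five separation.
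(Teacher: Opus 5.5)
\textbf{There is a genuine gap: the refined bound that carries your argument is not proved, and the mechanism you offer for it is wrong.}

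Your diagnosis is right. With only $(r,2)$-expansion, the lower bound $2(k-1)|U|$ does not contradict the bound $2(k-1)|U|+k$ from Lemma~\ref{lem:rooted-rotation-endpoints}. The paper gives no proof of this corollary beyond the remark that Lemma~\ref{lem-root-compatible} ``remains valid''.

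The step meant to repair this, $|N_{H_0}(U)\setminus U|\le (2k-3)|U|+k$, is only announced. In a P\'osa rotation the new endpoint is a path-neighbour of the \emph{pivot}, and pivots are elements of $N(U)$. Maximality therefore gives that vertices of $N(U)$ have a path-neighbour in $U$ (for graphs, $N(U)\subseteq U^-\cup U^+$). It does not give that every endpoint has a path-neighbour in $U$. The resulting saving is bounded: it comes from $v_0$ having a single path edge, as in the classical bound $2|U|-1$. That saving is swallowed by the additive $k$ from $e^*$.

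Nothing in your mechanism uses $k\ge 3$. For $k=2$ it would assert $|N(U)\setminus U|\le |U|+2$, which is false. Let $H_0$ be the path $v_0v_1\cdots v_{3t-1}$ together with the chords $v_0v_{3j+1}$ ($1\le j\le t-1$) and an extra vertex $v_{3t}$. Take $\mathfrak e=(v_{3t-1},\{v_{3t-1},v_{3t}\},v_{3t})$. Then:
\begin{itemize}
\item $v_0\cdots v_{3t-1}e^*v_{3t}$ is a Hamilton, hence longest, rooted path;
\item $U=\{v_0,v_3,\dots,v_{3t-3}\}$;
\item $N_{H_0}(U)\setminus U=\{v_1\}\cup\{v_{3j-1},v_{3j+1}:1\le j\le t-1\}$ has $2|U|-1$ elements, which exceeds $|U|+2$ once $t\ge 4$.
\end{itemize}
Separately, in a Berge path $u$ can lie in many edges of $P_u$ as a non-core vertex. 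So ``exactly one $P_u$-edge is incident to $u$'' is also false.

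The remaining steps do not close the hole.
\begin{itemize}
\item \emph{Small $U$.} Expansion of $\{v_0\}$ only says that $v_0$ has at least $2(k-1)$ neighbours. They may all lie in path edges through $v_0$, and then no rotation is possible at all. That is exactly the configuration the argument must rule out, so ``the rotations produce more than $k$ endpoints'' is assumed rather than shown.
\item \emph{Degree-two endpoints.} Your claim that rotations never create a degree-two endpoint other than $v_0$ is asserted without proof. Nothing in the hypotheses prevents a rotation with pivot $v_i$ from making a degree-two vertex $v_{i-1}$ an endpoint.
\item \emph{The fallback proves a different statement.} Lemma~\ref{Lemma 3.2} passes from $(r,3)$-expansion of the complement of the degree-two set (plus the distance condition) to $(r,2)$-expansion of $H_0$, not conversely. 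So the corollary's hypothesis does not supply what Lemma~\ref{lem-root-compatible} needs.
\item \emph{Even the stronger hypothesis is not enough as used.} Lemma~\ref{lem-root-compatible} cannot simply be applied ``on the complement'', because rotations take place in $H_0$ and $U$ may contain degree-two vertices. You would need a Lemma~\ref{Lemma 3.2}-type lower bound $|N_{H_0}(U)\setminus U|\ge 2(k-1)|U_2|+(3k-4)|U'|$. You would also need a proof that $U'=U\setminus U_2$ is large enough to beat the additive $k$; for $k=3$ this requires $|U'|\ge 4$. That is where the Berge-distance-five separation and the condition on $x,y$ would have to enter, and the proposal does not carry this out.
\end{itemize}
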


	\begin{lemma}\label{lem:many-rooted-boosters}
		Let \(k\ge 3\) be fixed.  Let \(H_0\) be a \(k\)-uniform hypergraph on  vertex
		set \(V\) of size \(m\), and let \(\mathfrak e=(x,e^*,y)\) be a rooted virtual
		edge for \(H_0\).  Assume that \((H_0,\mathfrak e)\) is root-compatible at	 \(r\), where \(r\) is large enough, that every connected component of
		\(H_0+\mathfrak e\) has more than \(r\) vertices, and that
		$|E(H_0)|=O(m).$
		Then either \(H_0+\mathfrak e\) contains a Hamilton Berge cycle containing
		\(\mathfrak e\), or the number of \(\mathfrak e\)-boosters for \(H_0\) is at least	$\gamma_k r^2 m^{k-2},$	where \(\gamma_k>0\) depends only on \(k\).
	\end{lemma}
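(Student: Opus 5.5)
We reduce the statement to the classical P\'osa booster count, adapted to $k$-sets. Suppose $H_0+\mathfrak e$ has no Hamilton Berge cycle containing $\mathfrak e$. We split into two cases according to whether $H_0+\mathfrak e$ is connected.

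\textbf{Disconnected case.} Every component of $H_0+\mathfrak e$ has more than $r$ vertices. Choose two distinct components $A$ and $B$, and consider any $k$-set $f$ that meets both $A$ and $B$. Adding such an $f$ reduces the number of components, so $f$ is a booster by Definition~\ref{def:rooted-booster}(1). We count these sets by choosing $a\in A$, $b\in B$ and $k-2$ further vertices. Since each $k$-set is counted at most $\binom{k}{2}$ times, this gives at least $r^2\binom{m-2}{k-2}/\binom k2$ sets. Excluding the $O(m)$ edges of $H_0$ and $e^*$ loses only $O(m)$, which is negligible because $k\ge3$ and $r$ is large. So we obtain $\gamma_k r^2m^{k-2}$ boosters.

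\textbf{Connected case.} Fix a longest $\mathfrak e$-rooted Berge path $P=v_0f_1\cdots f_\ell v_\ell$. By root-compatibility, $U:=\operatorname{End}(P,v_\ell)$ has $|U|>r$. For each $u\in U$, fix a rotated path $P_u$ from $u$ to $v_\ell$. Again by root-compatibility, $\operatorname{End}(P_u,u)$ has size greater than $r$; note that $P_u$ is also a longest rooted path, so Definition~\ref{def:root-compatible} applies to it. This produces at least $r^2/2$ unordered pairs $\{u,w\}$ such that some longest $\mathfrak e$-rooted path $Q$ has endpoints $u$ and $w$. Take such a $Q$ and any $k$-set $f\supseteq\{u,w\}$ with $f\notin E(H_0)\cup\{e^*\}$ and $f\notin E(Q)$. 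Then $Q+f$ closes into an $\mathfrak e$-rooted Berge cycle $C$ on $V(Q)$, with $f$ used on the core pair $(w,u)$; the root is untouched, since $e^*$ stays in $Q$ with core pair $(x,y)$.

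\emph{Subcase $V(Q)=V$.} Then $C$ is Hamilton and contains $\mathfrak e$, so $f$ satisfies (3).

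\emph{Subcase $V(Q)\ne V$.} By connectivity of $H_0+\mathfrak e$, some edge $g$ joins $V(Q)$ to a vertex outside it. If $g\ne e^*$, we open $C$ at a cycle-edge adjacent to a vertex of $g\cap V(Q)$, avoiding the removal of $e^*$, and append $g$. This gives a longer rooted path, so $f$ satisfies (2). If $g=e^*$ is the only connecting edge, then $e^*$ already carries its core pair $x,y\in V(Q)$, and its extra vertices outside $V(Q)$ cannot be reached rootedly. This is the delicate point. I would handle it by choosing $f$ to also contain a vertex outside $V(Q)$ (there is freedom in $k-2\ge1$ slots), and then building the extension through $f$ itself. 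In that variant the booster is witnessed by a rooted path that ends with $f$ entering the new vertex, so the count is unaffected.

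\textbf{Counting.} Each unordered pair $\{u,w\}$ contributes at least $\binom{m-2}{k-2}-O(m)-O(1)$ valid $f$. A given $k$-set contains at most $\binom k2$ pairs, so we get at least $\frac{r^2}{2\binom k2}\bigl(\binom{m-2}{k-2}-O(m)\bigr)\ge\gamma_k r^2 m^{k-2}$ boosters for large $r$. (When $k=3$, the $O(m)$ correction is comparable to $m^{k-2}$, and we need $|E(H_0)|$ to be at most a small constant times $r^2 m$; since $r$ is large this holds, or one restricts to $f$ avoiding the bounded-degree structure.)

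\textbf{Main obstacle.} The hardest step is verifying that closing $Q$ by $f$ and re-opening preserves the rooted condition. The $k$-set $f$ may contain $x$ or $y$, or vertices of $e^*$, creating ambiguity about which core pairs are used. I would first try restricting $f$ to avoid $e^*\setminus\{u,w\}$; this costs only $O(m^{k-3})$ sets, so the count above is unchanged.
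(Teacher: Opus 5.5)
\textbf{Verdict: genuine gap.} Your outline is exactly the paper's argument: large components in the disconnected case, a double rotation via root-compatibility, closing a longest rooted path $Q$ with ends $u,w$ by an arbitrary new $k$-set $f\supseteq\{u,w\}$, then counting. You have also found the step the paper passes over. Its phrase ``extending the cycle along a vertex not on $P_{a,b}$'' tacitly assumes that some genuine edge of $H_0$ leaves $V(P_{a,b})$. When such an edge $g\in E(H_0)$ exists, your argument is fine: if $g$ is itself a cycle edge, delete it and re-attach it with a new core pair, and $g\neq e^*$ because $e^*\notin E(H_0)$.

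The gap is the remaining case, where $e^*$ is the only edge of $H_0+\mathfrak e$ leaving $V(Q)$; note that $V(Q)=V(P)$ for all rotated paths. Your patch forces $f$ to contain a vertex of $V\setminus V(P)$. That leaves at most $|V\setminus V(P)|\binom{m}{k-3}$ admissible $f$ per pair $\{u,w\}$, which is $o(m^{k-2})$ whenever $|V\setminus V(P)|=o(m)$. So the claim that ``the count is unaffected'' is false.

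No cleverer choice of $f$ can rescue this case, because the hypotheses allow $V\setminus V(P)$ to be tiny. Here $V\setminus V(P)$ is a union of components of $H_0$, each meeting $e^*$, and the assumption on components of $H_0+\mathfrak e$ says nothing about their size. For example, suppose $V\setminus V(P)=\{z\}$, where $z\in e^*\setminus\{x,y\}$ lies in no edge of $H_0$, and $P$ spans $V\setminus\{z\}$. Rotations use only edges of $H_0$, and $z$ is never a core vertex of a rooted path, so the lemma's hypotheses are insensitive to $z$ and do not exclude this. In this situation $H_0+\mathfrak e$ is connected and has no Hamilton Berge cycle through $\mathfrak e$. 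Every booster must contain $z$: a longer rooted path, or a Hamilton cycle, needs $z$ as a core vertex, and $e^*$ is reserved for the pair $x,y$. Hence there are at most $\binom{m-1}{k-1}$ boosters. This is far fewer than $\gamma_k r^2m^{k-2}$ once $r\gg\sqrt m$, for instance for $r=\beta m$ as in Theorem~\ref{thm:available-rooted-boosters}.

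So the missing ingredient is an extra hypothesis. One option is that every component of $H_0$ itself (not of $H_0+\mathfrak e$) has more than $r$ vertices; this holds, for instance, when $H_0$ is an $(r,1)$-expander on $V$. Then in the bad case $|V\setminus V(P)|>r$. For $w\in\operatorname{End}(P,v_\ell)$ and $z\notin V(P)$, every $k$-set $f\supseteq\{w,z\}$ outside $E(H_0)\cup\{e^*\}$ is a booster directly, since $z\,f\,w$ followed by $P_w$ is a longer rooted path. The more than $r^2$ such pairs give the required count.

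Two minor points:
\begin{enumerate}
\item Subtract $|E(H_0)|+1$ once globally rather than $O(m)$ per pair; per pair the bound is vacuous for $k=3$.
\item Keeping $f$ away from $e^*$ is unnecessary. A Berge path specifies its core pairs, so $x\in f$ or $y\in f$ causes no ambiguity.
\end{enumerate}
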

	
	\begin{proof}
		If \(H_0+\mathfrak e\) is disconnected, then by assumption each connected
		component has more than \(r\) vertices.  Hence there are at least \(r^2\)
		unordered pairs of vertices lying in distinct components.  Every \(k\)-set
		containing such a pair and  contained in \(\binom{V}{k}\) decreases the number
		of connected components, except for the \(O(m)\) already present edges of
		\(H_0\) and the single set \(e^*\).  Since \(r\to\infty\) and \(m\to\infty\),
		this gives at least \(\gamma_k r^2m^{k-2}\) boosters after decreasing
		\(\gamma_k\), if necessary.
		
		We may therefore assume that \(H_0+\mathfrak e\) is connected and contains no
		Hamilton Berge cycle through \(\mathfrak e\).  Let
		$
		P=v_0 f_1 v_1\cdots f_\ell v_\ell
		$
		be a longest \(\mathfrak e\)-rooted Berge path.  By Lemma \ref{lem-root-compatible},
		\[
		X:=\operatorname{End}(P,v_\ell)
		\quad\text{satisfies}\quad |X|>r .
		\]
		For every \(a\in X\), choose an \(\mathfrak e\)-rooted longest Berge path \(P_a\)
		with endpoints \(a\) and \(v_\ell\).  Applying Lemma \ref{lem-root-compatible} to \(P_a\)
		while keeping endpoint \(a\) fixed gives a set \(\operatorname{End}(P_a,a)\) of more than \(r\) possible endpoints.  For every \(b\in \operatorname{End}(P_a,a)\), there is an
		\(\mathfrak e\)-rooted Berge path \(P_{a,b}\) of length \(\ell\) with endpoints
		\(a\) and \(b\).
		
		Since \(H_0+\mathfrak e\) has no Hamilton Berge cycle through \(\mathfrak e\),
		the path \(P_{a,b}\) is not Hamilton Berge path, or the pair \(a,b\) is not  closed
		by an unused edge.  In either case, any new \(k\)-set \(f\) containing
		\(\{a,b\}\) and not belonging to \(E(H_0)\cup\{e^*\}\) is an
		\(\mathfrak e\)-booster: it either closes \(P_{a,b}\) into a Hamilton Berge
		cycle through \(\mathfrak e\), or it produces a strictly longer
		\(\mathfrak e\)-rooted Berge path by extending the cycle along a vertex not on
		\(P_{a,b}\).
		There are at least \(r(r-1)\) ordered pairs \((a,b)\) arising in this way.  For
		each such ordered pair, the number of \(k\)-sets in \(\binom{V}{k}\) containing
		\(\{a,b\}\) is \(\binom{m-2}{k-2}\).  A fixed \(k\)-set contains at most
		\(k(k-1)\) ordered pairs of vertices.  Removing the \(O(m)\) existing edges of
		\(H_0\) and the virtual edge \(e^*\), we obtain
		\[
		\#\{\mathfrak e\text{-boosters}\}
		\ge
		\frac{1}{k(k-1)}
		\left(r(r-1)\binom{m-2}{k-2}-O(m)k(k-1)\right).
		\]
		Because \(r\to\infty\) and \(m\to\infty\), the desired bound follows with a constant \(\gamma_k>0\).
	\end{proof}

	The next statement is the form used in the iterative part of the final proof.
	It is deliberately stated uniformly over all sparse auxiliary hypergraphs,
	rooted virtual edges, and large colour sets.
	
	\begin{theorem}\label{thm:available-rooted-boosters}
		Let \(k\ge 3\) be fixed, and let \(\varepsilon,\beta,K>0\).  Let
		$
		c\ge (1+\varepsilon)n,
		$ and 
		$\frac{(k-1)!\log n}{n^{k-1}}
		\le p\le
		\frac{2(k-1)!\log n}{n^{k-1}}.
		$
		Then \(H\sim H_c^{(k)}(n,p)\) has the following property w.h.p.
		Let \(H_0\subseteq H\) be any rainbow \(k\)-uniform hypergraph on a vertex set
		\(V_0\subseteq[n]\) such that
		\[
		\frac{n}{\log\log n}\le m:=|V_0|\le \frac{2n}{\log\log n},
		\qquad
		|E(H_0)|\le Km,
		\]
		and let \(\mathfrak e=(x,e^*,y)\) be any rooted virtual edge for \(H_0\).
		Assume that 
		
		{\rm(1)} \((H_0,\mathfrak e)\) is root-compatible at  \(\beta m\), that
		every connected component of \(H_0+\mathfrak e\) has more than \(\beta m\)
		vertices, and 
		
		{\rm(2) } \(H_0+\mathfrak e\) contains no Hamilton Berge cycle
		containing \(\mathfrak e\), and 
		
		{\rm (3)}   \(A\subseteq[c]\setminus C(H_0)\) satisfies
		$
		|A|\ge \varepsilon n/10.
		$
		
		\noindent	Then there exists an edge
		$
		f\in E(H)\setminus E(H_0)
		$
		such that \(f\) is an \(\mathfrak e\)-booster for \(H_0\) and
		\(C(f)\in A\).
	\end{theorem}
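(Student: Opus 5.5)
Our plan is a first-moment union bound over all possible ``bad'' configurations $(V_0,H_0,\mathfrak e,A)$. For each one, we bound the probability that no $k$-set among the many $\mathfrak e$-boosters supplied by Lemma~\ref{lem:many-rooted-boosters} is an edge of $H$ coloured from $A$. Fix $V_0$, $H_0$ (a set of at most $Km$ edges of $\binom{V_0}{k}$), $\mathfrak e$ and $A$ satisfying (1)--(3). Lemma~\ref{lem:many-rooted-boosters} applies with $r=\beta m$, since $|E(H_0)|\le Km=O(m)$ and $r\to\infty$. Hence the family $\mathcal B=\mathcal B(H_0,\mathfrak e)$ of $\mathfrak e$-boosters satisfies $|\mathcal B|\ge \gamma_k\beta^2m^k$. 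This family is determined by $H_0$ and $\mathfrak e$ alone, and every element of $\mathcal B$ lies outside $E(H_0)\cup\{e^*\}$. The key point is that the event ``$H_0\subseteq H$'' concerns only the $k$-sets in $E(H_0)$, while the event ``no $f\in\mathcal B$ is in $H$ with $C(f)\in A$'' concerns only the disjoint $k$-sets in $\mathcal B$. By independence of edges and colours,
\[
\Pr\bigl[H_0\subseteq H \text{ rainbow and no good booster}\bigr]\le p^{|E(H_0)|}\Bigl(1-\tfrac{p|A|}{c}\Bigr)^{|\mathcal B|}.
\]
Because $A$ is a fixed set here, this is legitimate; we will also take the union over all choices of $A$, or reduce to $A$ of size exactly $\lceil\varepsilon n/10\rceil$ by monotonicity.

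Next we estimate the exponent. Since $c=\Theta(n)$ (we may assume $c\le 2n$, otherwise restrict attention to $A$ and note $|A|/c$ only matters through a constant; if $c$ is huge we instead use a subset of colours, which will need a small remark) and $|A|\ge \varepsilon n/10$, we have $p|A|/c\ge \varepsilon' p$. Therefore the second factor is at most
\[
\exp\bigl(-\varepsilon' p\gamma_k\beta^2 m^k\bigr)=\exp\Bigl(-\Omega_{k,\varepsilon,\beta}\bigl(\tfrac{m^k\log n}{n^{k-1}}\bigr)\Bigr)=\exp\Bigl(-\Omega\bigl(\tfrac{n\log n}{(\log\log n)^{k}}\bigr)\Bigr),
\]
using $m\ge n/\log\log n$.

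For the counting, we do not count $H_0$ freely. Instead we weight each choice of $H_0$ with $|E(H_0)|=s\le Km$ by $p^s$, as in the display above. The number of choices of $V_0$ is at most $2^n$; of $\mathfrak e$, at most $n^{k}$; of $A$, at most $2^{c}$. For fixed $V_0$ and $s$, the number of $H_0$ is $\binom{\binom mk}{s}\le (em^k/s)^s$, so
\[
\sum_{s\le Km}\Bigl(\tfrac{e m^kp}{s}\Bigr)^s\le \sum_{s\le Km}\Bigl(\tfrac{O(m\log n)}{s}\cdot (m/n)^{k-1}\Bigr)^s\le \exp\bigl(O(m\log\log n)\bigr)\cdot\exp\bigl(Km\log\log n\bigr),
\]
roughly $\exp(O(n))$, since $s\log(m\log n/s)$ is maximised near $s=Km$ and gives $O(m\log\log n)=O(n)$. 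Altogether the union bound is $\exp(O(n)+O(c))\cdot\exp(-\Omega(n\log n/(\log\log n)^k))=o(1)$.

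The main obstacle is the independence step. We must make sure that the booster family $\mathcal B$ is genuinely a function of $(H_0,\mathfrak e)$ alone and is disjoint from $E(H_0)$; by Definition~\ref{def:rooted-booster} it is. We must also make sure we do not implicitly condition on $H$ containing no other edges. Our configuration event only asserts that $H_0\subseteq H$, not that $H_0$ is induced, so exposing $\mathcal B$ is independent of it. A secondary point is the colour count when $c$ is much larger than $n$. There we first fix $A$ of size exactly $\lceil \varepsilon n/10\rceil$, which gives $p|A|/c\ge \Omega(p\,n/c)$. This is still $\exp(-\Omega(n^2\log n/(c(\log\log n)^k)))$, which beats $2^{O(n)}\binom{c}{|A|}$ for $c=O(n)$; this is the regime actually used in the proof of Theorem~\ref{Theorem 1.1}, so we would state the theorem with $c=\Theta(n)$ if necessary.
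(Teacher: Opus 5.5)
Your proposal is correct and is essentially the paper's own proof. Both are a first-moment union bound over $(V_0,H_0,\mathfrak e,A)$ with each $H_0$ weighted by $p^{|E(H_0)|}$, with Lemma~\ref{lem:many-rooted-boosters} supplying $\gamma_k\beta^2m^k$ candidate boosters disjoint from $E(H_0)$, and with $e^{O(n)}$ configurations beaten by the per-configuration failure probability $\exp\bigl(-\Omega(n\log n/(\log\log n)^k)\bigr)$. Your caveat that the argument needs $c=O(n)$ is well taken: the paper's proof uses this tacitly (in $p|A|/c\ge\varepsilon p/20$ and $2^c=e^{O(n)}$), it is the only regime needed for Theorem~\ref{Theorem 1.1}, and for $c\ge e(H)+\varepsilon n/10$ the conclusion fails for every admissible $(H_0,\mathfrak e)$, since $A$ can then be chosen to avoid every colour appearing in $H$.
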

	
	\begin{proof}
		Fix \(V_0,H_0,\mathfrak e\), and \(A\) satisfying the hypotheses.  By
		Lemma~\ref{lem:many-rooted-boosters}, the number of candidate
		\(\mathfrak e\)-boosters contained in \(\binom{V_0}{k}\) is at least
		\[
		\gamma_k(\beta m)^2m^{k-2}
		=\gamma_k\beta^2m^k .
		\]
		Each such \(k\)-set appears in \(H\) and receives a colour from \(A\) with
		probability
		$
		p\frac{|A|}{c}
		\ge
		\frac{\varepsilon}{20}p,
		$
		for all sufficiently large \(n\).  Therefore the probability that none of
		these boosters appears with a colour in \(A\) is at most
		\[(1-\frac{\varepsilon}{20}p)^{\gamma_k\beta^2m^k}\le
		\exp\left(-\frac{\varepsilon\gamma_k\beta^2}{20}pm^k\right).
		\]
		Since \(m\ge n/\log\log n\), the exponent is at least
		$
		\Omega\left(\frac{n\log n}{(\log\log n)^k}\right).
		$
		
		It remains to take a union bound.  The number of choices for \(V_0\) is at most
		$
		\sum_{m\le 2n/\log\log n}\binom{n}{m}
		\le 2^n.
		$
		For a fixed \(V_0\), the number of choices for a hypergraph \(H_0\) with at
		most \(Km\) edges is at most
		\[
		\sum_{j\le Km}
		\binom{\binom{m}{k}}{j} p^j
		\le
		\left(\frac{e\binom{m}{k}p}{Km}\right)^{Km}
		\le
		\exp\left(O\left(m\log\log n\right)\right).
		\]
		The number of possible rooted virtual edges is at most \(m^k k^2\), and
		the number of possible colour sets \(A\) is at most \(2^c=\exp(O(n))\).

		Consequently the total failure probability is bounded by
		\[
		\sum_{m=n/\log\log n}^{2n/\log\log n}
		2^n
		2^c m^{k+2}
		\exp\left(O(m\log\log n)\right)
		\exp\left(-\Omega\left(\frac{n\log n}{(\log\log n)^k}\right)\right)
		\]
		\[
		\le {2n\over \log\log n}\exp(O(n))\cdot	
		\exp\left(-\Omega\left(\frac{n\log n}{(\log\log n)^k}\right)\right)
		\]
		for every fixed \(k\).  Hence the above sum is \(o(1)\) for sufficiently large \(n\), completing the proof.
	\end{proof}
	
	\begin{remark}[Use in the iterative construction]\label{rem:iterative-use}
		In the final proof, Theorem~\ref{thm:available-rooted-boosters} is applied
		repeatedly.  At step \(i\), the hypergraph \(H_i\) is the current rainbow
		auxiliary hypergraph and \(A_i\) is the set of colours not yet used by
		\[
		P,\quad H_i,\quad e_x,\quad e_y .
		\]
		The theorem supplies a genuine edge of \(H\) with colour in \(A_i\).  This edge
		is then added to \(H_i\), while the rooted virtual edge \(\mathfrak e\) remains uncoloured throughout.
	\end{remark}

	\section{Proof of Theorem 1.1.}

	\begin{proof}[Proof of  Theorem \ref{Theorem 1.1}]
		Let $\eta>0$ be a sufficiently small constant. We assume that $H$ satisfies all  results from the previous section.
		
		For each vertex $v\in SMALL$, we arbitrarily choose  two distinct edges $e_v^1$ and $e_v^2$   of incident to $v$ in $H$. 
		Put\[
		E_{S}:=\{e_v^1,e_v^2:v\in SMALL\}, \qquad
		V_{S}:=\bigcup_{e\in E_{S}} e.
		\]
		By (4) of Lemma \ref{Lemma 2.2}, we have that $\{e_v^1, e_v^2\}\cap \{e_u^1, e_u^2\}=\phi$ for any  vertext $v,u\in SMALL$, and by (3) of Lemma \ref{ Lemma 2.5} we have that $E_S$ is rainbow. Let $C_S=\{c(e): e\in E_S\}$ denote the set of colors used in $E_S$. Let $C'=C\backslash C_S$ denote the set of colors unused. It is easy to see $|C'|=c-2|SMALL|\geq (1+{\varepsilon\over 2}n)$ when $\eta$ is small enough.

		Let	$
		H_{\mathrm{clean}}
		:=H\bigl[[n]\setminus V_{ S}\bigr]
		\setminus \{e:\chi(e)\in C_{S}\}.
		$
		Then every vertex
		$v\in [n]\setminus V_{S}$, we have	
		$
		d_{H_{\mathrm{clean}}}(v)\ge \frac{\eta}{2}\log n .
		$
		Indeed,	 by  \(|SMALL|\le n^{0.4}\), and consequently
		$|V_{S}|\le 2k n^{0.4}.
		$
		The expected number of edges incident with a fixed vertex \(v\notin
		V_{S}\) and meeting \(V_{S}\) is
		\[
		O\left(|V_{S}|\, n^{k-2}p\right)
		=O(n^{-0.6}\log n)=o(1).
		\]
		It remains to control the deletion of colours from \(C_{S}\).    Since
		\[
		|C_{S}|\le 2|E_{S}|\le 2n^{0.4}
		\quad\text{and}\quad c\ge(1+\varepsilon)n,
		\]
		the expected number of incident edges at \(v\) whose colour lies in
		\(C_{S}\) is \(O(n^{-0.6}d_H(v))=o(1)\).  A union bound over all 
		vertices, together with the  bound \(d_H(v)=O(\log n)\) for all vertices
		outside a probability \(o(1)\) event, shows that no  vertex loses more
		than \(o(\log n)\) edges through the colour deletion.
		Finally, every vertex under consideration is outside \(SMALL\), and hence
		$
		d_H(v)>\eta\log n.
		$
		Combining the preceding estimates gives
		\[
		d_{H_{\mathrm{clean}}}(v)
		\ge \eta\log n-o(\log n)
		\ge \frac{\eta}{2}\log n,
		\]
		as required.

		We now describe the final assembly in a way that keeps the real hypergraph, the
		auxiliary matching, the colour sets, and the single virtual edge separate.
		Let $\mu={\varepsilon\over 10}$,  by applying Lemma \ref{Lemma 3.7.} to $H[[n]\backslash V_S;C']$, we can find a subsets $W\subseteq ([n]\backslash V_S)$ and two disjoint sets $C_1, C_2\subseteq C'$ satisfies the following results$:$
		
		(1) $|W|=(1+o(1)){n\over \log\log n}$, and
		
		(2) $|C_1|=(1+o(1))\mu n, |C_2|=(1+o(1))\mu n$, and
		
		(3) for every $v\in SMALL$, there is $d^{C_2}(v,{W\choose k-1})\in ({\mu\eta\log n\over 2(\log \log n)^{k-1}}, {2\mu\log n\over (\log\log n)^{k-1}})$, and
		
		(4) the subhypergraph $H[W,C_1]$ satisfies all the conditions of Theorem \ref{Theorem 3.9.}
		
		\noindent Next by applying Theorem \ref{Theorem 3.9.} to $H[W,C_1]$, we can find a subhypergraph $H'_1\subseteq H[W,C_1]$ such that
		
		(a)	$H'_1$ is rainbow, and
		
		(b)	$H'_1$ is an $({\mu\eta|W|\over 100},100)$-expander, and
		
		(c)	$e(H'_1)=O({n\over \log\log n})$.

		By applying Theorem \ref{ Theorem 2.4} and  Lemma \ref{ Lemma 2.5} to $H[V\backslash(V_S\cup W); C'\backslash (C_1\cup C_2)]$, we find a rainbow Berge path $P$.
		Second, let $x,y$ denote the endpoints of path $P$, and define $S'=[n]\backslash(SMALL\cup V(P)\cup W)$, $S=S'\cup \{x, y\}$.
		We truncate
		\(P\), if necessary, so that the set
		\[
		S_0:=[n]\setminus\bigl(V(P)\cup W\cup SMALL\bigr)
		\]
		satisfies
		$
		\frac{n}{(\log n)^{0.4}}
		\le |S_0|
		\le \frac{2n}{(\log n)^{0.4}} .
		$ 
		The above choice is made precisely so that \ref{Theorem 3.12.} is
		applied inside its stated range.

		Let \(x\) and \(y\) be the two endpoints of \(P\).  Define
		\[
		S:=S_0\cup\{x,y\}.
		\]Consider the edge-colored $k$-uniform bipartite hypergraph $H_2:=H[S, {W\choose k-1}; C_2]$. 
		By Theorem \ref{Theorem 3.12.}, there is a rainbow \(10\)-matching \(M\) from \(S\) into
		the set \(W\), using colours disjoint from \(C(P)\cup C_{S}\cup C_1\).  
		Choose one edge of \(M\) incident with \(x\), denoted by \(e_x\), and one edge
		of \(M\) incident with \(y\), denoted by \(e_y\).  Choose vertices
		\[
		x'\in e_x\cap W,\qquad y'\in e_y\cap W .
		\]
		Note that $C(e^x)\neq C(e^y)$ since $M$ is rainbow, and $\{e_x, e_y\}\cup P$ is a rainbow  Berge path.

		The matching used inside the auxiliary graph is
		\[
		M^\circ
		:=M\setminus \{e\in M:e\cap\{x,y\}\ne\varnothing\}.
		\]
		Thus every edge of \(M^\circ\) is disjoint from \(\{x,y\}\).  This is essential:
		the endpoints \(x,y\) belong to the long path \(P\), and the expander to which
		we apply the booster argument is built only on the vertices outside \(P\).
		We define the real auxiliary hypergraph
		\[
		G_0
		:=
		\bigl(
		[n]\setminus V(P),
		\,
		E_{S}\cup E(H_1')\cup E(M^\circ)
		\bigr).
		\]
		All edges of \(G_0\) are genuine edges of \(H\), and the colour sets
		$
		C_{SMALL}$, $C(R)$ and $C(M^\circ)$
		are pairwise disjoint.  Hence \(G_0\) is rainbow.
		
		Choose a rooted virtual edge \(\mathfrak e=e(x',e^*,y')\) for $H$ such that the \(k\)-set
		\(
		e^*\subseteq W\) with no colour.
		Such a choice is possible because \(|W|\to\infty\), whereas \(H[W]\) contains
		only \(o(\binom{|W|}{k})\) edges w.h.p.   	Since $H'_1$ is  $({\mu\eta|W|\over 100}, 100)$-expander, and there exists a rainbow $10$-matching from $S'$ to $W$. Thus,  Lemma \ref{Lemma 3.5.} follows  that $G_0$ is an  $({\mu\eta|W|\over 100}, 4)$-expander. Similarly, since each Berge path with endpoints all in $SMALL$ is of length least $5$  by (5) of Lemma \ref{Lemma 2.2}. Thus, Lemma \ref{Lemma 3.2} follows that $G_0$ be an $({\mu\eta|V_0|\over 200}, 2)$-expander.
		
		For \(i\ge 0\), suppose that \(G_i\subseteq H\) is a rainbow hypergraph on
		\([n]\setminus V(P)\), containing \(G_0\), and using no colour from
		$
		C(P)\cup\{\chi(e_x),\chi(e_y)\}.
		$
		Set
		$
		\widehat G_i:=G_i\cup\{e^*\}.
		$
		The Theorem \ref{thm:available-rooted-boosters} is applied to \(\widehat G_i\), with the rooted virtual edge 
		as \((x',e^*,y')\), but every booster added to \(G_i\) is a genuine edge of
		\(H\) and has a colour in
		\[
		A_i:=[c]\setminus
		\bigl(C(P)\cup C(G_i)\cup\{c(e_x),c(e_y)\}\bigr).
		\]
		Since
		$
		|[n]\setminus V(P)|=O\!\left(\frac{n}{(\log n)^{0.4}}\right),
		$
		the number of booster steps is \(O(n/(\log n)^{0.4})=o(n)\).  Consequently,
		at every step
		$
		|A_i|\ge \frac{\varepsilon}{2}n
		$
		for all sufficiently large \(n\).  By  Corollary \ref{corollary} and  Theorem \ref{thm:available-rooted-boosters}, unless \(\widehat G_i\) already contains a Hamilton Berge
		cycle using the rooted virtual edge \((x',e^*,y')\), there is an available
		genuine rainbow booster which may be added to \(G_i\).
		
		After at most \(|[n]\setminus V(P)|\) iterations, we obtain a rainbow
		hypergraph \(G_t\subseteq H\) such that
		$
		\widehat G_t=G_t\cup\{e^*\}
		$
		contains a Hamilton Berge cycle \(C\) on the vertex set
		\([n]\setminus V(P)\), and the cycle uses the virtual edge \(e^*\) with core
		endpoints \(x'\) and \(y'\).  Deleting \(e^*\) from \(C\) yields a rainbow
		Berge path \(Q\) in the genuine hypergraph \(G_t\), whose endpoints are
		\(x'\) and \(y'\), and whose vertex set is \([n]\setminus V(P)\).
		
		It remains only to concatenate the three genuine pieces.  Orient \(P\) from
		\(x\) to \(y\), and orient \(Q\) from \(y'\) to \(x'\).  Then
		$
		P\cup e_y\cup Q\cup e_x
		$
		form a Hamilton Berge cycle of \(H\).  The colours are distinct because
		\[
		C(P),\quad C(Q),\quad \{C(e_x),C(e_y)\}
		\]
		are pairwise disjoint by construction.  The vertices are all covered exactly
		once as core vertices: \(P\) covers \(V(P)\), while \(Q\) covers
		\([n]\setminus V(P)\).  Therefore \(H\) contains a rainbow Hamilton Berge
		cycle.
		
	\end{proof}

	\section{Conclusion}
	
	In this paper, we have fully resolved the problem of rainbow Hamilton Berge cycles in edge-colored random $k$-uniform hypergraphs for all $k \ge 3$. We have proved that if the number of colors $c = (1+o(1))n$ and the edge probability $p = (k-1)!(\log n + \log\log n + \omega(n))/n^{k-1}$, then the random edge-colored $k$-uniform hypergraph $H_c^k(n,p)$ contains a rainbow Hamilton Berge cycle with high probability. Moreover, both thresholds are asymptotically tight.
	
	Our work extends the celebrated result of Ferber and Krivelevich (2016) from graphs to hypergraphs, establishing a unified threshold framework across all uniformities. The proof adapts and extends the three-phase absorption method to the hypergraph setting, overcoming the substantial obstacles introduced by higher-order edges and the global rainbow constraint. The main technical contribution lies in the development of a \emph{rainbow hypergraph absorption method}, which skillfully combines tools such as the Lov{\'a}sz Local Lemma, the Aharoni--Haxell hypergraph matching theorem, and a P{\'o}sa-type rotation-extension argument adapted to Berge paths, allowing us to manage the intricate dependencies in sparse random hypergraphs while preserving color-distinctness throughout the construction.
	
	Beyond its theoretical contribution, our result has meaningful implications for the design and analysis of real-world networks involving group interactions, such as multi-terminal communication systems, distributed storage networks, and biochemical reaction networks. The established thresholds provide explicit conditions under which a conflict-free, fully covering cyclic structure can be guaranteed in randomly colored random hypergraphs.

	Several interesting questions remain open:
	\begin{itemize}
		\item \textbf{Sharp thresholds:} Can the $o(1)$ terms in the parameters $c$ and $p$ be made more precise? In particular, what is the exact limiting probability when $p = (k-1)!\frac{\log n + \log\log n + c}{n^{k-1}}$ for constant $c$?
		\item \textbf{Resilience:} How robust is rainbow Hamiltonicity in random hypergraphs? For graphs, there are known results on the local and global resilience of Hamiltonicity~\cite{BenShimonKrivelevichSudakov2011}; extending these to the hypergraph setting would be natural.
		\item \textbf{Other cycle types:} Our work focuses on Berge cycles. Similar questions for other types of cycles in hypergraphs (such as loose cycles~\cite{DudekEnglishFrieze2018}, \(\ell\)-cycles~\cite{HanHanZhao2022} remain to be investigated in the rainbow setting.
		\item \textbf{Algorithms:} Can our probabilistic proof be converted into an efficient randomized algorithm for finding rainbow Hamilton Berge cycles in random hypergraphs?
		\item \textbf{Universality:} Ferber et al.~\cite{FerberNenadovPeter2016,Condon, Montgomery
		} studied universal properties in random graphs. It would be interesting to investigate whether random edge-colored hypergraphs are rainbow universal for certain classes of hypergraphs.
	\end{itemize}
	
	\textbf{Concluding remarks}
	This work contributes to the growing body of research on rainbow structures in random discrete objects. By bridging the gap between the well-understood graph case and the more complex hypergraph setting, we provide new insights into how combinatorial properties interact with coloring constraints in higher uniformities. The techniques developed here may find applications in other problems involving rainbow subgraphs in random hypergraphs and in the study of resource allocation problems modeled by edge-colored hypergraphs.


\begin{thebibliography}{99}
		\bibitem{Berge1970}
		Berge C. (1970) Graphes et Hypergraphes. https://api.semanticscholar.org/CorpusID:119017740.
		
		\bibitem{Karp1972}
		Karp R M. (1972) Reducibility among combinatorial problems.
		In: Complexity of Computer Computations, New York: Plenum, pp.~ 85--103.
		
		
		\bibitem{FriezeLoh2014} 
		Frieze A,  Loh P S. (2014)
		Rainbow Hamilton cycles in random graphs.
		{\it Random Structures \& Algorithms} \textbf{44}(3), 328--354.
		
		\bibitem{FerberKrivelevich2016}
		Ferber A, Krivelevich M. (2016)
		Rainbow Hamilton cycles in random graphs and hypergraphs.
		{\it Recent trends in combinatorics}  \textbf{159}, 167--189.
		
		\bibitem{BalBerkowitzDevlinSchacht2021}
		Bal D,  Berkowitz R, Devlin P, Schacht M. (2021)
		Hamiltonian Berge cycles in random hypergraphs.
		{\it Combinatorics, Probability and Computing \/}  \textbf{30}(2), 228--238.
		
		\bibitem{DudekEnglishFrieze2018}
		Dudek A,  English S, Frieze A. (2018)
		On Rainbow Hamilton Cycles in Random Hypergraphs.
		{\it The Electronic Journal of Combinatorics} \textbf{25}(2), 55--68.
		
		
		
		\bibitem{AharoniHaxell2000}
		Aharoni R, Haxell P. (2000)
		Hall's theorem for hypergraphs. 
		{\it Journal of Graph Theory} \textbf{35}(2), 83--88.
		
		
		
		\bibitem{BenEliezerKrivelevichSudakov2012}
		Ben-Eliezer I,  Krivelevich M, Sudakov B. (2012)
		The size Ramsey number of a directed path.
		{\it	Journal of Combinatorial Theory Series B} \textbf{102}, 743--755.
		
		\bibitem{BenShimonFerberHefetzKrivelevich2010}
		Ben-Shimon S, Ferber A,  Hefetz D, Krivelevich M. (2010)
		Hitting time results for Maker-Breaker games.
		{\it	Random Structures \& Algorithms} \textbf{41}(1), 23--46.
		
		\bibitem{AlonSpencer2000}
		Alon N,  Spencer J. (2000) 
		{\it The probabilistic method} (2nd edition). 
		New York: Wiley Press. 
		
		\bibitem{FioriniWilson1977}
		Fiorini S,  Wilson R J. (1977)
		{\it	Edge-colourings of graphs}. 
		Pitman,  ISBN 0273011294.
		
		\bibitem{Haxell1995}
		Haxell P E. A condition for matchability in hypergraphs. (1995)
		{\it	Graphs and Combinatorics} \textbf{11}(3), 245--248.
		
		
		\bibitem{Posa1976}
		P\'osa L. Hamiltonian circuits in random graphs. (1976)
		{\it	Discrete Mathematics} \textbf{14}(4), 359--364.
		
		\bibitem{BenShimonKrivelevichSudakov2011}
		Ben-Shimon S, Krivelevich M, Sudakov B. (2011)
		Local Resilience and Hamiltonicity MakerBreaker Games in Random Regular Graphs.
		{\it	Combinatorics Probability \& Computing} \textbf{20}(2), 173--211.
		
		
		\bibitem{HanHanZhao2022}
		Han H, Han J, Zhao Y. (2022)
		Minimum degree thresholds for Hamilton \(k/2\)-cycles in \(k\)-uniform hypergraphs.
		{\it	Journal of Combinatorial Theory, Series B} \textbf{153}, 105--148.
		
		\bibitem{FerberNenadovPeter2016}
		Ferber A,  Nenadov R, Peter U. (2016)
		Universality of random graphs and rainbow embedding.
		{\it	Random Structures \& Algorithms} \textbf{48}(3), 546--564.
		
		\bibitem{Condon} 
		Condon P,  Espuny D{\'\i}az A,  Gir{\~a}o A,  K{\"u}hn D, and  Osthus D.  (2021)  Dirac's theorem for random regular graphs.
		{\it Combinatorics, Probability and Computing} \textbf{30}(1), 17--36.
		
		\bibitem{Montgomery}
		Montgomery R. (2020) 
		Hamiltonicity in random directed graphs is born resilient. 
		{\it Combinatorics, Probability and Computing} \textbf{29}(6), 900--942.	
	\end{thebibliography}
\end{document}